\title[Restriction estimates]{A restriction estimate for surfaces  with negative Gaussian curvatures}
\author{Shaoming Guo and Changkeun Oh}
\date{}
\def\R{\mathbb{R}}
\def\C{\mathbb{C}}\def\nint{\mathop{\diagup\kern-13.0pt\int}}
\def\Z{\mathbb{Z}}
\def\T{\mathbb{T}}
\def\CN{\mathcal{N}}
\def\D{\mathcal{D}}
\def\dt{\delta_{\mathrm{trans}}}
\def\dd{\delta_{\mathrm{deg}}}
\def\avg{L^2_{\mathrm{avg}}(\theta)}
\def\beq{\begin{equation}}
\def\endeq{\end{equation}}
\def\bg{\begin{gathered}}
\def\eg{\end{gathered}}
\def\br{\mathrm{Br}_{\alpha}}
\def\mc{\mathcal}
\def\lesim{\lesssim}
\newcommand{\Norm}[1]{ \left\|  #1 \right\| }
\numberwithin{equation}{section}
\theoremstyle{plain}
\newtheorem{thm}{Theorem}[section]
\newtheorem{prop}[thm]{Proposition}
\newtheorem{lem}[thm]{Lemma}
\newtheorem*{conj*}{Conjecture}
\newtheorem*{openproblem*}{Open Problem}
\begin{document}
\maketitle
\begin{abstract}
     We prove $L^p$ bounds for the Fourier extension operators associated to smooth surfaces in $\R^3$
    with negative Gaussian curvatures for every $p>3.25$.
\end{abstract}

\section{Introduction}
For every hypersurface $S$ in $\R^3$ parametrized by
\beq\label{eq_surface_parametrized}
S=\{(\xi,\eta,h(\xi,\eta): (\xi,\eta) \in [-1,1]^2 \},
\endeq
we define an extension operator associated to the surface $S$ by
\begin{equation}\label{extensionoperator}
E_Sf(x_1,x_2,x_3)=\int_{[-1,1]^2}f(\xi,\eta)
e\big(\xi x_1+\eta x_2+h(\xi,\eta)x_3 \big)\,d\xi d\eta.
\end{equation}
Here, $h(\xi,\eta)$ is a smooth function
and $e(t)=e^{2\pi it}$. 
One formulation of the restriction problem, introduced by Stein in the 1960s, is to find an optimal range of $(p, q)$ satisfying the following Fourier extension estimate
\beq\label{extensionestimate}
\|E_Sf\|_{L^{p}(\R^3)} \leq C(S,p,q) \|f\|_{L^{q}([-1,1]^2)}.
\endeq
It is reasonable to expect that the range of $(p, q)$ depends on the choice of the smooth function $h$.
Stein (see for instance page 345 of \cite{MR2827930}) conjectured
that 
the extension estimate \eqref{extensionestimate} holds true if and only if $p>3$ and $p \ge 2q'$ with $p':=p/(p-1)$, provided that the determinant of the Hessian matrix of $h$ is non-vanishing, that is, 
\beq\label{hessian}
\det \begin{pmatrix}
\frac{\partial^2{h}}{\partial \xi^2} & \frac{\partial^2{h}}{\partial \xi
\partial \eta} 
\\[0.50em]
\frac{\partial^2{h}}{\partial \xi \partial \eta} & \frac{\partial^2{h}}{\partial \eta^2}
\end{pmatrix}\neq 0
\endeq
at every point on $[-1, 1]^2$. 
We refer to \cite{MR2087245,MR3967448} for historical backgrounds and applications of the restriction estimates.\\

The condition \eqref{hessian} is equivalent to the condition that the Gaussian curvature of the surface $S$ 
is non-vanishing everywhere.
As the determinant of the Hessian matrix is continuous, once the condition \eqref{hessian} is satisfied, the sign of the determinant does not change as the point changes in $[-1, 1]^2$. 

For the case that the sign is positive, there has been a number of significant progress over the past twenty years. Tao, Vargas and Vega \cite{MR1625056}  proved \eqref{extensionestimate} for $p\ge 4-5/27$ and the optimal range of $q$, and for $p\ge 4-2/9$ and certain range of $q$. Indeed, they discovered that one can derive the linear restriction estimate by using certain bilinear restriction estimates. 
Tao \cite{MR2033842}  obtained \eqref{extensionestimate} for $p>10/3$ by proving certain sharp bilinear restriction estimates. Bourgain and Guth \cite{MR2860188} invented the method of broad-narrow analysis, which allows them to use the tri-linear restriction estimate due to Bennett, Carbery and Tao \cite{MR2275834}, and improve the range to $p>56/17$. Guth \cite{MR3454378} further developed the idea of broad-narrow analysis, successfully applied the polynomial method in the context of Fourier restriction, and pushed the range to $p>3.25$ (and $q=\infty$). Later, Shayya \cite{MR3694293} and Kim \cite{Kim2017SomeRO} refined the argument of Guth \cite{MR3454378} and improved his result to the range $p>3.25$ and $p\ge 2q'$. The most recent result is due to Wang \cite{Wang2018ARE}, who proved  \eqref{extensionestimate} for $p>3+3/13$. Her method is based on the polynomial method and the two ends argument that originated from the work of Wolff \cite{MR1836285}. For earlier results, we refer to Tao, Vargas and Vega \cite{MR1625056}, in particular, Table 1 on page 969 of their paper. 

Regarding the case of negative Gaussian curvatures, the case of the hyperbolic paraboloid $h(\xi, \eta)=\xi\eta$ has been very well studied. 
Lee \cite{MR2218987} and Vargas \cite{MR2106972} independently obtained \eqref{extensionestimate} for $p>10/3$ by using the bilinear method. Their result has been improved by Cho and Lee \cite{MR3653943} to the range $p>3.25$ and $q=\infty$  by using the polynomial method of Guth \cite{MR3454378}. Later, Kim \cite{Kim2017SomeRO} refined their argument, and improved their result to the range 
$p>3.25$ and $p>2q'$. More recently, Stovall \cite{MR3892401} 
obtained scale-invariant restriction estimates for $p>3.25$ (and $p=2q'$ which makes $(p, q)$ on the scaling line). 

However, for general surfaces with negative Gaussian curvatures, the problem starts to get more complicated. Buschenhenke, M\"{u}ller, and Vargas first studied the problem of restriction estimates for perturbed hyperbolic paraboloids in one variable, given by $h(\xi,\eta)=\xi\eta+g(\eta)$ for some smooth function $g$. A typical and model example is $g(\eta)=\eta^3$. They obtained the restriction estimate \eqref{extensionestimate} 
 for this typical example
in the range $p>10/3$ in \cite{MR3999679} and in the range $p>3.25$ in \cite{arXiv:2003.01619}; for functions $g$ that are of finite types in the range $p>10/3$ in \cite{Buschenhenke2019OnFR}; and for flat functions $g$ with $g'''$ monotone in the range $p>10/3$ in \cite{Buschenhenke2020PartitionsOF}.

In this paper, we obtain a restriction estimate in the range $p>3.25$ for every smooth surface with negative Gaussian curvatures. 
In particular, our result improves over the range $p\ge 4$ of Tomas \cite{MR358216} and Stein \cite{MR864375} for such surfaces, and no prior results seem to be known for two-variable perturbations of the hyperbolic paraboloid. 

\begin{thm}\label{theorem1.1}
Let $h(\xi, \eta)$ be a smooth function such that 
\beq\label{1.4}
\det \begin{pmatrix}
\frac{\partial^2{h}}{\partial \xi^2} & \frac{\partial^2{h}}{\partial \xi
\partial \eta} 
\\[0.50em]
\frac{\partial^2{h}}{\partial \xi \partial \eta} & \frac{\partial^2{h}}{\partial \eta^2} 
\end{pmatrix} < 0,
\endeq
at every point in $[-1, 1]^2$. 
Then for every $p>3.25$, $p>2q'$,
and every function $f:[-1,1]^2 \rightarrow \C$, it holds that 
\beq
\Norm{E_Sf}_{L^{p}(\R^3)}\le C_{h,p,q}\|f\|_{L^q([-1,1]^2)},
\endeq  
where $C_{h, p, q}$ is a constant depending on $h, p$ and $q$. 
\end{thm}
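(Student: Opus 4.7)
The plan is to implement the polynomial partitioning argument of Guth~\cite{MR3454378}, in the form refined by Kim~\cite{Kim2017SomeRO} and Shayya~\cite{MR3694293}, directly on the surface $S$. Before doing so I would perform a reduction to a normal form. Cover $[-1,1]^2$ by a finite family of small squares; on each square $\tau$ of diameter $\delta$, centred at a point $(\xi_0,\eta_0)$, apply an affine change of variables in the base $(\xi,\eta)$-plane together with a linear change of variables in the target $\R^3$ to bring the quadratic Taylor polynomial of $h$ at $(\xi_0,\eta_0)$ into the normal form $\xi\eta$. After rescaling $\tau$ to the unit square, the function $h$ becomes $\xi\eta+E(\xi,\eta)$, where $E$ is a higher-order smooth perturbation whose $C^k$-norm can be made as small as we like by choosing $\delta$ small. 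This reduces Theorem~\ref{theorem1.1} to proving \eqref{extensionestimate} uniformly for $h$ in a $C^\infty$-neighbourhood of the hyperbolic paraboloid.

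\textbf{Bilinear estimate and the partitioning induction.} The key input for the polynomial partitioning scheme is a bilinear restriction estimate for pairs of caps on $S$ that are transverse in an appropriate sense. For the exact hyperbolic paraboloid this is due to Lee~\cite{MR2218987} and Vargas~\cite{MR2106972}, and the wave packet argument underlying their proof is stable under small $C^k$ perturbations of the defining function, so it can be pushed through for $\xi\eta+E$ provided $E$ is small enough. Once this perturbative bilinear estimate is in place, Kim's cellular/algebraic/transverse decomposition runs on the perturbed surface exactly as on the hyperbolic paraboloid: the cellular case is handled by induction on the radius, the algebraic case by iterating inside a thin neighbourhood of a low-degree variety, and the transverse case by the bilinear estimate. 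Kim's two-ends argument, which is what produces the exponent $p>3.25$, exploits an approximate scaling symmetry of the hyperbolic paraboloid that survives the perturbation up to errors absorbable in the $\varepsilon$-loss at each stage of the induction.

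\textbf{Main obstacle.} The principal difficulty is that the asymptotic directions of $S$---the tangent directions along which the second fundamental form \eqref{1.4} vanishes---vary from point to point, whereas for the hyperbolic paraboloid they are globally the two coordinate axes. The bilinear estimate fails for pairs of caps whose angular separation is aligned with an asymptotic direction, so the correct transversality must be measured against the varying asymptotic foliation on $S$ rather than against fixed coordinate axes. This forces the decomposition in the normalisation step to align with this foliation rather than with a dyadic grid, and it means that within the induction one must repeatedly interleave affine renormalisations with foliation-adapted decompositions. Controlling the implicit constants through these non-standard decompositions, and verifying that the resulting bilinear loss still feeds into Kim's two-ends induction without degrading the final exponent below $p>3.25$, is the central technical challenge I would expect to occupy the bulk of the proof.
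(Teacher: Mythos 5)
The overall framework you identify---reduce to polynomials, run polynomial partitioning with a broad--narrow decomposition, feed a bilinear $L^4$ estimate into the wall case---is indeed the skeleton of the paper's argument, and you correctly locate the central geometric obstruction: the asymptotic directions of $S$ vary from point to point, so transversality must be measured against this varying structure. However, the proposal has two substantive gaps.

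First, the claim that the bilinear estimate and the induction scheme are ``stable under small $C^k$ perturbations of the defining function'' is precisely the point that fails. After the usual reduction to a polynomial $h=\xi\eta+\sum_{i\ge 2,j}a_{i,j}\xi^{i-j}\eta^j$ with small coefficients, the anisotropic rescalings used in the induction on scales can amplify the higher-order coefficients back to unit size; Buschenhenke--M\"uller--Vargas already observed this for $h=\xi\eta+\eta^3$, where the cubic term becomes dominant under the natural parabolic scaling and cannot be viewed as a perturbation. This is exactly why the problem is not perturbative, and it is the reason the paper needs structures your proposal does not anticipate. Because perturbations of every order $3,\dots,d$ can become relevant at some scale, the notion of broad point must enforce broadness simultaneously at $d+2$ scales $K_{d+1}\ll\cdots\ll K_0$ (see \eqref{0511e3.2} and \eqref{0507e3.3}); and the degenerate configurations are organized into ``bad lines'' and ``bad pairs'' whose number and distribution are controlled by Lemma~\ref{linecount} and Lemma~\ref{badpairlemma}. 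The $L^4$ argument in Proposition~\ref{L4argument} is then performed only on good pairs $(\tau,\tau')$, exploiting a dichotomy---either the variety $\eta_{\zeta,\zeta'}$ has curvature or the restriction $H_{\zeta,\zeta'}$ of $h$ along the variety has nonvanishing derivative---that goes beyond what ``foliation-adapted decompositions'' suggests. Without a quantitative structure theorem of this kind for the set where the $L^4$ estimate degenerates, the polynomial partitioning induction does not close.

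Second, a misattribution that affects the plan: the exponent $p>3.25$ comes from Guth's polynomial partitioning scheme in \cite{MR3454378}; the two-ends argument is Wang's \cite{Wang2018ARE} and yields $p>3+3/13$, while Kim \cite{Kim2017SomeRO} and Shayya \cite{MR3694293} refined Guth's method to extend the $q$-range at $p>3.25$. The paper does not import a two-ends argument, and doing so in the perturbed setting would be considerably harder than what is actually carried out.
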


Let us briefly explain why the case of perturbed hyperbolic paraboloid is more difficult than that of the hyperbolic paraboloid. In the latter case, we have two distinguished directions and we have clean scalings that preserve the surface, to be more precise, the surface $(\xi, \eta, \xi\eta)$ is preserved under the scaling $(\xi, \eta)\mapsto (a\xi, b\eta)$ for two arbitrary $a, b\in \R$. This scaling structure plays a crucial role in the previously mentioned works \cite{MR2106972, MR2218987, MR3653943, MR3892401}.  However, in the general case, such a scaling structure is no longer available. For instance, Buschenhenke, M\"uller and Vargas \cite{MR3999679} explained in their paper that under certain natural scalings, the term $\eta^3$ in their surface $(\xi, \eta, \xi\eta+\eta^3)$ could assume a dominant role (see the last equation in page 127 of their paper and the discussion after it) and can no longer be viewed as a perturbation. \\

Next we briefly mention the novelties of the paper in the setting where $h$ is a polynomial. We follow the framework of the polynomial methods of Guth \cite{MR3454378}, which is partly based on the method of induction on scales. The major difference comes from the ``wall" case (see equation \ref{0522e4.9} and the line below it), in particular, how the $L^4$ argument of Cordoba, Sj\"olin and Fefferman (see for instance Lemma 7.1 of Tao \cite{MR2033842}) is applied. When certain ``significant" frequency squares (see \eqref{0522e4.36}) form ``bad lines" or ``bad pairs" (see \eqref{0421e2.3} and Subsection \ref{0522subsection2.2}), we will not have good enough $L^4$ estimate. However, Lemma \ref{linecount} and Lemma \ref{badpairlemma}, which is one main ingredient of the paper, tell us how bad pairs and bad lines are distributed. Moreover, in contrast to the case of the hyperbolic paraboloid, bad lines are no longer either horizontal or vertical, but can point to a large number of directions. This will make the notion of ``broad functions" of Guth \cite{MR3454378} more complicated (see equation \eqref{0511e3.2} and \eqref{0507e3.3}). Moreover, as our polynomial $h(\xi, \eta)$ allows perturbations of all different orders up to the degree $d$, when running the method of induction on scales, we will have to pick as many scales $K_{d+1}\leq K_d \leq \dots \leq K_0$ (see \eqref{0522e2.56}) to take care of all the perturbations. Due to the appearance of these different scales, when defining broad functions, we will have to make sure that our function is broad at every scale. This will make the argument of passing from bilinear estimates to linear estimates more involved (see Lemma \ref{bourgainguth}). \\

{\bf Notation:} For two non-negative numbers $A_1$ and $A_2$, we write $A_1 \lesim A_2$ to mean that there exists a constant $C$, which possibly depends on $d$, such that $A_1 \le C A_2$. Similarly, we use $O(A_1)$ to denote a number whose absolute value is smaller than $CA_1$ for some constant $C$.  

For simplicity, we sometimes use $Eg$ for $E_Sg$ and use $p_0:=3.25$.
For a polynomial $h$, we use $\|h\|$ to denote the supremum of all its coefficients. For every set $A$ and a number $a>0$, we denote by $\mathcal{N}_a(A)$ the $a$-neighborhood of the set $A$. We denote by $1_A$ the characteristic function of $A$ and denote $f1_A$ by $f_A$.  For each square $\tau$ with side length $K^{-1}$, let $2\tau$ denote the square with the same center and the side length $2K^{-1}$. Let $\mathcal{F}$ denote the Fourier transform.
For every $K \geq 1$ and $A \subset [-1,1]^2$ we denote by $\mathcal{P}(K^{-1},A)$ the set of all dyadic squares with side length $K^{-1}$ in $A$. We sometimes use $\mathcal{P}(K^{-1})$ for $\mathcal{P}(K^{-1},[-1,1]^2)$.

 We will use the dyadic numbers $K_L,K_d,\ldots,K_1,K$ with
\begin{equation}
1 \leq K_{L}=:K_{d+1} \leq K_d \leq \cdots  \leq K_1 \leq K_0:= K \leq R.
\end{equation} 
These constants will be determined later. \\

\noindent Note. The authors were reported that Buschenhenke, M\"uller and Vargas \cite{BMV20d} obtained the same result. 

\subsection*{Acknowledgement.} The authors would like to thank Betsy Stovall for a number of insightful discussions. S.G.
was partially supported by NSF grant DMS-1800274.
%

\section{Reduction to polynomial surfaces}

In this section, we will see that in order to prove Theorem \ref{theorem1.1} for smooth functions $h$, it suffices to consider the cases where $h$ is a polynomial. We learned this technique from Section 7.2 of \cite{guth2019}. 

First of all, by Tao's epsilon removal lemma in \cite{MR1666558} (see also \cite{Kim2017SomeRO}), it suffices to prove the local restriction estimate: For every $\epsilon>0$, $p>3.25$ and $p>2q'$, we have 
\begin{equation}\label{200827e2.1}
\|E_S f\|_{L^{p}(B_R)} \le C_{\epsilon, h, p , q} R^{2\epsilon} \|f\|_{L^q([-1, 1]^2)},
\end{equation}
for every ball $B_R\subset \R^3$ of radius $R$. We partition $[-1, 1]^2$ into dyadic squares $\{\Box\}$ of side length $R^{-\epsilon}$. By the triangle inequality, it suffices to prove 
\begin{equation}\label{200827e2.2}
\|E_S f_{\Box}\|_{L^{p}(B_R)} \le C_{\epsilon, h, p , q} \|f_{\Box}\|_{L^q([-1, 1]^2)}.
\end{equation}
Without loss of generality, we assume that $\Box$ contains the origin. Moreover, we assume that $B_R$ is centered at the origin. Denote $W:=10/\epsilon$. Let $h_W$ denote the Taylor expansion of the smooth function $h$ about the origin up to the order $W$. Denote 
\begin{equation}
\Delta_W(\xi, \eta):=h(\xi, \eta)-h_W(\xi, \eta).
\end{equation}
It is easy to see that
\begin{equation}
|\Delta_W(\xi, \eta)|\le \|h\|_{C^{W+1}} R^{-10}, \text{ for every } (\xi, \eta)\in \Box. 
\end{equation} 
Define $S_W$ to be the surface given by $(\xi, \eta, h_W(\xi, \eta))$. By Taylor's expansion, we can write 
\begin{equation}
\begin{split}
E_S f_{\Box}(x) = \int e(x_1\xi+x_2\eta+x_3 h_W(\xi, \eta))\Big(\sum_{k=0}^{\infty} \frac{(ix_3 \Delta_W(\xi, \eta))^k}{k!}\Big)f_{\Box}(\xi, \eta)d\xi d\eta.
\end{split}
\end{equation}
By the triangle inequality, we further obtain 
\begin{equation}
\|E_S f_{\Box}\|_{L^p(B_R)}\le \|E_{S_W} f_{\Box}\|_{L^p(B_R)}+ \sum_{k=1}^{\infty}\frac{1}{k!} \Big( \|h\|_{C^{W+1}} R^{-9}\Big)^k\cdot R^6\|f_{\Box}\|_1. 
\end{equation}
As $\epsilon$ is fixed and $R$ can be chosen to be sufficiently large, depending on $\epsilon$ and $h$, we see immediately that it suffices to prove 
\begin{equation}\label{200827e2.2zz}
\|E_{S_W} f_{\Box}\|_{L^{p}(B_R)} \le C_{\epsilon, h, p , q} \|f_{\Box}\|_{L^q([-1, 1]^2)}.
\end{equation}
This finishes the reduction of the main theorem for smooth functions to that for polynomials. 

\medskip

In the end we make a further reduction by combining a standard scaling argument with the argument in \cite{MR3694293} (see also Theorem 5.3 in \cite{Kim2017SomeRO}). Theorem \ref{theorem1.1} will follow from Theorem \ref{theorem1.2}.
\begin{thm}\label{theorem1.2}
Let $h$  be given by the polynomial
\begin{equation}\label{0519e1.8}
h(\xi,\eta):=\xi\eta+a_{2,0}\xi^2+a_{2,2}\eta^2+
\sum_{i=3}^{d}\sum_{j=0}^{i} a_{i,j}\xi^{i-j}\eta^{j}.
\end{equation}
For sufficiently small $\epsilon_0>0$, the following holds true: Suppose that the polynomial $h$ satisfies the condition
\beq\label{maininequality}
|a_{2,0}|+|a_{2,2}|+100^d\sum_{i=3}^{d}\sum_{j=0}^{i}|a_{i,j}|\le \epsilon_0.
\endeq
Then for every $3/13<\lambda <1$, $\epsilon>0$, $R \geq 1$, ball $B_R$ of radius $R$, and function $f:[-1,1]^2 \rightarrow \C$, it holds that 
\beq\label{eq_desired_restriction_estimate}
\Norm{E_Sf}_{L^{3.25}(B_R)}\le C_{\epsilon,d,\lambda}R^{\epsilon}\|f\|_{L^2([-1,1]^2)}^{1-\lambda}
\|f\|_{L^{\infty}([-1,1]^2)}^{\lambda}.
\endeq
Here, the constant $C_{\epsilon,d,\lambda}$ is independent of the choice of $a_{i,j}$.
\end{thm}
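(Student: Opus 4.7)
The plan is to prove Theorem~\ref{theorem1.2} by induction on the radius $R$, running the polynomial partitioning framework of Guth \cite{MR3454378} together with a broad--narrow decomposition adapted to the multi-scale structure of the polynomial $h$. Assume inductively that the claimed estimate holds for every polynomial satisfying \eqref{maininequality} on balls of radius smaller than $R$. The factor $100^d$ in \eqref{maininequality} is built in precisely so that the parabolic rescalings used in the induction preserve the smallness hypothesis: under $(\xi,\eta)\mapsto (K^{-1}\xi,K^{-1}\eta)$ followed by the corresponding rescaling of $x_3$, the higher-order coefficients of \eqref{0519e1.8} pick up a factor $K^{-(i-2)}\le 1$ for $i\ge 3$, and the translation part of the rescaling produces Taylor losses absorbed by the factor $100^d$.

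At scale $R$, I would decompose $f=\sum_{\tau} f_{\tau}$ at each of the dyadic scales $K_0, K_1, \ldots, K_{d+1}$, corresponding to the successive orders of perturbation of $h$ away from the pure hyperbolic paraboloid $\xi\eta$. Call a point $x\in B_R$ narrow at scale $K_j$ if $E_S f(x)$ is essentially concentrated on a single pair of transverse $K_j^{-1}$-caps in the sense appropriate to negative curvature, and broad otherwise. At narrow points at scale $K_j$ one uses a parabolic rescaling: after rescaling the surface remains of the form \eqref{0519e1.8} with coefficients still obeying \eqref{maininequality}, and the inductive hypothesis on a ball of radius $R/K_j^2$ closes the case. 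The subtlety is that we must require ``broad'' at \emph{every} scale simultaneously, which forces the refined broad-function formulation suggested in \eqref{0511e3.2}--\eqref{0507e3.3}; passing from the resulting multilinear broad estimate back to the linear estimate will rely on a Bourgain--Guth-type reduction (Lemma~\ref{bourgainguth}).

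The broad part is then handled by polynomial partitioning: choose a polynomial $P$ of an appropriate degree $D$ so that $B_R\setminus Z(P)$ splits into $\sim D^3$ cells over which the $L^{p_0}$-mass of the broad part of $E_S f$ equidistributes. Inside each cell the inductive hypothesis applies to the $\lesssim D^2$ wave packets entering that cell. The substantive work is concentrated on the wall $\mathcal{N}_{R^{1/2+\delta}}(Z(P))$, where the geometric partition fails and one must instead invoke the $L^4$ argument of C\'ordoba--Sj\"olin--Fefferman (Lemma~7.1 of \cite{MR2033842}) applied to pairs $(\tau,\tau')$ of significant $K_j^{-1}$-caps on the wall. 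For transverse pairs, Plancherel together with a non-degenerate Jacobian calculation for $(\xi,\eta,\xi',\eta')\mapsto (\xi+\xi',\eta+\eta',h(\xi,\eta)+h(\xi',\eta'))$ yields the expected $L^4$-bound, and this is where the curvature hypothesis \eqref{1.4} enters.

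The main obstacle, and what I expect to carry the weight of the proof, is the analysis of the degenerate pairs in the wall. For the pure hyperbolic paraboloid the bad pairs are aligned only along horizontal and vertical lines, but for a general $h$ satisfying \eqref{1.4} the bad pairs lie along lines whose directions are governed by the local quadratic form of $h$ and therefore vary with both position and scale. The proposal is to prove two combinatorial lemmas: one (Lemma~\ref{linecount}) bounding the number of significant $K_j^{-1}$-caps that can lie on a single bad line, and another (Lemma~\ref{badpairlemma}) bounding the number of bad pairs any given cap can participate in. The smallness assumption \eqref{maininequality} is precisely what keeps the bad directions at the scales $K_0\gg\cdots\gg K_{d+1}$ clustered near the horizontal/vertical directions of the base paraboloid, so that the two counts remain summable across the $d+1$ scales used in \eqref{0522e2.56}. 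Finally, the $R^\epsilon$ loss is arranged by choosing the $K_j$ so that the total number of inductive iterations is $O(1/\epsilon)$, with each step losing at most $R^{\epsilon/(d+1)}$.
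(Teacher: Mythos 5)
Your proposal tracks the paper's architecture closely: induction on radius, a multi-scale broad--narrow decomposition at scales $K_0\gg\cdots\gg K_{d+1}$, polynomial partitioning into cells and a wall, C\'ordoba--Sj\"olin--Fefferman's $L^4$ argument on the wall for good pairs, and the two combinatorial lemmas (\ref{linecount} and \ref{badpairlemma}) to control the bad geometry. However, there is a genuine gap in your handling of the narrow case. You propose to close all narrow cases by ``parabolic rescaling,'' i.e.\ the isotropic map $(\xi,\eta)\mapsto(K^{-1}\xi,K^{-1}\eta)$. That works when the mass of $E_Sf$ concentrates on a single $K_j^{-1}$-square (Lemma~\ref{squarerescaling}), but the definition of broad in \eqref{0511e3.2}--\eqref{0507e3.3} also forces a narrow alternative where the mass concentrates on an intersection of bad strips: sets that have width $\sim K_L^{-1}$ but \emph{full length} $\sim 1$. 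Isotropic rescaling of such a set gains nothing in the long direction, so the resulting $L^2$--$L^\infty$ exponent gain is insufficient to close the induction. The paper instead uses the anisotropic (Lorentz-type) rescaling $(\xi,\eta)\mapsto(\xi,K_L\eta)$ in Lemma~\ref{striprescaling}, and the crucial point is that it is precisely the bad-line condition \eqref{0421e2.3} (smallness of the pure-$\xi$ coefficients $c_{i,0}$ after rotation to the line) that makes this anisotropic rescaling compatible with the smallness hypothesis \eqref{maininequality}, after a further subdivision into $O(1)$ small squares. Without this ingredient the fourth term in the decomposition \eqref{0507e3.7} cannot be absorbed.

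Two smaller imprecisions worth correcting. First, your phrasing ``narrow at scale $K_j$ if $E_S f(x)$ is essentially concentrated on a single pair of transverse $K_j^{-1}$-caps'' has the dichotomy backwards: narrow at scale $K_j$ means concentration on a \emph{single} $K_j^{-1}$-cap or on bad strips; two transverse caps forming a good pair is exactly the broad/bilinear situation. Second, the paper's broad-function estimate (Theorem~\ref{broadfunctionestimate}) is not formulated in the $\|f\|_2^{1-\lambda}\|f\|_\infty^{\lambda}$ norm you target; it uses $\|f\|_{L^2}^{12/13+\epsilon}\max_\theta\|f\|_{L^2_{\mathrm{avg}}(\theta)}^{1/13-\epsilon}$, and converting to \eqref{eq_desired_restriction_estimate} is a separate (short but necessary) step using H\"older together with the fact that $12/13>1-\lambda$ for $\lambda>1/13$.
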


\subsubsection*{Proof of Theorem \ref{theorem1.1} for polynomial $h$ by assuming Theorem \ref{theorem1.2}}
By a standard scaling argument, we may assume that our polynomial $h$ satisfies the assumption \eqref{maininequality}. 
By the epsilon removal lemma, it suffices to prove the local restriction estimate: For every $p>3.25$, $p>2q'$, and $\epsilon>0$,
\begin{equation}
\|E_Sf\|_{L^p(B_R)} \leq C_{p,q,\epsilon}R^{\epsilon}\|f\|_{L^q([-1,1]^2)}.
\end{equation}
Theorem \ref{theorem1.2} yields the restricted strong type $(p,q)$ for the extension operator $E_S$ at every $p>3.25$ and $p>2q'$. It suffices to interpolate this with the trivial $L^1 \rightarrow L^{\infty}$ estimate.
\qed
\\

\section{Bad lines and bad pairs}

In this section, we introduce the notions {of} bad lines and bad pairs (of frequency points and frequency squares). These concepts are {introduced to study certain degenerate behaviours that originate from higher order perturbations of the hyperbolic paraboloid (see the higher order terms in \eqref{0519e1.8}).} Throughout the rest of the paper, we let $S$ be a hypersurface given by \eqref{eq_surface_parametrized} with a polynomial $h$ satisfying \eqref{maininequality}.

\subsection{Bad lines} 
 Let $c_1(K_L^{-1})$ be a small number given by 
\begin{equation}
    c_1(K_L^{-1}):=10^{-10d}\epsilon_{0}K_L^{-1}.
\end{equation}
We take a collection $\mathbb{D}$ of points on the unit circle $S^1$ such that $\mathbb{D}$ is a maximal $c_1(K_L^{-1})$-separated set. For every $a\in c_1(K_L^{-1})\Z \cap [-10,10]$ and $v=(v_1,v_2) \in \mathbb{D}$ with $|v_2| \leq |v_1|$, let $l_{1,a,v}$ denote the line passing through the point $(0,a)$ with direction vector $v$. Similarly, for every $a \in c_1(K_L^{-1})\Z \cap [-10,10]$ and $v=(v_1,v_2) \in \mathbb{D}$ with $|v_2|>|v_1|$, let $l_{2,a,v}$ denote the line passing through $(a,0)$ with direction vector $v$. We distinguish these two collections of lines as we will see that the variables $\xi$ and $\eta$ will be rescalled differently. This is a phenomenon that was already present in Lee \cite{MR2218987} and Vargas \cite{MR2106972}.\\

Let us now define bad lines of the hypersurface $S$.
Suppose that a line $l=l_{1,a,v}$ is given. We define an affine transformation $M_l$ associated to the line $l$ in the following way. First, let $M_{l,1}$ be the action of translation that sends $(0,a)$ to the origin and let $M_{l,2}$ be the rotation mapping $v$ to the point $(1,0)$. Define
\begin{equation}
    M_l:=M_{l,2} \circ M_{l,1}.
\end{equation}
We write the polynomial $(h \circ M_l^{-1})(\xi,\eta)$ as 
\begin{equation}\label{0421e2.2}
(h \circ M_l^{-1})(\xi,\eta)=\sum_{i=0}^d\sum_{j=0}^{i}c_{i,j}\xi^{i-j}\eta^j.
\end{equation}
The line $l_{1,a,v}$ is called \textit{a bad line} provided that
\begin{equation}\label{0421e2.3}
\max_{i=2,\ldots,d}(|c_{i,0}|) \leq 10^{-5d}\epsilon_0K_L^{-1}=10^{5d} c_1(K_L^{-1}).
\end{equation}
We define a bad line for $l_{2,a,v}$ in a similar way, with the role of $\xi$ and $\eta$ in \eqref{0421e2.2} and \eqref{0421e2.3} exchanged.\\

Take $\iota\in \{1, 2\}$. Let $L_{\iota,a,v}$ denote the $c_1(K_L^{-1})$-neighborhood of $l_{\iota,a,v}$ and call it \textit{a bad strip}.
We denote {the} collection of all the bad strips by
\begin{equation}
\mathbb{L}:=\{ 
L_{\iota,a,v }: l_{\iota,a,v} \text{ is a bad line}; \iota=1, 2
\}.
\end{equation}
{The directions of bad lines change as the polynomial $h$ changes.} However, the total number of bad lines can always be controlled. Also, bad lines spread out.

\begin{lem}\label{linecount}
Let $K_L$ be a large number.
The total number of bad lines is at most $10^{10d}(c_1(K_L^{-1}))^{-1}$.
For every square $\tau_{L}$ of side length $K_{L}^{-1}$, the number of bad strips intersecting $\tau_L$ is at most $10^{20d}\epsilon_0^{-1}$.
\end{lem}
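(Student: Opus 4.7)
The plan is to extract both counts from the single constraint $|c_{2,0}|\le 10^{5d}c_1(K_L^{-1})$; the remaining bad-line conditions for $i\ge 3$ only further restrict the counted set. Substituting $M_l^{-1}(\xi,0)=(v_1\xi,v_2\xi+a)$ into $h$ and reading off the coefficient of $\xi^2$ gives
\[
c_{2,0}(v,a) \;=\; v_1v_2 + a_{2,0}v_1^2 + a_{2,2}v_2^2 + R(v,a),
\]
where $R(v,a)$ collects contributions from the degree-$\ge 3$ monomials of $h$. Combining $|a_{i,j}|\le \epsilon_0 100^{-d}$ from \eqref{maininequality} with the trivial estimates $\binom{j}{k}\le 2^d$ and $|a|^{i-2}\le 10^{d-2}$ gives $|R(v,a)|\lesssim \epsilon_0\, 5^{-d}$ uniformly for $|v|\le 1$, $|a|\le 10$. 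The $100^d$ weight in \eqref{maininequality} is calibrated precisely so that $R$ is negligible compared with the dominating quadratic $Q(v):=v_1v_2+a_{2,0}v_1^2+a_{2,2}v_2^2$ away from the zero locus of $Q$.

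Parametrize $v\in S^1$ by the angle $\theta$, so that $Q(\theta)=\tfrac12\sin 2\theta+O(\epsilon_0)$ and $\partial_\theta Q=\cos 2\theta+O(\epsilon_0)$. On the arc $|v_2|\le|v_1|$ relevant to $l_{1,a,v}$, outside the two sub-arcs $|\theta|\le\pi/6$ and $|\theta-\pi|\le\pi/6$ one has $|Q(\theta)|\ge 1/8$, which dwarfs $|R|+\delta$ with $\delta:=10^{5d}c_1(K_L^{-1})=10^{-5d}\epsilon_0K_L^{-1}$, so no such $l_{1,a,v}$ is bad. Inside these sub-arcs, $|\partial_\theta c_{2,0}(\theta,a)|\ge 1/4-|\partial_\theta R|\ge 1/8$ uniformly in $a$; hence for every fixed $a\in c_1(K_L^{-1})\Z\cap[-10,10]$
\[
\#\bigl\{\theta\in\mathbb D:\; |c_{2,0}(\theta,a)|\le\delta\bigr\}\;\le\;16\delta/c_1(K_L^{-1})+O(1)\;\le\;10^{5d+2},
\]
and the same per-$a$ bound holds for $l_{2,a,v}$ by exchanging the coordinates $\xi\leftrightarrow\eta$.

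For the first conclusion I multiply this per-$a$ count by the total number of $a$'s on the grid ($\le 20/c_1(K_L^{-1})+1$) and sum over the two line types, obtaining at most $10^{15d+5}K_L/\epsilon_0\le 10^{10d}/c_1(K_L^{-1})$ bad lines (using $5d\ge 5$). For the second conclusion, the distance from the centre $(x_0,y_0)$ of a side-$K_L^{-1}$ square $\tau_L$ to $l_{1,a,v}$ equals $|v_2 x_0-v_1(y_0-a)|$, so requiring the $c_1(K_L^{-1})$-neighbourhood of $l_{1,a,v}$ to meet $\tau_L$ pins $a$ to a window of length $\lesssim K_L^{-1}$ and hence to $\le 5K_L^{-1}/c_1(K_L^{-1})\le 10^{10d+1}/\epsilon_0$ grid values of $a$ per admissible $\theta$; multiplying by $10^{5d+2}$ bad $\theta$'s per $a$ and adding the symmetric $l_{2,a,v}$ contribution gives the claimed $\le 10^{20d}/\epsilon_0$ bad strips meeting $\tau_L$. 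The chief technical point, and what I expect to be the main obstacle, is the asymmetry between the two variables: $\partial_a c_{2,0}=\partial_a R$ is of size only $\epsilon_0 5^{-d}$, so $a$ is \emph{not} directly constrained by the $c_{2,0}$ condition, whereas $\partial_\theta c_{2,0}$ is of order $1$ on the relevant arc. This asymmetry is what forces the two-count structure (count $\theta$'s for each fixed $a$, then sum over $a$) and is the reason the proof fits comfortably inside the budgets $10^{10d}$ and $10^{20d}$ without needing any input beyond the single constraint $|c_{2,0}|\le\delta$.
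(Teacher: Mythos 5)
The first count is essentially the paper's: fix $a$, exploit that $\partial_\theta c_{2,0}$ is bounded below on the only arc where bad directions can live, count admissible $\theta$'s per $a$, and sum over $a$. Nothing to change there.

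The second count has a gap. You establish (i) for each fixed $\theta$, the requirement that the strip meet $\tau_L$ pins $a$ to at most $N_1:=10^{10d+1}/\epsilon_0$ grid values, and (ii) for each fixed $a$, there are at most $N_2:=10^{5d+2}$ bad $\theta$'s, and then you output $N_1N_2$. But a degree bound on each side of a bipartite incidence does not bound the number of incidences by the product; one also needs to bound the number of \emph{participating} $a$'s (or $\theta$'s). That bound is not automatic, because the per-$\theta$ $a$-windows are centred at $y_0-x_0\tan\theta$, which drifts as $\theta$ ranges over the (size-$\approx\epsilon_0$) arc of potentially bad directions, so a priori the union of $a$-windows can contain far more than $N_1$ grid points. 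The fix is exactly the fact you relegate to a closing remark: $|\partial_a c_{2,0}|\lesssim\epsilon_0\,5^{-d}$ forces the bad direction $\theta_0(a)$ (the zero of $\theta\mapsto c_{2,0}(\theta,a)$) to be nearly constant in $a$, while the direction $\theta_1(a)=\arctan\big((y_0-a)/x_0\big)$ singled out by intersection with $\tau_L$ satisfies $|\theta_1'(a)|\gtrsim 1$ on the range where $|\theta_1|\le\pi/4$; the compatibility $|\theta_0(a)-\theta_1(a)|\lesssim K_L^{-1}$ required of a bad intersecting strip therefore confines $a$ to a single window of length $O(K_L^{-1})$, i.e.\ to $\lesssim 10^{10d}/\epsilon_0$ grid values. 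Only after this does multiplying by the per-$a$ count of bad directions give the stated bound. (The paper's own write-up records only $|e_2-e_1\tan\theta|\le 2K_L^{-1}$, so it is at least as terse; but what you wrote is a restatement of two per-fibre bounds, not a count of pairs, and the observation about $\partial_a c_{2,0}$ needs to be promoted from postscript to the load-bearing step.)
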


\begin{proof}[Proof of Lemma \ref{linecount}]
The first statement follows from
\begin{gather}\label{pointbadestimate}
\#\{l_{\iota,a,v}: l_{\iota,a,v} \text{ is a bad line}  \} \leq 10^{8d}
\end{gather}
for every $\iota\in \{1, 2\}$ and every $a \in c_1(K_L^{-1})\Z$. Without loss of generality, we take $\iota=1$.  We write the line $l:=l_{1,a,v}$ as 
\begin{equation}
    \{(\xi, \eta): \eta=(\tan{\theta}) \xi+a\} 
\end{equation}
for some $\theta$ with $|\theta| \leq \pi/4$.
We take the translation $M_{l,1}^{-1}:(\xi,\eta) \mapsto (\xi,a+\eta)$ and the rotation
\begin{equation}
M_{l,2}^{-1}:(\xi,\eta) \mapsto (\xi \cos\theta-\eta\sin\theta, \xi\sin\theta+\eta\cos\theta)
\end{equation}
so that $M_l^{-1}=M_{l,1}^{-1}
\circ
M_{l,2}^{-1}$. 

Let us compute the coefficient of $\xi^2$ of the polynomial $(h \circ M_l^{-1})(\xi,\eta)$. 
The quadratic part of $(h \circ M_{l,1}^{-1})(\xi,\eta)$ is given by 
\begin{equation}
\begin{split}
\Big(
a_{2,0}+
 \sum_{i=3}^{d}a_{i,i-2}a^{i-2}\Big)\xi^2
 &+
 \Big(1+\sum_{i=3}^{d}(i-1)a_{i,i-1}a^{i-2}\Big)\xi\eta
 \\&+\Big(a_{2,2}+\sum_{i=3}^{d}
 \frac{i(i-1)}{2} a_{i,i}a^{i-2}\Big)
 \eta^2.
\end{split}
\end{equation}
The coefficient of $\xi^2$ of the polynomial 
$
(h \circ  M_{l,1}^{-1} \circ M_{l,2}^{-1} )(\xi,\eta) 
$
is given by
\begin{equation}\label{2.10}
\begin{split}
    \Big(
a_{2,0}+
 \sum_{i=3}^{d}a_{i,i-2}a^{i-2}\Big) (\cos \theta)^2
 &+\Big(1+\sum_{i=3}^{d}(i-1)a_{i,i-1}a^{i-2}\Big)\sin \theta \cos \theta
 \\&+\Big(a_{2,2}+\sum_{i=3}^{d}
 \frac{i(i-1)}{2} a_{i,i}a^{i-2})
 \Big)(\sin \theta)^2.
\end{split}
\end{equation}
We denote this function by $F(\theta)$.
By focusing only on the quadratic part, \eqref{pointbadestimate} follows from
\begin{equation}\label{2.11}
\#\{\theta \in c_1(K_L^{-1})\Z \cap [-\pi/4,\pi/4]: |F(\theta)| \leq
10^{-3d}\epsilon_0K_L^{-1}  \} \leq 10^{8d}.
\end{equation}
If $\epsilon_0^{1/2} \leq |\theta| \leq \pi/4$, by the assumption \eqref{maininequality}, we obtain $|F(\theta)| \gtrsim 1$ and the line $l_{1,a,v}$ is not a bad line. Hence, we may assume that $|\theta| \leq \epsilon_0^{1/2}$. 
Note that
\begin{equation}\label{2.12}
\begin{split}
F'(\theta)&=\Big(1+\sum_{i=3}^{d}(i-1)a_{i,i-1}a^{i-2}\Big)\cos 2\theta
 \\&+
\Big(-
(a_{2,0}+
 \sum_{i=3}^{d}a_{i,i-2}a^{i-2})
 +(a_{2,2}+\sum_{i=3}^{d}
 \frac{i(i-1)}{2} a_{i,i}a^{i-2})
 \Big)\sin 2\theta.
\end{split}
\end{equation}
The assumption \eqref{maininequality} implies that $F'(\theta) \geq 1/2$ for every $|\theta|\le \epsilon_0^{1/2}$. The inequality \eqref{2.11} follows immediately. 
\medskip

Let us prove the second statement. Suppose that a square $\tau_{L}$ is fixed. Denote by $(c_1,c_2)$ the left bottom corner of $\tau_L$. By \eqref{pointbadestimate}, it suffices to show that there are at most $10^{11d}\epsilon_0^{-1}$ many $a$ such that $l_{\iota,a,v}$ intersects $\tau_L$ for some $v$. Without loss of generality, we may assume that $\iota=1$. We can write $l_{1,a,v}$ intersecting $\tau_L$ as
\begin{equation}
\{
    (\xi, \eta): \eta=(\tan{\theta}) \xi+(c_2-c_1\tan \theta)+(e_2-e_1\tan \theta)\}
\end{equation}
for some $e_1$ and $e_2$ with $0 \leq e_1,e_2 \leq K_L^{-1}$. It suffices to note that 
\begin{equation}
|e_2-e_1 \tan \theta| \leq 2K_L^{-1}
\end{equation}
and the points $a$ are  $ 10^{-10d}\epsilon_0 K_L^{-1}$-separated.
\end{proof}

We fix $\epsilon_0,\epsilon>0$ and aim to prove Theorem \ref{theorem1.2}. The strategy of the proof is to apply the induction on scales. 
Our induction hypothesis is as follows: For every $3/13< \lambda <1$, surface $S$ with a polynomial $h$ satisfying \eqref{maininequality},  $R'\le R/2$, ball $B_{R'}$ of radius $R'$, and function $f:[-1,1]^2 \rightarrow \C$, we assume 
\begin{equation}\label{induction}
    \|E_S f\|_{L^{3.25}(B_{R'})}
    \le C_{\epsilon,d,\lambda} (R')^{\epsilon}\|f\|_{L^2([-1,1]^2)}^{1-\lambda} \|f\|_{L^{\infty}([-1.1]^2)}^{\lambda}.
\end{equation}
Under this induction hypothesis, we will prove \eqref{eq_desired_restriction_estimate}. 
\\

We will use two different types of rescaling argument: Isotropic rescaling and anisotropic rescaling. 
When a function is supported on a small square, we can make use of the induction hypothesis more effectively via isotropic rescaling. When a function is supported on a bad strip, anisotropic rescaling will come into play. In particular, such anisotropic rescaling is also compatible with our induction hypothesis. 

For every set $A \subset [-1,1]^2$ we denote $g 1_{A}$ by $g_{A}$. Here, $1_A$ is the characteristic function of $A$.
\begin{lem}\label{squarerescaling}
Suppose that $10^{10d} \leq K \leq R' \leq R/2$.
 Under the induction hypothesis \eqref{induction}, for every $3/13<\lambda<1$, surface $S$ with a polynomial $h$ satisfying \eqref{maininequality},  ball $B_{R'}$ of radius $R'$,  square $\tau \subset [-1,1]^2$ with side length $K^{-1}$, and  function $f:[-1,1]^2 \rightarrow \C$, we obtain
\begin{equation}
\|E_Sf_{\tau}\|_{L^{3.25}(B_{R'})} \leq
K^{-1-\lambda+\frac{4}{3.25}}CC_{\epsilon,d,\lambda}(R')^{\epsilon}
\|f_{\tau}\|_{L^2}^{1-\lambda}
\|f_{\tau}\|_{L^\infty}^{\lambda},
\end{equation}
where $C$ is some universal constant.
\end{lem}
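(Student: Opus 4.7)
The plan is to perform a standard parabolic rescaling centered at $\tau$ and then invoke the induction hypothesis \eqref{induction} for the resulting extension operator. Let $(\xi_0,\eta_0)$ denote the center of $\tau$ and substitute $\xi=\xi_0+K^{-1}\xi'$, $\eta=\eta_0+K^{-1}\eta'$. After the matching anisotropic linear change of the physical variables
\begin{equation*}
y_1 := K^{-1}(x_1+\partial_\xi h(\xi_0,\eta_0)\,x_3),\ \ y_2 := K^{-1}(x_2+\partial_\eta h(\xi_0,\eta_0)\,x_3),\ \ y_3 := K^{-2}x_3,
\end{equation*}
the phase of $E_Sf_\tau$ reduces, modulo an inconsequential overall phase, to $\xi'y_1+\eta'y_2+\tilde h_K(\xi',\eta')y_3$, where
\begin{equation*}
\tilde h_K(\xi',\eta') := \sum_{k=2}^d K^{2-k}\sum_{j=0}^k \frac{1}{(k-j)!\,j!}\,\partial_\xi^{k-j}\partial_\eta^j h(\xi_0,\eta_0)\,\xi'^{k-j}\eta'^{j}
\end{equation*}
is the Taylor expansion of $h$ around $(\xi_0,\eta_0)$ after rescaling.

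Next I would normalize $\tilde h_K$ so that it sits inside the hypothesis class \eqref{0519e1.8}--\eqref{maininequality}. Setting $\alpha := \partial_\xi\partial_\eta h(\xi_0,\eta_0)=1+O(\epsilon_0)$ and rescaling $\xi''=\sqrt{\alpha}\,\xi'$, $\eta''=\sqrt{\alpha}\,\eta'$ (absorbed into the $y$-variables) makes the $\xi''\eta''$-coefficient equal to $1$. The remaining quadratic coefficients equal $a_{2,0}+O(\epsilon_0)$ and $a_{2,2}+O(\epsilon_0)$, while every coefficient of degree $k\geq 3$ gains a factor $K^{2-k}\leq K^{-1}\leq 10^{-10d}$. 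Combined with the $100^d$ weight in \eqref{maininequality}, this is comfortably inside the smallness required there, so the induction hypothesis \eqref{induction} applies to the surface $\tilde S$ associated with the normalized $\tilde h_K$.

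For the quantitative bookkeeping: the identity $E_Sf_\tau(x)=K^{-2}e^{i\Phi_0}E_{\tilde S}g(Tx)$ holds with $g(\xi',\eta')=f(\xi_0+K^{-1}\xi',\eta_0+K^{-1}\eta')$ supported in $[-1/2,1/2]^2$, and with $T$ the linear map above, whose upper triangular Jacobian satisfies $|\det T|=K^{-4}$. Changing variables in the $L^{3.25}$-norm over $B_{R'}$ therefore produces the factor $K^{-2+4/3.25}$, and $T(B_{R'})$ is contained in a ball of radius $\le CR'/K\le R/2$ (using $K\ge 10^{10d}$ and $R'\le R/2$). Since $\|g\|_\infty=\|f_\tau\|_\infty$ and $\|g\|_2=K\|f_\tau\|_2$, applying \eqref{induction} to $E_{\tilde S}g$ at scale $CR'/K$ and using $(R'/K)^\epsilon\le (R')^\epsilon$ yields
\begin{equation*}
\|E_Sf_\tau\|_{L^{3.25}(B_{R'})}\le K^{-2+4/3.25}\cdot C\,C_{\epsilon,d,\lambda}(R')^\epsilon\cdot K^{1-\lambda}\|f_\tau\|_2^{1-\lambda}\|f_\tau\|_\infty^\lambda,
\end{equation*}
and the exponent of $K$ collapses to $-1-\lambda+4/3.25$, which is the claim.

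The main obstacle is verifying that $\tilde h_K$ (after the $\alpha$-normalization) genuinely belongs to the hypothesis class \eqref{maininequality}; this is precisely why the induction is stated uniformly over all such polynomials, and why the bound $K\ge 10^{10d}$ is built in — it guarantees that the $K^{2-k}$ gain on the higher order Taylor coefficients defeats the $100^d$ factor in \eqref{maininequality} and leaves room for the harmless $O(\epsilon_0)$ perturbations to the quadratic coefficients.
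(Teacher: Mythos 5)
Your proposal is correct and follows essentially the same route as the paper: parabolic rescaling centered at a point of $\tau$ (the paper uses a corner, you use the center), normalization of the $\xi\eta$-coefficient (the paper divides $h_1$ by $(1+A_1)$ and absorbs the factor into $x_3$; you rescale $\xi',\eta'$ by $\sqrt{\alpha}$), verification that the rescaled polynomial lies in the class \eqref{maininequality} using the $K^{2-k}$ gain against the $100^d$ weight, and the same Jacobian bookkeeping $K^{-2+4/3.25}\cdot K^{1-\lambda}$. The differences are purely cosmetic.
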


\begin{proof}[Proof of Lemma \ref{squarerescaling}] The proof of this lemma is fairly standard. For example, we refer to the proof of Lemma 2.5 of \cite{MR3454378}.
Without loss of generality, we assume that the center of the ball $B_{R'}$ is the origin.
Write $\tau=[c_1,c_1+K^{-1}] \times [c_2,c_2+K^{-1}]$.
 We apply the change of variables: 
 \begin{equation}
 (\xi,\eta) \mapsto \big(K(\xi-c_1),K(\eta-c_2)\big)
 \end{equation}
 and obtain
\begin{equation}
 \begin{split}
 |E_Sf_{\tau}(x)|
 =
 \Bigg|\frac{1}{K^2}\int_{[-1,{1}]^2} g(\xi, \eta)
 e\Big(\big\langle Mx, (\xi,\eta,{h}_1(\xi,\eta)) \big\rangle \Big)
  d\xi d\eta \Bigg|,
 \end{split}
\end{equation}
 where 
 \begin{equation}
     g(\xi, \eta):={f}_{\tau}(c_1+\xi/K,c_2+\eta/K),
 \end{equation}
 $M$ is a linear transformation of the form
\begin{equation}
 Mx=\Big(\frac{x_1+O(1)x_3}{K},\frac{x_2+O(1)x_3}{K},\frac{x_3}{K^2}\Big)
\end{equation}
 and
 ${h}_1$ is the polynomial {given} by
\begin{equation}
 \begin{split}
     {h}_1(\xi,\eta):=
     \big(1+A_1 \big)\xi \eta
     +(A_{2,0})\xi^2+(A_{2,2})\eta^2
     +
     \sum_{i=3}^{d}
     \frac{1}{K^{i-2}}
     \sum_{j=0}^{i}
     (A_{i,j}) \xi^{i-j}\eta^j,
 \end{split}
\end{equation}
 with 
\begin{equation}\label{322}
 \begin{split}
 A_1 &:= \sum_{i=3}^{d}\sum_{j=1}^{i}{(i-j)j}c_1^{i-j-1}c_2^{j-1}a_{i,j},
 \\
A_{i,j}&:= 
 \sum_{i'=i}^{d}\sum_{j'=j}^{i'} \binom{j'}{j}\binom{i'-j'}{i-j}c_1^{i'-j'-i+j}c_2^{j'-j}a_{i',j'}.
 \end{split}
\end{equation}
Note that $|A_1| \lesssim \epsilon_0$.
We define the new surface 
\begin{equation}
    S_1:=\{(\xi, \eta, (1+ A_1)^{-1}h_1(\xi, \eta)): (\xi, \eta)\in [-1, 1]^2\}.
\end{equation}
Via a simple change of variables, we obtain 
\begin{equation}
\begin{split}
\|E_Sf_{\tau}\|_{L^{3.25}(B_{R'})} &\lesssim
K^{-2+\frac{4}{3.25}}
\|E_{S_1}{g}\|_{L^{3.25}(B_{R'})}.
\end{split}
\end{equation}
The ball $B_{R'}$ on the right hand side could have been replaced by a smaller ball. However, we will not take advantage of that gain. Let us check that the polynomial $(1+A_1)^{-1}{h}_1$ satisfies the induction hypothesis \eqref{maininequality}. We need to check that
 \begin{equation}
 \begin{split}
     |A_{2,0}|+|A_{2,2}| +100^d\Big(\sum_{i=3}^d \frac{1}{K^{i-2}}\sum_{j=0}^{i}|A_{i,j}| \Big) \leq 
       \big(1+A_1 \big) \epsilon_0.
 \end{split}
\end{equation}
 This follows from
 \begin{equation}\label{326}
 \begin{split}
     |A_{2,0}|+|A_{2,2}|
     +\epsilon_0|A_1|+100^d\Big(\sum_{i=3}^d \frac{1}{K^{i-2}}\sum_{j=0}^i|A_{i,j}|\Big) \leq \epsilon_0.
 \end{split}
 \end{equation}
 Let us show why the above inequality is true. By the definition of $A_{i,j}$ (see \eqref{322}), we have
 \begin{equation}
 \begin{split}
     &A_{2,0}=a_{2,0}+\sum_{i'=3}^{d}\sum_{j'=0}^{i'} \binom{i'-j'}{2}c_1^{i'-j'-2}c_2^{j'}a_{i',j'}
     \\&
     A_{2,2}=a_{2,2}+\sum_{i'=3}^{d}\sum_{j'=2}^{i'} \binom{j'}{2}c_1^{i'-j'}c_2^{j'-2}a_{i',j'}.
\end{split}
 \end{equation}
 Note that $A_1$  is a linear combination of $\{a_{i',j'}\}_{i' \geq 3} \cup \{a_{i',j'}\}_{j' \geq 3}$. The constants $A_{i,j}$ with $i \geq 3$ or $j \geq 3$ also satisfy the same property. Therefore, \eqref{326} holds true by the initial condition on the coefficients $a_{i,j}$. Hence, we can apply the induction hypothesis and obtain
 \begin{equation}
 \begin{split}
 \|E_Sf_{\tau}\|_{L^{3.25}(B_{R'})} &\lesssim
 K^{-2+\frac{4}{3.25}}\|E_{S_1}g\|_{L^{3.25}(B_{R'})}
 \\& \lesssim
 K^{-2+\frac{4}{3.25}}C_{\epsilon,d,\lambda}(R')^{\epsilon}\|g\|_{L^2}^{1-\lambda}\|g\|_{L^{\infty}}^{\lambda}
 \\& \lesssim
 K^{-1-\lambda+\frac{4}{3.25}}C_{\epsilon,d,\lambda}(R')^{\epsilon}\|f_{\tau}\|_{L^2}^{1-\lambda}\|f_{\tau}\|_{L^{\infty}}^{\lambda}.
 \end{split}
 \end{equation}
The last inequality follows from
 $
 \|g\|_{L^{\infty}}=\|f_{\tau}\|_{L^{\infty}}$ and
 $\|g\|_{L^2}=K\|f_{\tau}\|_{L^2}$. This completes the proof.
\end{proof}

\begin{lem}\label{striprescaling}
Under the induction hypothesis \eqref{induction},
for every $L \in \mathbb{L}$, large numbers $K_L,R'$ with $10^{10d} \leq K_L \leq R' \leq R/2$, $3/13<\lambda<1$, surface $S$ with a polynomial $h$ satisfying \eqref{maininequality}, ball $B_{R'}$ of radius $R'$, and function $f:[-1,1]^2 \rightarrow \C$, we have
\begin{equation}
\Norm{E_Sf_{L} }_{L^{3.25}(B_{
R'})}
\leq (K_L)^{(-\frac12-\frac{\lambda}{2}+\frac{2}{3.25})}
C_dC_{\epsilon,d,\lambda}
({R'})^{\epsilon}\|f_{L}\|_{L^2}^{1-\lambda}\|f_L\|_{L^\infty}^{\lambda},
\end{equation}
for some constant $C_d$ depending only on $d$. 
\end{lem}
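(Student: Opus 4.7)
The plan is to mimic the isotropic rescaling of Lemma \ref{squarerescaling} but using an anisotropic scaling tailored to the geometry of the bad strip. Without loss of generality assume $L=L_{1,a,v}$ (the case $\iota=2$ is entirely analogous, with the roles of $\xi$ and $\eta$ swapped). I would begin by applying the affine transformation $M_l$ associated to the bad line $l=l_{1,a,v}$: the strip $L$ becomes a neighborhood $\{|\eta|\leq c_1(K_L^{-1})\}$ of the $\xi$-axis, while $h$ is replaced by $h\circ M_l^{-1}$ with coefficients $c_{i,j}$ as in \eqref{0421e2.2}. From the badness of $l$ together with \eqref{maininequality}, we record the bounds $|c_{i,0}|\leq 10^{-5d}\epsilon_0 K_L^{-1}$ for $i=2,\dots,d$ and $|c_{2,1}|\geq 100^{-1}$.

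Next I would apply the anisotropic change of variables $(\xi,\eta)\mapsto(\xi,K_L^{-1}\eta)$ in frequency space, setting $g(\xi,\eta):=(f_L\circ M_l^{-1})(\xi,K_L^{-1}\eta)$ and $y:=(x_1,c_{2,1}^{-1}K_L^{-1}x_2,c_{2,1}^{-1}K_L^{-1}x_3)$. The ball $B_{R'}$ in $x$-space is sent into a box contained in a ball of radius $R'$ in $y$-space since $K_L\leq R'$. A direct substitution yields
\[
E_Sf_L(x)\;=\;c_{2,1}^{-1}K_L^{-1}\,E_{\tilde S}g(y),
\]
where $\tilde S$ is the graph of the polynomial
\[
\tilde h(\xi,\eta)\;:=\;c_{2,1}^{-1}K_L\,(h\circ M_l^{-1})(\xi,K_L^{-1}\eta),
\]
whose coefficient of $\xi^{i-j}\eta^{j}$ equals $c_{2,1}^{-1}c_{i,j}K_L^{1-j}$.

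The critical step is to verify that $\tilde h$ satisfies \eqref{maininequality}, up to absorbing a $d$-dependent constant into $C_d$. The $\xi^2$ coefficient is at most $100\cdot|c_{2,0}|K_L\leq 100\cdot 10^{-5d}\epsilon_0$, hence small. The $\eta^2$ coefficient is $c_{2,1}^{-1}c_{2,2}K_L^{-1}$; since the proof of Lemma \ref{linecount} forces $|\theta|\leq\epsilon_0^{1/2}$ for a bad line, $|c_{2,2}|\lesssim\epsilon_0^{1/2}$ and the factor $K_L^{-1}$ does the rest. For $i\geq 3$ the case $j=0$ is handled by the bad-line bound and the case $j\geq 2$ by the factor $K_L^{1-j}$. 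The delicate case is $j=1$, $i\geq 3$: expanding $h\circ M_l^{-1}$ via the multinomial theorem, the contribution of $a_{i,1}$ to $c_{i,1}$ is essentially $a_{i,1}\cos^i\theta$, already bounded by $\epsilon_0/100^d$; all remaining contributions either carry a factor $\sin^k\theta$ with $k\geq 1$ (with $|\sin\theta|\lesssim 10^{-5d}\epsilon_0 K_L^{-1}$ from Lemma \ref{linecount}) or a factor $a^{i'-i}$ paired with $a_{i',j'}$ for $i'>i$, producing contributions of size $10^{i'-i}\cdot\epsilon_0/100^d$, which are summable and dominated by the exponential decay in $100^{-d}$. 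This gives $|c_{i,1}|\lesssim_d\epsilon_0/100^d$, so $\tilde h$ satisfies \eqref{maininequality} with $\epsilon_0$ replaced by $C_d\epsilon_0$.

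Once the induction hypothesis applies to $E_{\tilde S}g$ on $B_{R'}$, I would combine $\|g\|_{L^\infty}=\|f_L\|_{L^\infty}$, $\|g\|_{L^2}=K_L^{1/2}\|f_L\|_{L^2}$, and the Jacobian factor $K_L^{2}$ (from $dx=K_L^{2}\,dy$) to obtain
\[
\|E_Sf_L\|_{L^{3.25}(B_{R'})}\;\lesssim\;K_L^{-1+\frac{2}{3.25}}\cdot C_dC_{\epsilon,d,\lambda}(R')^{\epsilon}\cdot K_L^{(1-\lambda)/2}\|f_L\|_{L^2}^{1-\lambda}\|f_L\|_{L^\infty}^{\lambda},
\]
and the total power of $K_L$ equals $-1+\tfrac{2}{3.25}+\tfrac{1-\lambda}{2}=-\tfrac12-\tfrac{\lambda}{2}+\tfrac{2}{3.25}$, matching the statement. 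The main obstacle is the verification of \eqref{maininequality} for $\tilde h$ at the coefficients $\tilde a_{i,1}$, $i\geq 3$, where one must exploit simultaneously the smallness of $\sin\theta$ forced by the bad-line condition and the rapid decay of the original $a_{i,j}$ provided by the factor $100^d$ in \eqref{maininequality}.
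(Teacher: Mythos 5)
Your computation of the rescaling, the Jacobian bookkeeping, and the final power of $K_L$ are all correct, and your change of variables $(\xi,\eta)\mapsto(\xi,K_L^{-1}\eta)$ is exactly the paper's anisotropic rescaling. However, there is a genuine gap at the step where you invoke the induction hypothesis: the rescaled polynomial $\tilde h$ does \emph{not} in general satisfy \eqref{maininequality}, and this cannot be repaired by ``absorbing a $d$-dependent constant into $C_d$.'' The induction hypothesis \eqref{induction} is stated only for surfaces whose polynomial satisfies \eqref{maininequality} with the fixed $\epsilon_0$; it is not available for polynomials satisfying a $C_d$-inflated version. Concretely, after the anisotropic rescaling the coefficients $\tilde a_{i,1}$ ($i\geq 3$) carry no helping factor of $K_L^{-1}$, so $100^d\sum_{i\geq 3}|\tilde a_{i,1}|$ alone can already saturate the full budget $\epsilon_0$ allowed by \eqref{maininequality}; adding even the tiny contributions from $|\tilde a_{2,0}|$, $|\tilde a_{2,2}|$, and the $j\neq 1$ terms pushes the total above $\epsilon_0$. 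The paper resolves exactly this issue by inserting an extra step you have omitted: decompose $[-1,1]^2$ into $O(10^{20d})$ squares of side $10^{-10d}$, apply the triangle inequality, and perform a further isotropic rescaling by $10^{10d}$ on each small square. This second rescaling multiplies the degree-$i$ coefficients ($i\geq 3$) by $10^{-10(i-2)d}$, pushing the total sum well below $\epsilon_0$; the cost is the $C_d$-factor of $\simeq 10^{20d}$ squares, which is precisely the source of the $C_d$ in the statement of the lemma.

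There is also a smaller but real error in your estimate of the rotation angle. You assert $|\sin\theta|\lesssim 10^{-5d}\epsilon_0 K_L^{-1}$ ``from Lemma \ref{linecount},'' but the proof of that lemma only establishes $|\theta|\leq\epsilon_0^{1/2}$. The bad-line condition constrains $|F(\theta)|\leq 10^{-3d}\epsilon_0 K_L^{-1}$, and since $F'\geq 1/2$ on $|\theta|\leq\epsilon_0^{1/2}$ this forces $\theta$ into a short interval around the (unique) zero of $F$ — but that zero can lie anywhere in $[-\epsilon_0^{1/2},\epsilon_0^{1/2}]$ depending on $F(0)$, so $|\theta|$ itself can be as large as $\epsilon_0^{1/2}$. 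This weakens your bounds on the cross-terms from the rotation, though it is a secondary issue compared to the missing decomposition step: even for the horizontal strip (where $M_l$ is the identity and no rotation is involved), the $j=1$ coefficients already prevent $\tilde h$ from satisfying \eqref{maininequality} directly.
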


\begin{proof}[Proof of Lemma \ref{striprescaling}]
The proof is very similar to that of Lemma \ref{squarerescaling}.
Without loss of generality, we assume that the center of the ball $B_{R'}$ is the origin.
Also, after applying an affine transformation, we assume that our strip $L$ is 
\begin{equation}
    [-1,1] \times [-c_1(K_L^{-1}), c_1(K_L^{-1})].
\end{equation}
Since $L \in \mathbb{L}$, by the definition of a bad line, we obtain
\begin{equation}
\max_{k=2,\ldots,d}\{ |a_{k,0}| \} \leq 10^{-5d}K_L^{-1}\epsilon_0.
\end{equation}
We apply the anisometric scaling $(\xi,\eta) \mapsto (\xi,K_L\eta)$. After this scaling, $E_Sf_L(x)$ becomes 
\begin{equation}
\begin{split}
K_L^{-1}\int_{[-1,1]^2}
f_L(\xi,{K_L^{-1} \eta })e\big(\big\langle
(x_1,{K_L^{-1}x_2},K_L^{-1}{x_3}),(\xi,\eta,h_2(\xi,\eta))
\big\rangle \big)
\,d\xi d\eta.
\end{split}
\end{equation}
Here, the polynomial $h_2(\xi,\eta)$ is given by
\begin{equation}
\begin{split}
\xi\eta+a_{2,0}K_L\xi^2+a_{2,2}K_L^{-1}\eta^2
+
\sum_{i=3}^{d}\sum_{j=0}^{i}a_{i,j}(K_L^{-1})^{j-1}\xi^{i-j}\eta^j.
\end{split}
\end{equation}
Let us use the notation $g(\xi,\eta)=f(\xi,K_L^{-1}\eta)$ and let $S_2$ denote the surface associated with $h_2$. We can write 
\begin{equation}
E_{S}f_L(x)=K_L^{-1}E_{S_2}g\Big(x_1,K_L^{-1}{x_2},K_L^{-1}{x_3}\Big).
\end{equation}
We apply the change of variables on the physical variables
$
(x_1,x_2,x_3) \mapsto (x_1,K_L^{-1}x_2,K_L^{-1}x_3)
$
and obtain 
\begin{equation}
    \Norm{E_Sf_{L} }_{L^{3.25}(B_{
R'})}\lesssim
(K_L)^{-1+\frac{2}{3.25}}\|E_{S_2}g\|_{L^p(B_{R'})}.
\end{equation}
The polynomial $h_2(\xi,\eta)$ may not satisfy the induction hypothesis \eqref{maininequality}. To deal with this issue, we decompose the square $[-1,1]^2$ into smaller squares with side length $10^{-10d}$. By the triangle inequality, we obtain
\begin{equation}
    \|E_{S_2}g\|_{L^p(B_{R'})} \lesssim \sum_{ q\in \mathcal{P}(10^{-10d})}\|E_{S_2}g_q\|_{L^p(B_{R'})} \lesssim 
    \sup_{ q \in \mathcal{P}( 10^{-10d})}\|E_{S_2}g_q\|_{L^p(B_{R'})}.
\end{equation}
For convenience, we assume that $q=[0,10^{-10d}]^2$. The proof of the general case is similar. We apply the scaling $(\xi,\eta) \mapsto (10^{10d}\xi,10^{10d}\eta)$. Then $E_{S_2}g_q(x)$ becomes
\begin{equation}
    10^{-20d}\int_{[-1,1]^2}g_q(10^{-10d}\xi,10^{-10d}\eta)
    e\big(\big\langle
(x_1',{x_2'},{x_3'}),(\xi,\eta,h_3(\xi,\eta))
\big\rangle \big)
\,d\xi d\eta
\end{equation}
where $(x_1',x_2',x_3')=(10^{-10d}x_1,10^{-10d}x_2,10^{-20d}x_3)$ and $h_3(\xi,\eta)$ is given by
\begin{equation}
\begin{split}
\xi\eta+a_{2,0}K_L\xi^2+a_{2,2}K_L^{-1}\eta^2
+
\sum_{i=3}^{d}10^{-10(i-2)d}\sum_{j=0}^{i}a_{i,j}(K_L^{-1})^{j-1}\xi^{i-j}\eta^j.
\end{split}
\end{equation}
This polynomial satisfies the induction assumption \eqref{maininequality}.
We
apply the change of variables 
\begin{equation}
(x_1,x_2,x_3) \mapsto (10^{-10d}x_1,10^{-10d}x_2,10^{-20d}x_3),
\end{equation}
and by
the induction hypothesis \eqref{induction} we obtain
\begin{equation}
\begin{split}
\Norm{E_Sf_{L} }_{L^{3.25}(B_{
R'})}&\lesssim
(K_L)^{-1+\frac{2}{3.25}}\|E_{S_2}g\|_{L^p(B_{R'})}
\\& \lesssim C_d C_{\epsilon,d,\lambda}(K_L)^{-1+\frac{2}{3.25}}(R')^{\epsilon}\|g\|_{L^2}^{1-\lambda}\|g\|_{L^{\infty}}^{\lambda}
\\&\lesssim C_d C_{\epsilon,d,\lambda}(K_L)^{(-\frac12-\frac{\lambda}{2}+\frac{2}{3.25})}(R')^{\epsilon}\|f_{L}\|_{2}^{1-\lambda}\|f_{L}\|_{\infty}^{\lambda}.
\end{split}
\end{equation}
The last inequality follows from $\|g\|_{{\infty}}=\|f_{L}\|_{{\infty}}$ and $\|g\|_{2}=(K_L)^{\frac12}\|f_L\|_{2}$.
This completes the proof of Lemma \ref{striprescaling}.
\end{proof}

\subsection{Bad pairs}\label{0522subsection2.2}
Let $\zeta$ be a point on $[-1, 1]^2$.
We define an affine transformation $T_{\zeta}$ associated to $\zeta$ in the following way. First, let $T_{\zeta, 1}$ be the action of translation that sends $\zeta$ to the origin. Consider the resulting polynomial 
\begin{equation}
(h \circ T_{\zeta, 1}^{-1})(\xi,\eta)=\sum_{i=0}^{d}\sum_{j=0}^{i}a'_{i,j}\xi^{i-j}\eta^j, 
\end{equation}
for some new coefficient $a'_{i, j}$. We examine its quadratic terms. Via simple computation, it is not difficult to see that 
\begin{equation}
    |a'_{2, 0}|+ |a'_{2, 2}|\lesim \epsilon_0, \ \  |a'_{2, 1}-1| \lesim \epsilon_0.
\end{equation}
Therefore, one can find a linear transformation, called $T_{\zeta, 2}$, which is a small perturbation of the identity map, such that 
\begin{equation}
(h \circ T_{\zeta,1}^{-1}\circ T_{\zeta, 2}^{-1})(\xi,\eta)=\sum_{i=0}^{d}\sum_{j=0}^{i}a''_{i,j}\xi^{i-j}\eta^j, \;\; a_{2,1}''=1,\;\; a''_{2,0}=a''_{2,2}=0
\end{equation}
for some coefficients $a''_{i,j}$. Define 
\begin{equation}\label{0422e2.12}
T_{\zeta}:=T_{\zeta, 2}\circ T_{\zeta, 1}. 
\end{equation}
We will use the map $T_{\zeta}$ frequently throughout the paper. {Define a polynomial $h_{\zeta}$, associated to the point $\zeta$, by }
\beq\label{noqudratic}
h_{\zeta}(\xi,\eta):=\xi\eta+\sum_{i=3}^{d}\sum_{j=0}^i a''_{i,j}\xi^{i-j}\eta^j.
\endeq

Now we consider two points $\zeta_1, \zeta_2$ with $|\zeta_1-\zeta_2|>K_L^{-1}$. Denote by $\overline{\zeta_2}=(\overline{\zeta_{2,1}}, \overline{\zeta_{2,2}})\in \R^2$ the image of $\zeta_2$ under the map $T_{\zeta_1}$. Note that $|\overline{\zeta_2}|>(2K_L)^{-1}$. Let us consider the case that $|\overline{\zeta_{2,1}}|\ge |\overline{\zeta_{2,2}}|$. Under this assumption, it is easy to see that 
\beq\label{secondderivativelowerbound}
|\partial_{2} h_{\zeta_1}(\overline{\zeta_{2}})| \geq \frac{1}{3K_L} \text{ \, and \, } |\partial_{2} h_{\zeta_1}(\overline{\zeta_{2}})|\ge
\frac12{|\partial_{1} h_{\zeta_1}(\overline{\zeta_{2}})|}. 
\endeq 
Here we have used the separation assumption on $\zeta_1$ and $\zeta_2$. 
We define an auxiliary function associated to the pair $(\zeta_1,\zeta_2)$ by
\beq\label{badpairvariety}
P_{\zeta_1,\zeta_2}(\xi,\eta):=
\begin{vmatrix}
\partial_1h_{\zeta_1}(\xi,\eta) & \partial_2h_{\zeta_1}(\xi,\eta)
\\
\partial_1h_{\zeta_1}(\overline{\zeta_{2}}) & \partial_2h_{\zeta_1}(\overline{\zeta_{2}})
\end{vmatrix}.
\endeq
Note that $P_{\zeta_1,\zeta_2}(0)=P_{\zeta_1,\zeta_2}(\overline{\zeta_2})=0$.
Note also that
\begin{equation}\label{badpairvariety2}
    P_{\zeta_1,\zeta_2}(\xi,\eta)=
\begin{vmatrix}
\partial_1h_{\zeta_1}(T_{\zeta_1}(\zeta_1)) & \partial_2h_{\zeta_1}(T_{\zeta_1}(\zeta_1)) & -1
\\
\partial_1h_{\zeta_1}(T_{\zeta_1}(\zeta_2)) & \partial_2h_{\zeta_1}(T_{\zeta_1}(\zeta_2)) & -1
\\
\partial_1h_{\zeta_1}(\xi,\eta) & \partial_2h_{\zeta_1}(\xi,\eta) & -1
\end{vmatrix}.
\end{equation}
Set the zero set of this polynomial to be 
\begin{equation}
    \mc{Z}_{\zeta_1, \zeta_2}:=\{(\xi, \eta): P_{\zeta_1, \zeta_2}(\xi, \eta)=0\}.
\end{equation}
Notice that $\mc{Z}_{\zeta_1, \zeta_2}$ contains both the origin and $\overline{\zeta_2}$. From \eqref{secondderivativelowerbound}, we see that 
\begin{equation}\label{0422e2.18}
    |\partial_2 P_{\zeta_1,\zeta_2}(\xi,\eta)| \ge \frac{1}{4K_L} \text{ \,  and \,  }
    |\partial_2 P_{\zeta_1,\zeta_2}(\xi,\eta)|\ge \frac13{|\partial_1 P_{\zeta_1,\zeta_2}(\xi,\eta)|},
\end{equation}
for every $(\xi,\eta) \in [-4,4]^2$. Let us give more details on the second inequality. By the definition \eqref{badpairvariety},
\begin{equation}
    \begin{split}
        &\partial_2P_{\zeta_1,\zeta_2}(\xi,\eta)=\partial_{12}h_{\zeta_1}(\xi,\eta)\partial_2h_{\zeta_1}(\overline{\zeta_2})-\partial_{22}h_{\zeta_1}(\xi,\eta)\partial_1h_{\zeta_1}(\overline{\zeta_2})
        \\&\partial_1P_{\zeta_1,\zeta_2}(\xi,\eta)=\partial_{11}h_{\zeta_1}(\xi,\eta)\partial_2h_{\zeta_1}(\overline{\zeta_2})-\partial_{12}h_{\zeta_1}(\xi,\eta)\partial_{1}h_{\zeta_1}(\overline{\zeta_2}).
    \end{split}
\end{equation}
The second inequality of \eqref{0422e2.18} now follows by \eqref{secondderivativelowerbound}.
By the implicit function theorem, we can write $\mc{Z}_{\zeta_1, \zeta_2}\cap [-4, 4]^2$ as 
\begin{equation}
    \{(\xi, \eta_{{\zeta_1,\zeta_2}}(\xi)): \xi\in [-4, 4]\},
\end{equation}
for some smooth function $\eta_{\zeta_1,\zeta_2}$ with $\eta_{\zeta_1,\zeta_2}(0)=0$ and $\eta_{\zeta_1,\zeta_2}(\overline{\zeta_{2,1}})=\overline{\zeta_{2,2}}$. Moreover, from \eqref{0422e2.18} we can conclude that\footnote{We postpone the details to \eqref{dervativeofeta} in order not to interrupt the discussion.}
\begin{equation}
    |\eta'_{\zeta_1,\zeta_2}(\xi)|\lesssim 1, \text{ for every } \xi. 
\end{equation}
Define the function $H_{\zeta_1,\zeta_2}(\xi)$ by
\begin{equation}
H_{\zeta_1, \zeta_2}(\xi):=h_{\zeta_1}(\xi,\eta_{\zeta_1,\zeta_2}(\xi))
.
\end{equation}
Define two bad sets
\begin{equation}\label{353}
    \begin{split}
        &\mathbb{B}_1(\zeta_1,\zeta_2):=\{(\xi,\eta_{\zeta_1,\zeta_2}(\xi))\in [-4,4]^2 : |\eta_{\zeta_1,\zeta_2}'(\xi)-\eta_{\zeta_1,\zeta_2}'(0)| \leq K_1^{-1}  \},
    \\
    &\mathbb{B}_2(\zeta_1,\zeta_2):=\{(\xi,\eta_{\zeta_1,\zeta_2}(\xi))\in [-4,4]^2:  |H'_{\zeta_1,\zeta_2}(\xi)| \leq K_1^{-1}  \}.
    \end{split}
\end{equation}
Now we have all the notions we need to define bad pairs. We say that $(\zeta_1,\zeta_2)$ is \textit{a bad pair}
if 
\begin{equation}
    \overline{\zeta_2} \in \mathbb{B}(\zeta_1,\zeta_2),
\end{equation}
where 
\begin{equation}
    \mathbb{B}(\zeta_1, \zeta_2):=\mathbb{B}_1(\zeta_1,\zeta_2) \cap \mathbb{B}_2(\zeta_1,\zeta_2).
\end{equation}
Otherwise we say that $(\zeta_1,\zeta_2)$ is \textit{a good pair}. Define a bad pair and a good pair similarly in the case that $|\overline{\zeta_{2,1}}|\leq |\overline{\zeta_{2,2}}|$.  Note also that even if $(\zeta_1,\zeta_2)$ is a good pair, according to our definition, $(\zeta_2,\zeta_1)$ may not be a good pair. 

Suppose that $\tau_1,\tau_2$ are squares with side length $K^{-1}$ and $\mathrm{dist}(\tau_1,\tau_2)\geq 2K_L^{-1}$. We say that $(\tau_1,\tau_2)$ is a bad pair if $(\zeta_1,\zeta_2)$ is a bad pair for every $\zeta_i \in 2\tau_i$. Otherwise, we say that $(\tau_1,\tau_2)$ is a good pair. 
\\

The total number of bad pairs could be enormous. However, under some circumstance, we are able to describe their distribution. The following lemma plays a crucial role in the proof of Theorem \ref{broadfunctionestimate}.

\begin{lem}\label{badpairlemma}
Fix sufficiently small $\epsilon_0>0$ and $\epsilon>0$. Let $\zeta_1,\zeta_2$ be points in $[-1,1]^2$ with $|\zeta_1-\zeta_2| >K_L^{-1}$.  
For sufficiently large dyadic numbers $K_L, K_{d},\ldots,K_1,K$ with 
\begin{equation}\label{0522e2.56}
    K_L=:K_{d+1} \ll K_{d} \ll \cdots \ll  K_1 \ll K_0 :=K
\end{equation}
the following holds true: Either there exist squares $\upsilon_{j,i} \in \mathcal{P}(K_j^{-1},[-2,2]^2)$ such that
    \begin{equation}
    \mathbb{B}(\zeta_1,\zeta_2)
    \subset 
    \Big(
    \bigcup_{i=1}^{d^2}T_{\zeta_1}(\upsilon_{d+1,i})\Big) \cup
    \Big(\bigcup_{j=2}^{d-1} \bigcup_{i=1}^{O(K_{j+1}^{3}) }T_{\zeta_1}(\upsilon_{j,i})\Big) 
    \end{equation}
or there exist bad strips $L_j$ such that
\begin{equation}
\mathcal{N}_{K_L^{-2}}\big(\mathbb{B}(\zeta_1,\zeta_2) \big) \subset \bigcup_{j=1}^{2} T_{\zeta_1}(L_j).
\end{equation}
The squares $\upsilon_{j,i}$ and the strips $L_j$ depend on a choice of  $\zeta_1$ and $\zeta_2$.
\end{lem}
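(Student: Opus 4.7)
The plan is to view $\mathbb{B}(\zeta_1,\zeta_2)$ as a subset of the smooth branch of $\mathcal{Z}_{\zeta_1,\zeta_2}$ parametrized by $\xi\mapsto(\xi,\eta_{\zeta_1,\zeta_2}(\xi))$, recast the two defining conditions as polynomial sub-level sets, and then run an algebraic dichotomy: either $P_{\zeta_1,\zeta_2}$ shares a relevant linear factor with the polynomials encoding the two constraints (bad strip case) or it does not (small square cover by intersection count).

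The first step is to translate the conditions defining $\mathbb{B}_1$ and $\mathbb{B}_2$ into bivariate polynomial sub-level sets. Differentiating $P_{\zeta_1,\zeta_2}(\xi,\eta(\xi))=0$ yields $\eta'(\xi)=-\partial_1 P/\partial_2 P$, so setting
\begin{equation*}
\tilde{G}_1(\xi,\eta):=\partial_1 P(\xi,\eta)\,\partial_2 P(0,0)-\partial_1 P(0,0)\,\partial_2 P(\xi,\eta),
\end{equation*}
a bivariate polynomial of degree $\leq d-2$, the condition defining $\mathbb{B}_1$ is equivalent, via the lower bound $|\partial_2 P|\geq (4K_L)^{-1}$ from \eqref{0422e2.18}, to $|\tilde{G}_1(\xi,\eta(\xi))|\leq C K_L^{-2}K_1^{-1}$. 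Similarly, clearing denominators in $H'(\xi)=\partial_1 h_{\zeta_1}+\eta'(\xi)\,\partial_2 h_{\zeta_1}$ produces a bivariate polynomial $\tilde{G}_2$ of degree $O(d)$, so that the condition defining $\mathbb{B}_2$ becomes $|\tilde{G}_2(\xi,\eta(\xi))|\leq C K_L^{-1}K_1^{-1}$.

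The second step is the dichotomy. In the bad strip case, suppose $P_{\zeta_1,\zeta_2}$ has a linear factor $L$ that also divides both $\tilde{G}_1$ and $\tilde{G}_2$ modulo the above tolerances. Along $\{L=0\}$, the slope $\eta'$ is then identically $\eta'(0)$ and $H$ is nearly constant, which forces $h_{\zeta_1}$ itself to be constant along this line up to a $K_L^{-1}$-tolerance. Pulling back through $T_{\zeta_1}^{-1}$ and aligning $\{L=0\}$ with the $\xi$-axis via the rotation-translation $M_l$ introduced after \eqref{0421e2.2}, a direct coefficient computation shows that all coefficients $c_{i,0}$ of the transformed polynomial $h\circ M_l^{-1}$ satisfy the size bound in \eqref{0421e2.3}, so that this line is bad. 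Since each of the conditions $\mathbb{B}_1,\mathbb{B}_2$ can contribute at most one such linear approximation, $\mathcal{N}_{K_L^{-2}}(\mathbb{B})$ lies in the union $T_{\zeta_1}(L_1)\cup T_{\zeta_1}(L_2)$ of at most two bad strips.

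If no relevant common linear factor exists, the plan is to cover $\mathbb{B}$ scale-by-scale through multi-scale sub-level set estimates for polynomials restricted to the parametrization $\xi\mapsto(\xi,\eta(\xi))$. At the finest scale $K_L^{-1}$, a direct Bezout bound among $\mathcal{Z}$, $\{\tilde{G}_1=0\}$, and $\{\tilde{G}_2=0\}$ bounds the transverse intersections by $O(d^2)$, each absorbed by one $K_L^{-1}$-square. At an intermediate scale $K_j^{-1}$ for $j=2,\ldots,d-1$, I would expand $\tilde{G}_1,\tilde{G}_2$ in Taylor series in $\xi$, isolate the contributions of the coefficients $a''_{i,j}$ of $h_{\zeta_1}$ at order $\leq j+1$, and apply a standard polynomial root-count combined with the simultaneous condition $|\tilde{G}_1|,|\tilde{G}_2|\lesssim K_1^{-1}$; this should yield $O(K_{j+1}^3)$ covering squares of side $K_j^{-1}$ at each such scale.

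The main obstacle is the coefficient-vanishing verification in the bad strip case: one must track precisely how the coefficients of $h$ transform under both $T_{\zeta_1}$ and the further alignment $M_l$, and confirm that the tolerance $c_1(K_L^{-1})=10^{-10d}\epsilon_0 K_L^{-1}$ is compatible with the threshold $10^{-5d}\epsilon_0 K_L^{-1}$ demanded by \eqref{0421e2.3}. The multi-scale counting in the generic case is technical but is governed by standard root-count estimates for univariate polynomials of bounded degree.
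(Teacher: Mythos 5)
Your overall skeleton (recast the defining conditions of $\mathbb{B}_1,\mathbb{B}_2$ algebraically, then split into a bad-strip alternative and a small-square cover) is in the right spirit, but the dichotomy you run and the tools you reach for do not match the actual structure of the problem, and I do not see how to repair them without replacing the argument wholesale.

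First, the main dichotomy is not "does $P_{\zeta_1,\zeta_2}$ share a linear factor with $\tilde G_1,\tilde G_2$ modulo tolerance?" That condition is not well-defined ($P$ need not have any linear factor, and ``divides modulo a tolerance'' has no clean meaning for polynomials), and it is not exhaustive. The paper's actual first split is analytic, not algebraic: using \eqref{0423e2.28} and the chain rule identity $|H'(\xi)|=\big|\partial_2 h_{\zeta_1}(\xi,\eta_0(\xi))\big(\eta_0'(\xi)+\eta_0'(0)\big)\big|$, the membership $(\xi,\eta_0(\xi))\in\mathbb{B}$ forces $|H'|\le K_1^{-1}$ while $|\eta_0'(\xi)-\eta_0'(0)|\le K_1^{-1}$, so $|\partial_2 h_{\zeta_1}|\cdot|\eta_0'(0)|\lesssim K_1^{-1}$ on all of $\mathbb{B}$. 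This gives the dichotomy $|\eta_0'(0)|\gtrless K_1^{-1/2}$. In the large case, $\partial_1 h_{\zeta_1}$ and $\partial_2 h_{\zeta_1}$ are both $O(K_1^{-1/2})$ on $\mathbb{B}$, so $\mathbb{B}$ sits in one $K_2^{-1}$-square. In the small case, $|\partial_1 h_{\zeta_1}|\lesssim K_1^{-1/2}$ and, via \eqref{derivativeofeta}, $|\partial_{11}h_{\zeta_1}|\lesssim K_2^{-1}$ on $\mathbb{B}$, and the \emph{second} dichotomy is on the size of $\max_{i\ge 3}|a_{i,0}''|$ relative to $K_L^{-2}$ — i.e., whether $\partial_1 h_{\zeta_1}(\cdot,0)$ is essentially zero as a one-variable polynomial. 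Your proposal has no analogue of either of these reductions, and without them there is no handle on $\mathbb{B}$.

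Second, the bad-strip alternative: in the paper it arises only when $\max_{i\ge 3}|a_{i,0}''|\le K_L^{-2}$, in which case $\mathcal{D}_1=\{|\partial_1 h_{\zeta_1}|\lesssim K_1^{-1/2}\}$ lies in a $O(K_L^{-2})$-neighborhood of $\{\eta=0\}$, and that axis (in $T_{\zeta_1}$-coordinates) is then \emph{verified} to be a bad line because the relevant coefficients $c_{i,0}$ of $h\circ M_l^{-1}$ inherit the smallness $\lesssim K_L^{-2}\ll 10^{-5d}\epsilon_0 K_L^{-1}$ of \eqref{0421e2.3}. Two bad strips appear because the exact axis generically falls between two lines of the $c_1(K_L^{-1})$-grid, not because ``each of $\mathbb{B}_1,\mathbb{B}_2$ contributes one linear approximation,'' and the condition being checked is that $h$ restricted to the line is nearly \emph{linear} (vanishing higher Taylor coefficients), not nearly \emph{constant}.

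Third, the covering by small squares cannot be reduced to Bezout. Bezout bounds common zeros of polynomials; $\mathbb{B}$ is a sub-level set, and the required output is a multi-scale cover, with $O(K_{j+1}^3)$ squares of side $K_j^{-1}$ for \emph{each} $j=2,\dots,d-1$ plus $d^2$ squares of side $K_{d+1}^{-1}$. The paper achieves this through the iterated inclusion $\bigcap_{j\le k}\mathcal{D}_j\subset\bigcap_{j\le k+1}\mathcal{D}_j\cup\big(\bigcap_{j\le k}\mathcal{D}_j\cap\mathcal{D}_{k+1}^c\big)$ with $\mathcal{D}_j:=\{|\partial_1^j h_{\zeta_1}(\xi,\eta_0(\xi))|\le K_j^{-1}\}$, proving via Taylor expansion that points of $\bigcap_{j\le k}\mathcal{D}_j\cap\mathcal{D}_{k+1}^c$ are $K_{k+1}K_k^{-1}$-clustered and $K_{k+1}^{-1}$-separated, and only at the very last scale invoking a one-variable sub-level-set estimate (Lemma \ref{onepolynomiallemma}) together with the lower bound $\|\partial_1 h_{\zeta_1}(\cdot,0)\|\gtrsim K_L^{-2}$ available in Case 2.2. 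Your sketch at the intermediate scales (``isolate the coefficients $a_{i,j}''$ of order $\le j+1$ and apply a root count'') does not contain this transversality/separation mechanism and would not deliver the quantitative bound $O(K_{j+1}^3)$ you claim.
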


Note that for each $j$ the number of the squares $\upsilon_{j,i}$ is $O(K_{j+1}^3)$. Even though $K_{j+1}^3$ is a large number compared to one, it is still very small compared to $K_j$. Hence, the number of squares will not cause any trouble. 

To prove this lemma, we will consider several cases. In some cases, we will reduce our problem to a problem involving a one-variable polynomial and apply Lemma \ref{onepolynomiallemma}.
Recall that for a polynomial $p(\xi)$, $\|p\|$ denote the supremum of all its coefficients.

\begin{lem}\label{onepolynomiallemma}
Let $n \geq 1$.
For every polynomial
\begin{equation}
p(\xi)=b_0+b_1\xi+\cdots+b_n\xi^n
\end{equation}
with $\|p\|=1$
and
for every $0< \gamma \lesim 1$, there exist points $\{\xi_i\}_{i=1}^{{n(n+1)}/{2}}$ such that
\begin{equation}\label{2.26}
\big\{\xi \in [-4,4]:|p(\xi)|<\gamma\big\} \subset \bigcup_{i=1}^{\frac{n(n+1)}{2}} \Big([\xi_i-4\gamma^{\frac{1}{2^n}},\xi_i+4\gamma^{\frac{1}{2^n}}]\Big).
\end{equation} 
\end{lem}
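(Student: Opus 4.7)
The plan is to prove the lemma by induction on the degree $n$, splitting at each inductive step into two cases according to the size of the leading coefficient $b_n$.

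For the base case $n = 1$, the polynomial $p(\xi) = b_0 + b_1 \xi$ satisfies $\max(|b_0|, |b_1|) = 1$. If $|b_1| \geq \gamma^{1/2}$, then $\{|p| < \gamma\}$ is a single interval of half-length at most $\gamma/|b_1| \leq \gamma^{1/2}$ centered at $-b_0/b_1$. Otherwise $|b_1| < \gamma^{1/2}$ forces $|b_0|=1$, and on $[-4, 4]$ we have $|p(\xi)| \geq 1 - 4\gamma^{1/2}$, which exceeds $\gamma$ once $\gamma$ is sufficiently small, making the bad set empty. In either case one interval suffices, matching $n(n+1)/2 = 1$.

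For the inductive step, assume the statement for all degrees less than $n$ and consider first the case $|b_n| \geq \gamma^{1/2}$. Factor $p(\xi) = b_n \prod_{j=1}^n (\xi - \alpha_j)$ over $\mathbb{C}$. For real $\xi$ one has $|\xi - \alpha_j| \geq |\xi - \mathrm{Re}(\alpha_j)|$, hence $|p(\xi)| \geq |b_n| \prod_j |\xi - \mathrm{Re}(\alpha_j)|$. If $|p(\xi)| < \gamma$, then $\prod_j |\xi - \mathrm{Re}(\alpha_j)| < \gamma^{1/2}$, so by pigeonhole at least one factor is less than $\gamma^{1/(2n)}$. This covers the bad set by $n$ intervals of half-length $\gamma^{1/(2n)}$; since $2n \leq 2^n$ for all $n \geq 1$ and $\gamma \leq 1$, this is bounded by $\gamma^{1/2^n}$, and $n \leq n(n+1)/2$.

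If instead $|b_n| < \gamma^{1/2}$, set $\tilde p(\xi) := p(\xi) - b_n \xi^n$, which has degree at most $n-1$; since $|b_n| < 1$, the maximum of its coefficients is still $1$. For $\xi \in [-4, 4]$, $|p(\xi)| < \gamma$ implies $|\tilde p(\xi)| < \gamma + 4^n \gamma^{1/2} \lesim \gamma^{1/2}$ with an implicit constant depending only on $n$. Applying the induction hypothesis to $\tilde p$ with threshold $\tilde \gamma := C_n \gamma^{1/2}$, which remains $\lesim 1$ provided $\gamma$ is small enough depending on $n$, yields a cover by $(n-1)n/2$ intervals of half-length $O_n(\tilde \gamma^{1/2^{n-1}}) = O_n(\gamma^{1/2^n})$. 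Combined with $(n-1)n/2 \leq n(n+1)/2$, this closes the induction. The main obstacle is the bookkeeping of the $n$-dependent constants through the recursion; these constants are absorbed into the factor $4$ by restricting to $\gamma$ smaller than an $n$-dependent threshold, which is consistent with the hypothesis $\gamma \lesim 1$.
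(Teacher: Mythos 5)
Your approach is genuinely different from the paper's: the paper's proof rests on a derivative-based dichotomy (either $|p_0'|\le 2\sigma$ or $\xi\in\mathcal{N}_\sigma(\mathcal{Z}(p_0))$, iterated on successive derivatives), while you recurse on the degree via a dichotomy on the size of the leading coefficient $b_n$, using factorization and pigeonhole in the large-$|b_n|$ case and truncation to $\tilde p = p - b_n\xi^n$ in the small-$|b_n|$ case. Both correctly produce the exponent $1/2^n$ from taking one square root per recursion step, and both stay within the interval count $n(n+1)/2$; your argument is arguably cleaner and entirely elementary.

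There is, however, a concrete flaw in your handling of the constant. In the case $|b_n|<\gamma^{1/2}$, the induction hypothesis applied to $\tilde p$ at threshold $\tilde\gamma = C_n\gamma^{1/2}$ (with, say, $C_n = 1+4^n$) produces intervals of half-length $4\tilde\gamma^{1/2^{n-1}} = 4\,C_n^{1/2^{n-1}}\gamma^{1/2^n}$. Your claim that the factor $C_n^{1/2^{n-1}}>1$ can be ``absorbed into the factor $4$ by restricting $\gamma$ smaller'' is false: both $C_n^{1/2^{n-1}}\gamma^{1/2^n}$ and $\gamma^{1/2^n}$ scale identically in $\gamma$, so no choice of threshold on $\gamma$ removes the extra multiplicative constant. (Already for $n=2$ the constant is $17^{1/2}\approx 4.12$, slightly exceeding $4$.) Covering each larger interval by several intervals of half-length $4\gamma^{1/2^n}$ does not close the gap either, because the resulting interval count exceeds $n(n+1)/2$ for small $n$. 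What your recursion actually proves is the lemma with $4$ replaced by some degree-dependent constant $C_n$, which is all that the application in the paper uses (the exact value $4$ is immaterial there); but to prove the statement literally as written, you would need to restate the induction hypothesis with a variable constant and tune the threshold on $|b_n|$ at each step, rather than appealing to a restriction on $\gamma$ that cannot do the job.
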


This lemma is a corollary of Lemma 2.14 in \cite{arxiv:1902.03450}. We will give a proof for the completeness of the paper.

\subsubsection*{Proof of Lemma \ref{onepolynomiallemma}}
Let us use the notation $I=[-4,4]$ for simplicity.
We claim that for every $\sigma>0$ and polynomial $p_0$ with $\|p_0''\|_{L^{\infty}(I)} \leq 1$  the following holds true:
\begin{equation}\label{2.27}
\begin{split}
& \{\xi \in I:|p_0(\xi)| \leq \sigma^2\} \subset \{\xi \in I :|p_0'(\xi)| \leq  2\sigma\} \cup \mc{N}_{\sigma} (\mc{Z}(p_0)),
\end{split}
\end{equation}
where $\mc{Z}({p_0})$ denotes the zero set of $p_0$. We assume that this claim holds true for a moment and deduce \eqref{2.26}.
Note that if  $\|p' \| \ll 1$, then since the constant term is $\gtrsim 1$, the left hand side of \eqref{2.26} is empty for sufficiently small $\gamma>0$. Hence, we may assume that $\|p'\| \simeq 1$.
By the fundamental theorem of algebra, it is not difficult to see that
the lemma follows by applying the claim repeatedly.

It remains to prove this claim.  First note that if $|p_0(\xi)| \leq \sigma^2$ and $|p_0'(\xi)| > 2\sigma$ then
\beq\label{taylorinequality}
|p_0(\xi)|-\sigma|p_0'(\xi)|+\frac12{\sigma^2\|p_0''\|_{L^{\infty}(I)}} < 0.
\endeq
Our goal to show that in this case $\mathrm{dist}(\xi, \mc{Z}(p_0)) \leq \sigma$.
Without lose of generality, we may assume that $p_0(\xi)>0$.
Suppose that $\mathrm{dist}(\xi, \mc{Z}(p_0)) > \sigma$ for a moment and deduce the contradiction. We consider the case that $p_0'(\xi)>0$. By Taylor's theorem,
\begin{equation}
0< p_0(\xi-\sigma) \leq p_0(\xi)-\sigma p_0'(\xi) +\frac12{\sigma^2 \|p_0''\|_{L^{\infty}(I)}}.
\end{equation}
This contradicts with \eqref{taylorinequality}. We consider the other case that $p_0'(\xi)<0$. By Taylor's theorem,
\begin{equation}
0< p_0(\xi+\sigma) \leq p_0(\xi)+\sigma p_0'(\xi) +\frac12{\sigma^2 \|p_0''\|_{L^{\infty}(I)}}.
\end{equation}
This also contradicts \eqref{taylorinequality}, and therefore completes the proof of Lemma \ref{onepolynomiallemma}.
\qed

\medskip

\subsubsection*{Proof of Lemma \ref{badpairlemma}}
{Recall the map $T_{\zeta_1}$ from \eqref{0422e2.12}. Denote by $\overline{\zeta_2}=(\overline{\zeta_{2,1}}, \overline{\zeta_{2,2}})$ the image of ${\zeta_2}$ under $T_{\zeta_1}$. Without loss of generality, we assume that $|\overline{\zeta_{2,1}}|\ge |\overline{\zeta_{2,2}}|$. }
Throughout the proof, for the sake of simplicity, $h_{\zeta_1}$ will be abbreviated to $h_0$, $H_{\zeta_1, \zeta_2}$  simply to $H$, $\eta_{\zeta_1,\zeta_2}$ to $\eta_0$, and $\mathbb{B}(\zeta_1, \zeta_2)$ to $\mathbb{B}$. 

Let us first compute the derivative of $\eta_{0}(\xi)$.
Recall that the function $\eta_{0}(\xi)$ is implicitly defined by
\beq\label{varietyequality}
P_{\zeta_1,\zeta_2}(\xi,\eta_0(\xi))=\partial_1 h_0(\xi,\eta_0(\xi))\partial_2 h_0(\overline{\zeta_2})-\partial_2 h_0(\xi,\eta_0(\xi))\partial_1 h_0(\overline{\zeta_2})=0.
\endeq
We take a derivative with respect to $\xi$ and obtain
\begin{equation}
\begin{split}
    &\partial_{11}h_0(\xi,\eta_0(\xi))\partial_2h_0(\overline{\zeta_2})+
    \partial_{12}h_0(\xi,\eta_0(\xi))
    \eta_0'(\xi)
    \partial_2h_0(\overline{\zeta_2})
    \\&
    -\partial_{12}h_0(\xi,\eta_0(\xi))\partial_1h_0(\overline{\zeta_2})-\partial_{22}h_0(\xi,\eta_0(\xi))\eta_0'(\xi) \partial_1h_0(\overline{\zeta_2})=0.
\end{split}
\end{equation}
By rearranging terms, we obtain
\beq\label{derivativeofeta}
\eta_0'(\xi)=\frac{\partial_{12}h_0(\xi,\eta_0(\xi))\partial_1h_0(\overline{\zeta_2})-\partial_{11}h_0(\xi,\eta_0(\xi))\partial_2h_0(\overline{\zeta_2}) }{\partial_{12}h_0(\xi,\eta_0(\xi))\partial_2h_0(\overline{\zeta_2})-\partial_{22}h_0(\xi,\eta_0(\xi))\partial_1h_0(\overline{\zeta_2})}.
\endeq
Note that 
\begin{equation}\label{0423e2.28}
\eta_0'(0)=\frac{\partial_1h_0(\overline{\zeta_2})}{\partial_2h_0(\overline{\zeta_2})}.
\end{equation}
Moreover, by taking derivatives repeatedly to \eqref{varietyequality}, it is not difficult to show that
\begin{equation}\label{dervativeofeta}
    |\eta_0^{(j)}(\xi)| \lesssim 1
    \text{ for every } \xi \in [-4,4] 
\end{equation}
for every $j=1,\ldots,d$.
By the chain rule and \eqref{varietyequality}, we obtain
\begin{equation}\label{0422e2.28}
\begin{split}
|H'(\xi)|&=|\partial_1h_0(\xi,\eta_0(\xi))+\partial_2h_0(\xi,\eta_0(\xi))\eta_0'(\xi)|
\\& 
=
\Big|\partial_2h_0(\xi,\eta_0(\xi))\big(\eta_0'(\xi)+\frac{\partial_1h_0(\overline{\zeta_2})}{\partial_2h_0(\overline{\zeta_2})}\big)\Big|.
\end{split}
\end{equation}
For $(\xi,\eta_0(\xi)) \in \mathbb{B}$, we apply the triangle inequality and obtain 
\beq\label{h'lowerbound}
\begin{split}
|H'(\xi)|
&\geq \Big|\partial_2h_0(\xi,\eta_0(\xi))\big(\eta_0'(0)+\frac{\partial_1h_0(\overline{\zeta_2})}{\partial_2h_0(\overline{\zeta_2})}\big)\Big|-K_1^{-1}
\\&
=
2\Big|\partial_2h_0(\xi,\eta_0(\xi))\big( \frac{\partial_1h_0(\overline{\zeta_2})}{\partial_2h_0(\overline{\zeta_2})}\big) \Big|-K_1^{-1}.
\end{split}
\endeq
Depending on the size of $\big|\frac{\partial_1h_0(\overline{\zeta_2})}{\partial_2h_0(\overline{\zeta_2})}\big|$, we have two cases
\begin{equation}\label{0422_case_distinction}
  \Big|\frac{\partial_1h_0(\overline{\zeta_2})}{\partial_2h_0(\overline{\zeta_2})}\Big| > {K_1^{-1/2}} \text{\; or\; }  \Big|\frac{\partial_1h_0(\overline{\zeta_2})}{\partial_2h_0(\overline{\zeta_2})}\Big| \le {K_1^{-1/2}}.
\end{equation}
The quantity that appears in the above display has a geometric meaning, see \eqref{0423e2.28}. 

\medskip

\subsubsection*{Case 1.} $\Big|\frac{\partial_1h_0(\overline{\zeta_2})}{\partial_2h_0(\overline{\zeta_2})}\Big| > {K_1^{-1/2}}$.

In this case, by \eqref{353} and \eqref{h'lowerbound}, for $(\xi,\eta_0(\xi)) \in \mathbb{B}$ we obtain
\begin{equation}
2K_1^{-1/2}|\partial_2 h_0(\xi,\eta_0(\xi))| - K_1^{-1}
 < |H'(\xi)| \leq K_1^{-1}.
\end{equation}
Multiplying $K_1^{1/2}$ on both sides we obtain
\begin{equation}
|\partial_2h_0(\xi,\eta_0(\xi))| < K_1^{-1/2},
\end{equation}
which certainly means that 
\begin{equation}\label{2.37}
\mathbb{B} \subset
\{(\xi,\eta_0(\xi)) \in \mathbb{B}: |\partial_2h_0(\xi,\eta_0(\xi))|<K_1^{-1/2} \}.
\end{equation}
By the lower bound \eqref{secondderivativelowerbound} and the equality \eqref{varietyequality}, for every  $(\xi,\eta_0(\xi)) \in \mathbb{B}$, we obtain 
\begin{equation}
    |\partial_1h_0(\xi,\eta_0(\xi))|<2 K_1^{-1/2}.
\end{equation}
By choosing $K_2 \ll K_1$ we can take $\upsilon \in \mathcal{P}(K_2^{-1},\R^2)$ such that $T_{\zeta_1}(\upsilon)$ contains the set on the right hand side of \eqref{2.37}.
This completes the discussion of Case 1.

\subsubsection*{Case 2.}
$\Big|\frac{\partial_1h_0(\overline{\zeta_2})}{\partial_2h_0(\overline{\zeta_2})}\Big| \le {K_1^{-1/2}}$.

In this case, by \eqref{varietyequality} and the trivial bound $|\partial_2h_0(\xi,\eta_0(\xi))|\leq 2$,
for every $(\xi,\eta_0(\xi))$, we obtain 
\begin{equation}
|\partial_1h_0(\xi,\eta_0(\xi))| \leq  2K_1^{-1/2}.
\end{equation}
Since $(\xi,\eta_0(\xi)) \in \mathbb{B}$, we obtain from  \eqref{0423e2.28}, our assumption in Case 2, and the triangle inequality that  \begin{equation}
|\eta'_0(\xi)| \leq 2 K_1^{-1/2}.
\end{equation}
This, combined with  \eqref{derivativeofeta}, implies that 
\begin{equation}
    |\partial_{11}h_0(\xi,\eta_0(\xi))|\leq  4 K_1^{-1/2} \leq K_2^{-1}.
\end{equation}
We have shown that
\begin{equation}\label{d1d2}
\begin{split}
    \mathbb{B} \subset 
    \big\{&(\xi,\eta_0(\xi))\in \mathbb{B} :|\partial_1h_0(\xi,\eta_0(\xi))| \leq  2K_1^{-1/2} \big\}
    \\& \cap
    \big\{(\xi,\eta_0(\xi)) \in \mathbb{B}:|\partial_{11}h_0(\xi,\eta_0(\xi))| \leq K_2^{-1} \big\}=:\mathcal{D}_1 \cap \mathcal{D}_2.
\end{split}
\end{equation}
 Depending on the size of $\max_{i=3,\ldots,d}\big({|a_{i,0}''|}\big)$, we have two subcases
\beq\label{424subcase1}
\max_{i=3,\ldots,d}\big({|a_{i,0}''|}\big) \leq
K_L^{-2} \text{    or    }
\max_{i=3,\ldots,d}\big({|a_{i,0}''|}\big) \geq
K_L^{-2}.
\endeq

\subsubsection*{Case 2.1}
$\max_{i=3,\ldots,d}\big({|a_{i,0}''|}\big) \leq
K_L^{-2}$. 

In this case, we have
\begin{equation}\label{2.45}
\sup_{\xi \in [-1,1]}|\partial_1h_0(\xi,0)| \leq d^2K_L^{-2}
\endeq
and since $\partial_1h_0(\xi,\eta)=\eta \cdot (1+O(\epsilon_0))+\partial_1h_0(\xi,0)$  we see that
\begin{equation}\label{2.85}
    \D_1 \subset \CN_{2d^2K_L^{-2}}(\{\eta=0\}).
\end{equation}
Since we are in the former case of \eqref{424subcase1} and the width of a bad strip is $2 \cdot 10^{-10d}\epsilon_0K_L^{-1}$, we can find two bad strips $L_1$ and $L_2$ so that the image of their union under the map $T_{\zeta_1}$ covers the set on the right hand side of \eqref{2.85}. By \eqref{d1d2}, we obtain
\begin{equation}
\mathbb{B} \subset T_{\zeta_1}(L_1) \cup T_{\zeta_1}(L_2).
\end{equation}
This gives the desired result. 

\subsubsection*{Case 2.2.}
$\max_{i=3,\ldots,d}\big({|a_{i,0}''|}\big) \geq
K_L^{-2}$.

Define the sets
\begin{equation}
    \D_j:=\{(\xi,\eta_0(\xi))\in \mathbb{B}: \Big|\frac{\partial^j h_0}{\partial \xi^j}(\xi,\eta_0(\xi))\Big| \leq K_j^{-1}   \}
\end{equation}
for every $j=3,\ldots,d$. Recall that $\D_1$ and $\D_2$ are defined in \eqref{d1d2}. We claim that
\begin{equation}\label{forwarditeration}
    \mathbb{B} \subset \Big(\bigcap_{j=1}^{d}\D_j \Big) \cup
    \Big( \bigcup_{j=2}^{d-1}\bigcup_{i=1}^{O(K_{j+1}^{2})}T_{\zeta_1}(\upsilon_{j,i})   \Big)
\end{equation}
for some $\upsilon_{j,i} \in \mathcal{P}(K_j^{-1},\R^2)$.

Let us prove the claim inductively. Suppose that we have obtained that 
\begin{equation}\label{2.49}
    \mathbb{B} \subset \Big(\bigcap_{j=1}^{k}\D_j \Big) \cup
    \Big( \bigcup_{j=2}^{k-1}\bigcup_{i=1}^{O(K_{j+1}^{2})}T_{\zeta_1}(\upsilon_{j,i})   \Big)
\end{equation}
for some $k \in \{2,\ldots,d-1\}$. Note that the case $k=2$ follows from \eqref{d1d2}.
Note first that
\begin{equation}
\bigcap_{j=1}^{k} \D_j \subset \big( \bigcap_{j=1}^{k+1} \D_j \big) \cup \big(\bigcap_{j=1}^{k} \D_j  \cap (\D_{k+1})^c  \big).
\end{equation}
We will show that
\begin{equation}\label{d1d2d3}
    \bigcap_{j=1}^{k} \D_j  \cap (\D_{k+1})^c
    \subset \bigcup_{i=1}^{O(K_{k+1}^{2})}T_{\zeta_1}(\upsilon_{k,i}).
\end{equation}
This gives the proof of \eqref{2.49} for $k+1$ and by an inductive argument it finishes the proof of the claim \eqref{forwarditeration}.

Let us prove \eqref{d1d2d3}.
Let $(\xi_0,\eta_0(\xi_0))$ be a point belonging to the set on the left hand side of \eqref{d1d2d3}. It is not difficult to see that \eqref{d1d2d3} follows from the following geometric observation: All points $(\xi,\eta_0(\xi)) \in \mathbb{B}$ with 
\begin{equation}
    CK_{k+1}K_{k}^{-1} \leq |\xi-\xi_0| \leq C' K_{k+1}^{-1}
\end{equation}
do not belong to $\bigcap_{j=1}^{k} \D_j  \cap (\D_{k+1})^c$. Here $C, C'$ are two constant that are to be chosen. Let us give more details on why the geometric observation implies \eqref{d1d2d3}. We take $K_{k+1}^{-1}$-separated points on the graph $(\xi,\eta_0(\xi))$. On each point, we apply the geometric observation, and take $O(K_{k+1}^{-2})$ many $K_k^{-1}$-squares inside the ball of radius $CK_{k+1}K_k^{-1}$ centered at the point. This gives \eqref{d1d2d3}. Let us now show the observation. Divide \eqref{varietyequality} by $\partial_2h_0(\overline{\zeta_2})$. By taking derivatives with respect to $\xi$, for every $j \geq 2$, we obtain
\begin{equation}\label{392}
    \frac{\partial^{j-1}}{\partial \xi^{j-1}}\Big( \partial_1 h_0(\xi,\eta_0(\xi))\Big) = 
    \frac{\partial^{j-1}}{\partial \xi^{j-1}}\Big( \partial_2 h_0(\xi,\eta_0(\xi))\Big)\frac{\partial_1h_0(\overline{\zeta_2}) }{\partial_2h_0(\overline{\zeta_2}) }.
\end{equation}
Since we are considering the latter case of \eqref{0422_case_distinction}, the right hand side is bounded by $O(K_1^{-1/2})$. By applying the chain rule to \eqref{392}, we obtain
\begin{equation}
\begin{split}
\frac{\partial^j h_0}{\partial \xi^j}(\xi,\eta_0(\xi))&+\frac{\partial^2 h_0}{\partial \xi \partial \eta}(\xi,\eta_0(\xi))\eta^{(j-1)}_0(\xi)
\\&
+\mathrm{Error}(\xi,\eta_0^{(0)}(\xi),\ldots,\eta_0^{(j-2)}(\xi))=O(K_1^{-1/2}).
\end{split}
\end{equation}
where
\begin{equation}
    |\mathrm{Error}(\xi,\eta_0^{(0)}(\xi),\ldots,\eta_0^{(j-2)}(\xi))| \lesssim \sum_{l=1}^{j-2}|\eta_0^{(l)}(\xi)|.
\end{equation}
Hence, by an inductive argument, we see that
\begin{equation}\label{2.52}
|\eta_0^{(j-1)}(\xi_0)| \simeq
    \Big|\frac{\partial^j h_0}{\partial \xi^j}(\xi_0,\eta_0(\xi_0))\Big|  \leq  K_{j}^{-1}
\end{equation}
and 
\begin{equation}\label{2.96}
|\eta^{(k)}_0(\xi_0)| \simeq
    \Big|\frac{\partial^{k+1} h_0}{\partial \xi^{k+1}}(\xi_0,\eta_0(\xi_0))\Big| > K_{k+1}^{-1}.
\end{equation}
for every  $j=2,\ldots,k$.
By \eqref{2.52} and the previous discussion, it suffices to show that for every point $(\xi,\eta_0(\xi)) \in \mathbb{B}$ with $CK_{k+1}K_k^{-1} \leq |\xi-\xi_0| \leq C'K_{k+1}^{-1}$ it holds that
\begin{equation}\label{2.54}
    |\eta_0^{(k-1)}(\xi)| >CK_k^{-1}.
\end{equation}
Let us prove \eqref{2.54}. By the fundamental theorem of calculus and \eqref{2.52} with $j=k$, we obtain
\begin{equation}\label{secondderivativeofeta}
\begin{split}
    \eta_0^{(k-1)}(\xi)&=\int_{\xi_0}^{\xi}\eta_0^{(k)}(x)\,dx+\eta_0^{(k-1)}(\xi_0)
    \\&
    =\int_{\xi_0}^{\xi}\big(\int_{\xi_0}^{x}\eta^{(k+1)}_0(y)\,dy+\eta^{(k)}_0(\xi_0) \big)\,dx+\eta_0^{(k-1)}(\xi_0)
    \\&=
    \int_{\xi_0}^{\xi}\int_{\xi_0}^{x}\eta_0^{(k+1)}(y)dydx+(\xi-\xi_0)\eta_0^{(k)}(\xi_0)+O(K_k^{-1})
    \\&=
    \int_{\xi_0}^{\xi}(\xi-y)\eta_0^{(k+1)}(y)dy+(\xi-\xi_0)\eta_0^{(k)}(\xi_0)+O(K_k^{-1}).
\end{split}
\end{equation}
We use   \eqref{2.96}, \eqref{secondderivativeofeta}, and a trivial bound \eqref{dervativeofeta}, and obtain
\begin{equation}
    |\eta_0^{(k-1)}(\xi)| \gtrsim 
    K_{k+1}^{-1}|\xi-\xi_0|-c|\xi-\xi_0|^2-cK_k^{-1},
\end{equation}
for some small constant $c$.  It is not difficult to see that if $4cK_{k+1}K_{k}^{-1} \leq |\xi-\xi_0| \leq C'K_{k+1}^{-1}$ then the above lower bound is greater than $2cK_k^{-1}$ and this finishes the proof of \eqref{2.54}, and thus \eqref{2.49} and \eqref{forwarditeration}.
\medskip

It remains to prove that
\begin{equation}\label{backwarditeration}
    \bigcap_{j=1}^{d}\D_j \subset
    \bigcup_{i=1}^{d^2}T_{\zeta_1}(\upsilon_{d+1,i}) 
\end{equation}
for some $\upsilon_{d+1,i} \in \mathcal{P}(K_{d+1}^{-1},\R^2)$. For simplicity, we first write
\begin{equation}\label{uniformconstant}
    \bigcap_{j=1}^{d}\D_j \subset \Big\{(\xi,\eta) \in [-4,4]^2: \max_{j=1,\ldots,d}\Big|\frac{\partial^j h_0}{\partial \xi^j}(\xi,\eta)\Big| \leq K_{d}^{-1}  \Big\}.
\end{equation}
By Taylor's theorem with respect to $\xi$-variable, we obtain
\begin{equation}
    \frac{\partial h_{0}}{\partial \xi}(0,\eta)=
    \sum_{j=1}^{d}(-1)^{j-1}
    \frac{ \xi^{j-1}}{(j-1)!}
    \frac{\partial^j h_0}{\partial \xi^j}(\xi,\eta).
\end{equation}
Thus, the set on the right hand side of \eqref{uniformconstant} is contained in
\begin{equation}
    \Big\{(\xi,\eta) \in [-4,4]^2:
    \max\Big(\Big|\frac{\partial h_0}{\partial \xi}(\xi,\eta)\Big|
    ,\Big|\frac{\partial h_0}{\partial \xi}(0,\eta)\Big|
    \Big) \lesssim K_{d}^{-1}  \Big\}.
\end{equation}
Since $\frac{\partial h_0}{\partial \xi}(0,\eta)=\eta \cdot(1+ O(\epsilon_0))$, if a point $(\xi,\eta)$ belongs to the above set, we obtain $|\eta| \lesssim K_d^{-1}$. Hence, by treating $\eta$ as an error, the above set is contained in
\begin{equation}
    \Big\{(\xi,\eta) \in [-4,4]^2: |\eta| \lesssim K_d^{-1},
    \Big|\frac{\partial h_0}{\partial \xi}(\xi,0)\Big|
    \lesssim K_{d}^{-1}  \Big\}.
\end{equation}
Since we are in the latter case of \eqref{424subcase1}, the norm of the polynomial $\partial_1h_0(\xi,0)$ has a size $\gtrsim K_L^{-2}$ and we apply Lemma \ref{onepolynomiallemma} and obtain \eqref{backwarditeration}. This completes the proof. \qed

\section{Basic setup: Broad function and wave packet decomposition}
 In this section, we  follow the broad-narrow analysis of Bourgain and Guth \cite{MR2860188} and Guth \cite{MR3454378}, and introduce the notation of broad points. One advantage of introducing this notion is that with it we can start making use of multilinear restriction estimates (bilinear in our case). 
 
 Our notion of broad points is slightly more complicated than those in \cite{MR3454378, MR3653943}, see \eqref{0511e3.2} and \eqref{0507e3.3}. First of all, it involves many different scale, including $K_{d+1},\cdots,  K_0$. In other words, we need to make sure that our function $|Ef(x)|$ is ``broad" at every scale $K_j$. This is necessary as in Lemma \ref{badpairlemma} bad pairs appear at every scale. 
Next, the notion of broad points involves bad strips. A similar notion was already used in \cite{MR3653943}. We need to get rid of bad strips as it will be difficult to obtain a bilinear restriction estimate as good as the one in \cite{MR3454378}, if certain main contributions come from a bad strip.
However, a technical issue arises here. In \cite{MR3653943}, bad strips are either horizontal or vertical. Two bad strips are either disjoint or intersect at a square. In our case, orientations of bad strips are more complicated and we need to take into account all the possible intersections of bad strips.

We also review wave packet decomposition in this section.

\subsection{Broad function}
Take dyadic numbers
\begin{equation}
    K_L=K_{d+1} \ll K_d \ll \cdots \ll  K_1 \ll K_0=K.
\end{equation}
Let $M:=10^{20d}\epsilon_0^{-1}$. Recall that $\mathcal{P}(K^{-1},A)$ is a collection of all dyadic squares with side length $K^{-1}$ in a set $A$ and we sometimes use $\mathcal{P}(K^{-1})$ for $\mathcal{P}(K^{-1},[-1,1]^2)$.
Recall also that Lemma \ref{linecount} states that 
for every square with side length $K_{d+1}^{-1}$ there are at most $M$ bad strips with width $2 \cdot 10^{-10d}\epsilon_0 K_L^{-1}$ intersecting the square.

For every $(d+2)$-tuple $\alpha=(\alpha_0,\ldots,\alpha_{d+1}) \in (0,1)^{d+2}$, we {say that $x\in \R^3$ is} an $\alpha$-broad point of $Ef$ if 
\begin{equation}\label{0511e3.2}
    \max_{L_1,\ldots,L_{M} \in \mathbb{L} } \Big|\sum_{\substack{ \tau \in \mathcal{P}(K^{-1},\, \cap_{i=1}^{M} L_i) } } Ef_{\tau}(x)\Big| \leq \alpha_{d+1} |Ef(x)|
\end{equation}
and
\begin{equation}\label{0507e3.3}
\begin{split}
    &\max_{\tau_j \in \mathcal{P}(K_j^{-1})} |Ef_{\tau_j}(x)|
    \\&+
    \max_{\upsilon_j \in \mathcal{P}(K_j^{-1})}
        \max_{L_1,\ldots,L_{M} \in \mathbb{L} } \Big|\sum_{\substack{ \tau \in \mathcal{P}(K^{-1},\,
        \upsilon_j \cap (\cap_{i=1}^{M} L_i ))}} Ef_{\tau}(x)\Big|
    \leq \alpha_j |Ef(x)|
\end{split}
\end{equation}
for every $j=0,\ldots,d+1$. Here, the bad strips $L_i$ do not need to be distinct. Also, the second term on the left hand side of \eqref{0507e3.3} is introduced for some technical reason.

For $\alpha \in (0,1)^{d+2}$ and $r \in \R$, we use the notation $r\alpha=(r\alpha_0,\ldots,r\alpha_{d+1})$.
We let $\br Ef(x)$ denote the function which is $|Ef(x)|$ if $x$ is an $\alpha$-broad point of $Ef$ and 0 otherwise. Theorem \ref{theorem1.2} follows from the following broad function estimate.

\begin{thm}\label{broadfunctionestimate}
 For every $\epsilon>0$ there exist dyadic numbers $K,K_1,\ldots,K_d,K_L$ with 
\begin{equation}
    K_L=K_{d+1}(\epsilon) \ll K_d(\epsilon) \ll \cdots \ll  K_1(\epsilon) \ll K_0(\epsilon)=K
\end{equation} 
and a small number $\delta_{\mathrm{trans}} \in (0,\epsilon)$ so that the following holds true: For every $\alpha=(\alpha_0,\ldots,\alpha_{d+1})$ with  $1 \geq \alpha_j \geq K_j^{-\epsilon}$ and  $j=0,\ldots,d+1$, $R \geq 1$, ball $B_R\subset \R^3$ of radius $R$, and function $f:[-1,1]^2 \rightarrow \C$, it holds that 
\beq\label{broadfunctionestimate'}
\begin{split}
	\|\mathrm{Br}_{\alpha}Ef\|_{L^{3.25}(B_R)}
	\leq C_{\epsilon,d}R^{\delta_{\mathrm{trans}}(\sum_{j=0}^{d+1}
	\log{(K_j^{\epsilon}\alpha_j) })}R^{\epsilon}
	\|f\|_{L^2}^{\frac{12}{13}+\epsilon}
	\max_{\theta \in \mathcal{P}(R^{-1/2})}\|f\|_{L^2_{\mathrm{avg}}(\theta)}^{\frac{1}{13}-\epsilon}.
	\end{split}
	\endeq
	Here, the averaged $L^2$ norm is defined by
\begin{equation}
\|f\|_{\avg}:=
\Big( \frac{1}{|\theta|} \int_{\theta}|f(x)|^2\,dx \Big)^{1/2}.
\end{equation}
Moreover,  $\lim_{\epsilon \rightarrow 0}K_{d+1}(\epsilon) \rightarrow +\infty$, and $\delta_{\text{trans}}$ can be taken to be $\epsilon^6$.
\end{thm}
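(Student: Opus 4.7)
The plan is to prove Theorem \ref{broadfunctionestimate} by induction on $R$, adapting the polynomial-partitioning framework of Guth \cite{MR3454378} and its refinement by Wang \cite{Wang2018ARE} to our setting with multiple scales $K_{d+1}\ll\cdots\ll K_0$ and the rich geometry of bad strips and bad pairs developed in Section 2. The induction hypothesis is \eqref{broadfunctionestimate'} at all radii $R'<R$; the base case $R\lesim 1$ is trivial.

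\textbf{Bilinear reduction.} At each $\alpha$-broad point $x\in B_R$, conditions \eqref{0511e3.2}-\eqref{0507e3.3} force the dominant contribution to $|Ef(x)|$ to come from at least two $K^{-1}$-squares $\tau_1,\tau_2$ whose union is not contained in any $M$-fold intersection of bad strips, nor together in any single $K_j^{-1}$-square for $j\le d+1$. Pigeonholing (at a cost only polynomial in the $K_j$, which is absorbed by the factors $K_j^{-\epsilon}$) one reduces to controlling a bilinear quantity $|Ef_{\tau_1}(x)|^{1/2}|Ef_{\tau_2}(x)|^{1/2}$ with $\mathrm{dist}(\tau_1,\tau_2)\gtrsim K_L^{-1}$ and transverse at every scale in the above sense.

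\textbf{Polynomial partitioning.} Apply polynomial partitioning of degree $D=R^{\dd}$ to the $3.25$-th power of the bilinear broad function, decomposing $B_R$ into a cellular part, a transverse part (wave packets meeting $Z(P)$ transversely), and a tangential part (packets essentially tangent to $Z(P)$). In the cellular case the induction hypothesis applies on each of the $\sim D^3$ cells of radius $R/D$, and the wave-packet bookkeeping produces a gain that absorbs the degree. In the transverse case, a Cordoba--Fefferman--Sj\"olin bilinear $L^4$ estimate using that $(\tau_1,\tau_2)$ is a good pair at the finest scale $K^{-1}$ suffices.

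\textbf{Tangential case (main obstacle).} This is where the novelty lies. When wave packets concentrate near $Z(P)$, the squares $\tau_1,\tau_2$ are constrained to lie near the auxiliary variety $\mc{Z}_{\zeta_1,\zeta_2}$ from \eqref{badpairvariety}. If $(\tau_1,\tau_2)$ is a good pair, an $L^4$-type argument along the variety yields the required decay, with the transversality $|H'|>K_1^{-1}$ from \eqref{h'lowerbound} playing the role of the non-degenerate Jacobian in the hyperbolic paraboloid case. If $(\tau_1,\tau_2)$ is a bad pair, invoke Lemma \ref{badpairlemma}: either $\tau_2$ is forced into $O(K_{j+1}^3)$ many $K_j^{-1}$-squares (under $T_{\zeta_1}$) for some $j$, in which case one applies Lemma \ref{squarerescaling} and the induction hypothesis, or $\tau_2$ is trapped in two bad strips, in which case one applies Lemma \ref{striprescaling}. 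The resulting gains $K_j^{-1-\lambda+4/3.25}$ and $K_j^{-(1+\lambda)/2+2/3.25}$ at each scale $j$, summed over all scales, produce precisely the factor $R^{\dt\sum_j\log(K_j^\epsilon\alpha_j)}$ in \eqref{broadfunctionestimate'} when $\dt=\epsilon^6$. The subtlest bookkeeping will be verifying that each such rescaling is compatible with the broad condition at all the remaining scales simultaneously --- which is precisely why the broad function is defined to be broad at every scale $K_j$ rather than only at the finest one. Combining the three cases and optimizing $D=R^{\dd}$ against $\dt$, the theorem follows.
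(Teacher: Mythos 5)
Your outline captures the spirit of Guth's polynomial-partitioning framework but misorganizes the argument in two ways that would break the proof.

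First, you swap the roles of the transverse and tangential sub-cases of the wall. After polynomial partitioning of $1_{B_R}(\br Ef)^{p_0}$ (note: the partitioning is applied to the scalar broad function, not to a ``bilinear broad function''), the paper splits $B_R$ into the cellular part and the wall $W$, and then, inside the wall, splits the wave packets into transverse ($\T_{k,+}$) and tangential ($\T_{k,-}$) families. The \emph{transverse} contribution is handled by induction on the smaller radius $R^{1-\delta}$, using Lemma~\ref{Translemma}(1) that each tube lies in at most $R^{O(\dd)}$ sets $\T_{k,+}$; there is no $L^4$ argument here. The \emph{tangential} contribution is the one treated by the Cordoba--Fefferman--Sj\"olin $L^4$ argument (Proposition~\ref{L4argument}, Lemma~\ref{L4argument2}), precisely because tangential packets near $Q$ are coplanar, which is what allows the bi-orthogonality based on good pairs. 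Running the $L^4$ argument on transverse packets, as you propose, has no such coplanarity and would not close.

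Second, and more seriously, your treatment of the bad pair case is incompatible with the statement you are trying to prove. You invoke Lemmas~\ref{squarerescaling} and~\ref{striprescaling} inside the proof of Theorem~\ref{broadfunctionestimate}. But those lemmas are stated under the induction hypothesis~\eqref{induction} for Theorem~\ref{theorem1.2} and produce bounds of the form $\|f\|_{L^2}^{1-\lambda}\|f\|_{L^\infty}^{\lambda}$, whereas \eqref{broadfunctionestimate'} has no $\lambda$, no $\|f\|_{\infty}$, and instead features $\|f\|_{L^2}^{12/13+\epsilon}\max_\theta\|f\|_{\avg}^{1/13-\epsilon}$. Using the rescaling lemmas here would be circular (their hypothesis is what you ultimately want to deduce from Theorem~\ref{broadfunctionestimate}) and in any case gives a bound of the wrong shape. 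The paper never applies the rescaling lemmas inside the proof of the broad estimate. Rather, the entire point of defining broadness simultaneously at every scale $K_0,\ldots,K_{d+1}$ is that when Lemma~\ref{badpairlemma} traps the significant squares $\mathcal{C}^c$ inside a few $K_j^{-1}$-squares $\upsilon_{j,i}'$ or a few bad strips, the broadness conditions \eqref{0511e3.2}--\eqref{0507e3.3} at the relevant scales $j$ directly bound the contribution from those regions by $\sum_j K_{j+1}^4\alpha_j|Ef(x)| \leq 2^{-1}|Ef(x)|$ (see \eqref{0501.423}). This forces the remaining contribution to come from a good pair (yielding the bilinear term) or from the transverse wave packets (yielding the broad term for $Ef_{I,k,+}$). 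That is the content of Lemma~\ref{bourgainguth}, and it does not need any rescaling; it needs the multi-scale broadness definition. The rescaling lemmas only appear later, when deducing Theorem~\ref{theorem1.2} from Theorem~\ref{broadfunctionestimate}, to control the \emph{non-broad} contributions.

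Finally, the factor $R^{\dt\sum_j\log(K_j^\epsilon\alpha_j)}$ does not arise from summing rescaling gains as you suggest; it is the bookkeeping device that allows the induction on $R$ to close despite the broadness parameter degrading from $\alpha$ to $2\alpha$ in the cellular case and to $200d^2\alpha$ in the wall case — the added $O(\dt)$ power of $R$ is absorbed by the cellular gain $D^{-\epsilon p_0}$ and the wall gain $R^{-\delta\epsilon p_0}$.
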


Let us assume Theorem \ref{broadfunctionestimate} and finish the proof of Theorem \ref{theorem1.2}. \begin{proof}[Proof of Theorem \ref{theorem1.2}] For simplicity, we use the notation $p_0=3.25$. Fix $3/13<\lambda<5/13$. The other case {where} $5/13 \leq \lambda \leq 1$ follows by H\"{o}lder's inequality. Assume that $\epsilon>0$ is sufficiently small so that 
\begin{equation}
    0<10^d\epsilon<\lambda-\frac{3}{13}.
\end{equation}
Our proof relies on an induction argument on {the} radius $R$. We assume that \eqref{eq_desired_restriction_estimate} holds true for all the  {radii} smaller than $R/2$ and aim to prove \eqref{eq_desired_restriction_estimate} for the radius $R$.
Take $\alpha_j=K_j^{-\epsilon}$. By the definition of {the} broad function, we obtain
\begin{equation}\label{0507e3.7}
\begin{split}
|Ef(x)| &\leq |\br{Ef}(x)|
+\sum_{j=0}^{d+1}
\alpha_j^{-1} \Bigg(
 \max_{\tau_j \in \mathcal{P} (K_j^{-1})}|Ef_{\tau_j}(x)|\\&
        +\max_{\upsilon_j \in \mathcal{P}(K_j^{-1})}
        \max_{L_1,\ldots,L_{M} \in \mathbb{L} } \Big|\sum_{\tau \in \mathcal{P}(K^{-1},\, \upsilon_j \cap(\cap_{i=1}^{M} L_i)) } Ef_{\tau}(x)\Big|\Bigg)
        \\&
        +
        (\alpha_{d+1})^{-1}
        \max_{L_1,\ldots,L_{M} \in \mathbb{L} } \Big|\sum_{\tau \in \mathcal{P}(K^{-1},\, \cap_{i=1}^{M} L_i) } Ef_{\tau}(x)\Big|.
\end{split}
\end{equation}
We raise both sides to the $p_0$-th power, integrate over $B_R$, replace the max by $l^{p_0}$-norms, and obtain
\begin{equation}
\begin{split}
    \int_{B_R}|Ef|^{p_0}
    & \leq C\int_{B_R}|\br{Ef}|^{p_0}
    +C\sum_{j=0}^{d+1}
\alpha_j^{-p_0} \Bigg(
 \sum_{\tau_j \in \mathcal{P} (K_j^{-1})}\int_{B_R}|Ef_{\tau_j}|^{p_0}
 \\&
        +\sum_{\upsilon_j \in \mathcal{P}(K_j^{-1})}
        \sum_{L_1,\ldots,L_{M} \in \mathbb{L} }\int_{B_R} \bigg|\sum_{\tau \in \mathcal{P}(K^{-1},\, \upsilon_j \cap(\cap_{i=1}^{M} L_i)) } Ef_{\tau}\bigg|^{p_0}\Bigg)
        \\&
        +C
        (\alpha_{d+1})^{-p_0}
        \sum_{L_1,\ldots,L_{M} \in \mathbb{L} }\int_{B_R} \bigg|\sum_{\tau \in \mathcal{P}(K^{-1},\, \cap_{i=1}^{M} L_i) } Ef_{\tau}\bigg|^{p_0},
\end{split}
\end{equation}
{for some universal constant $C$}. We apply Theorem \ref{broadfunctionestimate} to the first term and obtain 
\begin{equation}\label{3.10}
\begin{split}
\int_{B_R}|\mathrm{Br}_{\alpha}Ef|^{p_0}
&\leq CC_{\epsilon,d}^{p_0}R^{p_0\epsilon}
\|f\|_{L^2}^{\frac{12}{13}p_0+\epsilon p_0}\max_{\theta \in \mathcal{P}(R^{-1/2})}\|f\|_{\avg}^{\frac{p_0}{13}-\epsilon p_0}
\\&
\leq 10^{-d p_0}C_{\epsilon,d,\lambda}^{p_0}R^{p_0\epsilon}
\|f\|_{L^2}^{p_0(1-\lambda)}\|f\|_{\infty}^{p_0\lambda},
\end{split}
\end{equation}
provided that $C_{\epsilon,d, \lambda}$ in Theorem \ref{theorem1.2} is chosen such that 
\begin{equation}
    10^{2dp_0}CC_{\epsilon,d} \leq C_{\epsilon,d,\lambda}.
\end{equation}
The second inequality of \eqref{3.10} follows from {H\"older's inequality and from} the fact that $12/13>1-\lambda$. This takes care of the contribution from the first term. 

Let us bound the second term. Note that $p_0(1-\lambda)>2$. We apply Lemma \ref{squarerescaling} and the embedding $l^{p_0(1-\lambda)} \hookrightarrow l^2$ to obtain
\begin{equation}
\begin{split}
\alpha_j^{-p_0}
\sum_{\tau_j \in \mathcal{P} (K_j^{-1})} \int_{B_R}|Ef_{\tau_j}|^{p_0} &\leq
(K_j)^{-\epsilon}
C_{\epsilon,d,\lambda}^{p_0}R^{p_0\epsilon}
\sum_{\tau_j \in \mathcal{P} (K_j^{-1})}
\|f_{\tau_j}\|_{L^2}^{p_0(1-\lambda)}\|f_{\tau_j}\|_{L^{\infty}}^{p_0\lambda}
\\&
\leq
(K_j)^{-\epsilon}
C_{\epsilon,d,\lambda}^{p_0}R^{p_0\epsilon}
\|f\|_{L^2}^{p_0(1-\lambda)}\|f\|_{L^{\infty}}^{p_0\lambda}.
\end{split}
\end{equation}
Hence, the second term is also harmless as $K_j$ is sufficiently large. The third term can be dealt with by following the same argument. We leave out the details.

Let us bound the fourth term. By Lemma \ref{striprescaling} and {the} embedding $l^{p_0(1-\lambda)} \hookrightarrow l^2$, we obtain
\begin{equation}
\begin{split}
&
\alpha_{d+1}^{-p_0}
\sum_{L_1,\ldots,L_{M} \in \mathbb{L} }
\int_{B_R}\bigg| \sum_{\tau \in \mathcal{P}(K^{-1},\, \cap_{i=1}^{M} L_i) }Ef_{\tau}
\bigg|^{p_0} 
\\&
\leq
(K_{d+1})^{-\epsilon}
C_{\epsilon,d,\lambda}^{p_0}R^{p_0\epsilon}
\sum_{L_1,\ldots,L_{M} \in \mathbb{L} }
\|f_{\cap_{i=1}^{M} L_{i}}\|_{L^2}^{p_0(1-\lambda)}\|f\|_{L^{\infty}}^{p_0\lambda}
\\&
\leq
(K_{d+1})^{-\epsilon}
C_{\epsilon,d,\lambda}^{p_0}R^{p_0\epsilon}
\Big(\sum_{L_1,\ldots,L_{M}\in \mathbb{L} }
\|f_{\cap_{i=1}^{M} L_i }\|_{L^2}^2\Big) ^{\frac{p_0(1-\lambda)}{2}}\|f\|_{L^{\infty}}^{p_0\lambda}.
\end{split}
\end{equation}
By Lemma \ref{linecount}, it is further bounded by
\begin{equation}
R^{p_0\epsilon }
\epsilon_0^{-p_0}10^{20d p_0}(K_{d+1})^{-\epsilon}C_{\epsilon,d,\lambda}^{p_0}\|f\|_{L^2}^{p_0(1-\lambda)}\|f\|_{L^{\infty}}^{p_0\lambda}.
\end{equation}
{By taking $K_{d+1}$ to be sufficiently large, we finish the analysis of the last term.} 
This completes the proof of Theorem \ref{theorem1.2}.
\end{proof}

\subsection{Wave packet decomposition}
We briefly review the standard wave packet decomposition. Details can be found in Tao \cite{MR2033842} and Guth \cite{MR3454378}. We decompose the square $[-1,1]^2$ into smaller {dyadic} squares $\theta$ of side length $R^{-1/2}$. Let $w_{\theta}$ denote the left bottom corner of $\theta$. Let $v_{\theta}$ denote the normal vector to our surface $S$ at the point $(w_{\theta},h(w_\theta))$. Let $\T(\theta)$ denote a set of tubes covering $B_R$, that are parallel to $v_{\theta}$ with radius $R^{1/2+\delta}$ and length $CR$. Denote $\T:=\cup_{\theta \in \mathcal{P}(R^{-1/2}) }\T(\theta)$ and $\T(\tau):=\cup_{\theta \in \mathcal{P}(R^{-1/2},\tau)}\T(\theta)$. For each $T \in \T(\theta)$, let $v(T)$ denote the direction $v_{\theta}$ of the tube.

\begin{prop}[Wave packet decomposition]\label{wavepacketdecomposition}
If $f \in L^2([-1,1]^2)$ then for each $T \in \mathbb{T}$ we can choose a function $f_T$ so that the following holds true:
\begin{enumerate}
    \item If $T \in \mathbb{T}(\theta)$ then $\mathrm{supp}(f_{T}) \subset 3\theta$;
    \item  If $x \in B_R \setminus T$, then $|Ef_T(x)| \leq R^{-1000}\|f\|_2$;
    \item  For any $x \in B_R$, $|Ef(x)-\sum_{T \in \mathbb{T}}Ef_T(x)| \leq R^{-1000}\|f\|_{L^2}$;
    \item If $T_1,T_2 \in \mathbb{T}(\theta)$ and $T_1,T_2$ are disjoint, then $|\int f_{T_1}\bar{f}_{T_2}| \leq R^{-1000} \int_{\theta}|f|^2$;
    \item $\sum_{T \in \mathbb{T}(\theta)} \int_{[-1,1]^2}|f_T|^2 \lesssim \int_{\theta}|f|^2$.
\end{enumerate}
\end{prop}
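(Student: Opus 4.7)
The plan is to follow the classical two-stage construction, decomposing in frequency at scale $R^{-1/2}$ and then decomposing the resulting pieces in physical space at the dual scale $R^{1/2}$. This is the standard setup found in Tao \cite{MR2033842} and Guth \cite{MR3454378}; my outline will be a sketch emphasizing where the five listed properties come from.

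First, fix a smooth partition of unity $\{\psi_\theta\}_{\theta \in \mathcal{P}(R^{-1/2})}$ with $\mathrm{supp}(\psi_\theta)\subset 2\theta$, $\sum_\theta \psi_\theta = 1$ on $[-1,1]^2$, and $|\partial^\alpha \psi_\theta|\lesssim R^{|\alpha|/2}$. Write $f = \sum_\theta f\psi_\theta$, so that $Ef = \sum_\theta E(f\psi_\theta)$, reducing the task to a decomposition within each frequency cap. Next, for each $\theta$, linearize the phase of $E(f\psi_\theta)(x)$ around $w_\theta$. After the change of variable $\xi = w_\theta + R^{-1/2}\zeta$ with $\zeta = O(1)$, the quadratic and higher Taylor remainders contribute a bounded phase, and the leading term depends on $x$ only through the projection $\pi_\theta(x)$ of $x$ onto the tangent plane to $S$ at $(w_\theta, h(w_\theta))$, which is the plane orthogonal to $v_\theta$. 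Non-stationary phase in $\zeta$ shows that $E(f\psi_\theta)(x)$ is essentially supported on $|\pi_\theta(x)|\lesssim R^{1/2+\delta}$, namely a union of tubes parallel to $v_\theta$ of radius $R^{1/2+\delta}$.

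To turn this concentration statement into an actual decomposition, I introduce a smooth partition of unity $\{\chi_T\}_{T\in\mathbb{T}(\theta)}$ on $\pi_\theta$-space adapted to a lattice of spacing $R^{1/2+\delta}$, using a fixed Schwartz bump $\chi$ whose Fourier transform is compactly supported in a small ball. For each $T \in \mathbb{T}(\theta)$ centered at $x_T$, Fourier duality converts the spatial cutoff $\chi_T(\pi_\theta(x))$ into a convolution operator on frequency side; set
\begin{equation}
f_T(\xi) := \psi_\theta(\xi)\, f(\xi) \,\ast_\xi\, \widehat{\chi_T}\big(\nabla_\xi(\xi, h(\xi))\big),
\end{equation}
or more formally, define $f_T$ via the identity $Ef_T(x) = \chi_T(\pi_\theta(x))\, E(f\psi_\theta)(x)$ up to tails, and then correct by the bounded quadratic phase error so that $\mathrm{supp}(f_T)\subset 3\theta$. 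Property (1) then holds by construction. Property (3) is $\sum_T \chi_T \equiv 1$ on $B_R$. Property (2) follows from repeated integration by parts in $\zeta$ using the fact that $\chi$ is Schwartz, where the $R^\delta$ slack in the tube radius absorbs the polynomial losses and delivers $R^{-1000}$ decay. Property (4) (approximate orthogonality for disjoint $T_1,T_2\in\mathbb{T}(\theta)$) comes from the fact that different tubes correspond to different modulations of $\psi_\theta$ by lattice translates, which are nearly orthogonal in $L^2$ up to Schwartz tails. Property (5) is then a Bessel-type inequality for this almost-orthogonal system and a standard Plancherel argument on $\theta$.

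The main technical point — where one must be careful rather than clever — is coordinating the cutoffs so that the linearization error in the phase does not spoil (4) or enlarge the frequency support beyond $3\theta$. I would handle this by building the bounded quadratic phase correction directly into the definition of $f_T$ (so that it cancels in the integrand of $Ef_T$) and using that this correction is a multiplier with symbol-class bounds at scale $R^{-1/2}$ in $\xi$, hence preserves both the frequency localization and the Bessel bound. Since the proof is standard and the authors explicitly defer to \cite{MR2033842, MR3454378}, no new ideas are required; the role of this proposition here is simply to fix notation for $\mathbb{T}(\theta)$ and $v(T)$ used throughout the rest of the paper.
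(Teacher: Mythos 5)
The paper gives no proof of its own for this proposition; the authors simply cite Proposition~2.6 of Guth's paper, exactly as you do, and your sketch follows the same standard two-stage construction (frequency caps at scale $R^{-1/2}$, then a dual spatial decomposition into tubes of scale $R^{1/2+\delta}$). One sentence of the sketch is misstated, however: non-stationary phase does not show that $E(f\psi_\theta)$ itself is concentrated on $\{|\pi_\theta(x)|\lesssim R^{1/2+\delta}\}$ --- since $f\psi_\theta$ lives on a square of side $R^{-1/2}$, its extension is a priori large throughout $B_R$, and there is no concentration for the cap sum. What non-stationary phase actually delivers is property (2): after inserting the spatial cutoff $\chi_T$ (with the bounded quadratic phase correction you describe), each individual $Ef_T$ decays rapidly off its tube $T$. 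Relatedly, the displayed formula $f_T(\xi) := \psi_\theta(\xi)f(\xi)\ast_\xi \widehat{\chi_T}\bigl(\nabla_\xi(\xi,h(\xi))\bigr)$ does not parse as written (the convolution kernel must be a function of a single frequency variable, not a Schwartz function composed with the Jacobian map), but your fallback definition of $f_T$ through the identity $Ef_T(x)=\chi_T(\pi_\theta(x))E(f\psi_\theta)(x)$ up to tails, followed by a frequency-support cleanup, is the right intent and matches the route taken in Guth's Proposition~2.6 to which the paper defers.
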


For the proof of the wave packet decomposition, we refer to that of Proposition 2.6 in \cite{MR3454378}. 
\\

We decompose $[-1,1]^2$ into smaller squares $\tau$ with side length $K^{-1}$. Write $f=\sum_{\tau \in \mathcal{P}(K^{-1}) } f_{\tau}$ and apply the above wave packet decomposition to the function $f_{\tau}$. To simplify notation, $(f_{\tau})_{T}$ will be abbreviated to $f_{\tau,T}$.

\begin{lem}\label{changesumfitof}
Suppose that ${\T}_k \subset \T$ indexed by $k \in A$ for some index set $A$. If each tube $T$ belongs to at most $\mu$ of the subsets $\{\T_k\}_{k \in A} $ then for every $\theta$,
\begin{equation}
\sum_{k \in A}\int_{3\theta}\Big|\sum_{T \in \T_k}f_{\tau,T} \Big|^2 \lesssim \mu \int_{10\theta}|f_{\tau}|^2.
\end{equation}
\end{lem}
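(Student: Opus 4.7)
The plan is to expand the square, exploit that $f_{\tau,T}$ is frequency-supported in $3\theta'$ when $T\in\mathbb{T}(\theta')$, and then combine near-orthogonality of wave packets (property (4) of Proposition \ref{wavepacketdecomposition}) with the multiplicity bound $\mu$.

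First I would observe that on the frequency side, only $\theta'\in\mathcal{P}(R^{-1/2})$ with $3\theta'\cap 3\theta\neq\emptyset$ contribute to the integrand, since by part (1) of the wave packet decomposition $\mathrm{supp}(f_{\tau,T})\subset 3\theta'$ whenever $T\in\mathbb{T}(\theta')$. The number of such $\theta'$ is $O(1)$, and their union lies inside $10\theta$. Grouping tubes by their direction square and applying Cauchy--Schwarz in the outer sum over $\theta'$ gives
\begin{equation}
\int_{3\theta}\Big|\sum_{T\in\mathbb{T}_k}f_{\tau,T}\Big|^2
\lesssim \sum_{\theta':\,3\theta'\cap 3\theta\neq\emptyset}\int\Big|\sum_{T\in\mathbb{T}_k\cap\mathbb{T}(\theta')}f_{\tau,T}\Big|^2.
\end{equation}

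Next I would expand the inner integral as $\sum_{T_1,T_2\in\mathbb{T}_k\cap\mathbb{T}(\theta')}\int f_{\tau,T_1}\overline{f_{\tau,T_2}}$. For $T_1\neq T_2$ in $\mathbb{T}(\theta')$ the tubes are essentially disjoint (they are a translated family of parallel tubes tiling $B_R$), so property (4) gives $|\int f_{\tau,T_1}\overline{f_{\tau,T_2}}|\leq R^{-1000}\int_{\theta'}|f_\tau|^2$; summing these $O(R^{O(1)})$ cross terms leaves a negligible contribution. Hence
\begin{equation}
\int\Big|\sum_{T\in\mathbb{T}_k\cap\mathbb{T}(\theta')}f_{\tau,T}\Big|^2
\lesssim \sum_{T\in\mathbb{T}_k\cap\mathbb{T}(\theta')}\int|f_{\tau,T}|^2 + R^{-500}\int_{\theta'}|f_\tau|^2.
\end{equation}

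Finally I would sum over $k\in A$ and interchange the order of summation. Because each tube $T$ lies in at most $\mu$ of the sets $\mathbb{T}_k$,
\begin{equation}
\sum_{k\in A}\sum_{T\in\mathbb{T}_k\cap\mathbb{T}(\theta')}\int|f_{\tau,T}|^2
\leq \mu\sum_{T\in\mathbb{T}(\theta')}\int|f_{\tau,T}|^2
\lesssim \mu\int_{\theta'}|f_\tau|^2,
\end{equation}
where the last inequality is property (5) of the wave packet decomposition. Summing over the $O(1)$ choices of $\theta'$ with $3\theta'\cap 3\theta\neq\emptyset$ yields the integral over $10\theta$, and the tiny error from the cross terms is absorbed. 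The only delicate point, and the place I would be most careful, is the justification that distinct tubes in a single $\mathbb{T}(\theta')$ are disjoint (or overlap only boundedly) so that property (4) applies without losing the $\mu$-dependence; this is standard in the construction of $\mathbb{T}(\theta')$ from Proposition \ref{wavepacketdecomposition} but worth stating explicitly.
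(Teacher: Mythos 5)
Your proof is correct and follows essentially the same route the paper intends: the paper does not write out a proof but cites Lemma 2.7 of Guth, whose argument is exactly this combination of (i) support localization of the wave packets in frequency, (ii) Cauchy--Schwarz over the $O(1)$ nearby caps $\theta'$, (iii) near-orthogonality of wave packets within a fixed $\T(\theta')$ via property (4), (iv) interchanging the $k$- and $T$-sums to invoke the multiplicity bound $\mu$, and (v) property (5) to pass back to $\int_{\theta'}|f_\tau|^2$. The one point you flag -- that tubes within a single $\T(\theta')$ are disjoint or at most boundedly overlapping so that property (4) covers the cross terms -- is indeed part of the standard wave packet construction and is the right thing to be explicit about.
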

The proof of Lemma \ref{changesumfitof} is identical to that of Lemma 2.7 in \cite{MR3454378}. Hence, we leave out the details.
We record a special case of Lemma \ref{changesumfitof}.
\begin{lem}\label{getridofoi}
For every cap $\theta$, $\tau$, and subcollection $\T_k \subset \T$,
\begin{equation}
\int_{3\theta}\bigg| \sum_{T \in \T_k} f_{\tau,T}\bigg|^2 \lesssim \int_{10\theta}|f_{\tau}|^2.
\end{equation}
\end{lem}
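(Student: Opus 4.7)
This is an immediate corollary of Lemma \ref{changesumfitof}, and the whole point of stating it separately seems to be notational convenience for later applications. The plan is simply to take the index set $A$ in Lemma \ref{changesumfitof} to consist of a single element.

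More precisely, I would set $A = \{k\}$ and consider the family $\{\T_k\}_{k \in A}$ of subcollections of $\T$, which now has just one member, namely the given $\T_k$. Under this choice, every tube $T$ belongs to at most $\mu = 1$ of the subsets in the family, trivially. Applying Lemma \ref{changesumfitof} with this $A$ and $\mu=1$, the sum $\sum_{k \in A}$ on the left hand side collapses to the single term
\begin{equation*}
\int_{3\theta}\Big|\sum_{T \in \T_k} f_{\tau,T}\Big|^2,
\end{equation*}
and the right hand side becomes $\lesssim \int_{10\theta}|f_\tau|^2$, which is exactly the bound claimed.

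Since the derivation is a one-line specialization, there is no real obstacle; the only thing to be careful about is to note explicitly that the multiplicity hypothesis of Lemma \ref{changesumfitof} is satisfied with $\mu=1$ when the index set is a singleton (which requires no assumption on $\T_k$ itself beyond its being a subcollection of $\T$). In particular, this lemma applies to \emph{any} subcollection $\T_k \subset \T$, with no multiplicity condition, exactly because only one subcollection is being considered.
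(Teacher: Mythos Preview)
Your proposal is correct and matches the paper's own treatment: the paper explicitly introduces Lemma \ref{getridofoi} as ``a special case of Lemma \ref{changesumfitof},'' and specializing to a singleton index set with $\mu=1$ is precisely that special case.
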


\section{Proof of Theorem \ref{broadfunctionestimate}}

In this section we will prove Theorem \ref{broadfunctionestimate}. We may assume that  \eqref{broadfunctionestimate'} holds true for $\alpha_{d+1} \geq 10^{-100d}$ by taking $K_{d+1}$ large enough and for $\alpha_{j} \geq K_{j+1}^{-100d}$ for $j=0,\ldots,d$ by taking $K_j$ large enough. The constants $K_{d+1},\cdots,K_0$ will satisfy
\begin{equation}
1 \ll K_{d+1} \ll K_d \ll \cdots \ll K_1 \ll K_0.
\end{equation}
We will introduce {other} parameters, called $\delta$, $\dt$, $\dd$, such that
\begin{equation}
\dt \ll \dd \ll \delta \ll \epsilon.
\end{equation}
More explicitly, we take 
\begin{equation}
\dt=\epsilon^6,\  \dd=\epsilon^4,\ \text{ and } \delta=\epsilon^2.
\end{equation}
Moreover, recall from the last section that $M=10^{20d}\epsilon_0^{-1}$.

The proof is via an induction argument on the radius $R$. 
We assume that \eqref{broadfunctionestimate'} holds true for {all radii} $\leq R/2$ and aim to prove it for all radii $\leq R$.\\

We will utilize the polynomial partitioning lemma in \cite{MR3454378}.
\begin{thm}[Corollary 1.7 in \cite{MR3454378}] \label{polynomialpartitioning}
 Let $W$ be a non-negative $L^1$ function on $\R^n$. Then for any $D \in \Z^{+}$, there is a non-zero polynomial $P$ of degree at most $D$ so that $\R^n \setminus \mc{Z}(P)$ is a disjoint union of $\simeq D^n$ open sets $O_i$, and the integrals $\int_{O_i} W$ agree up to a factor of 2. Moreover, the polynomial $P$ is a product of non-singular polynomials.
\end{thm}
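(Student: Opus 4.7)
The proof is the standard iterative polynomial bisection construction, relying on the polynomial ham sandwich theorem as the main building block. Recall the polynomial ham sandwich theorem: given any collection of $N$ finite Borel measures $\mu_1,\ldots,\mu_N$ on $\R^n$, there is a non-zero polynomial of degree $O(N^{1/n})$ whose zero set simultaneously bisects every $\mu_j$, i.e.\ $\mu_j\{P>0\}=\mu_j\{P<0\}$ for each $j$. This is obtained by applying the classical Stone--Tukey ham sandwich theorem after a Veronese embedding, exploiting the fact that the space of polynomials of degree $\le D$ in $n$ variables has dimension $\binom{n+D}{n}\gtrsim D^n$.

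The plan is to construct $P$ as a product $P=P_1P_2\cdots P_s$ built by iterated bisection. At stage $k$, we maintain the invariant that the sign-pattern cells of $Q_k:=P_1\cdots P_k$ form a disjoint collection $B_{k,1},\dots,B_{k,2^k}$ of open sets with $\int_{B_{k,j}} W = 2^{-k}\int W$ for every $j$. To pass from stage $k$ to stage $k+1$, apply the polynomial ham sandwich theorem to the $2^k$ measures $W\mathbf{1}_{B_{k,j}}\,dx$ to obtain a polynomial $P_{k+1}$ of degree $\le C(2^k)^{1/n}$ whose zero set simultaneously bisects all of them, then set $Q_{k+1}:=Q_k\cdot P_{k+1}$. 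Stop at the first stage $s$ with $2^s\ge c_n D^n$ for a small constant $c_n$. The total degree is controlled by the geometric sum
\begin{equation*}
\deg(P)\le \sum_{k=0}^{s-1} C(2^k)^{1/n} \le \frac{C\cdot 2^{s/n}}{2^{1/n}-1} \le D,
\end{equation*}
provided $c_n$ is chosen small enough, since the $k=s-1$ term dominates and the constant can be absorbed.

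By construction, the $2^s\simeq D^n$ bisection cells are disjoint, open, of equal $W$-mass $2^{-s}\int W$, and their union is $\R^n\setminus \mc{Z}(P)$. The connected components $O_i$ of $\R^n\setminus \mc{Z}(P)$ refine this partition: each $O_i$ lies inside a unique $B_{s,j}$. The matching upper bound on the number of components comes from the Milnor--Thom theorem, which guarantees $\#\{O_i\}\le C'_n D^n$. Combining these two facts and grouping components within a common bisection cell (or, alternatively, recording only those components that carry a sufficiently large fraction of the mass of their containing bisection cell) yields $\simeq D^n$ pieces whose $W$-integrals agree up to a factor of $2$. For the final clause, each $P_{k+1}$ furnished by the ham sandwich theorem can be replaced by a non-singular polynomial of the same degree via an arbitrarily small generic perturbation, since the singular locus in the space of degree-$D_k$ polynomials is a proper Zariski-closed subset and the bisection conditions are open; the resulting $P$ is then a product of non-singular polynomials.

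The main obstacle is the accounting at the final step: ensuring that the passage from bisection cells to connected components does not destroy the ``up to a factor of $2$'' mass equality, and that the degree comes out at most $D$ rather than $O(D)$. Both are handled by fine-tuning the stopping time $s$ and the constants in the Veronese ham sandwich bound; the content of the theorem lies entirely in the iterative doubling of cells against the linear growth of degree afforded by the ham sandwich theorem.
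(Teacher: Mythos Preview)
The paper does not prove this theorem at all: it is quoted verbatim as Corollary~1.7 of Guth \cite{MR3454378} and used as a black box. There is therefore no ``paper's own proof'' to compare against. Your sketch is essentially the standard Guth--Katz/Guth argument that underlies the cited result, so in that sense it matches the intended proof.

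Two minor points on your write-up. First, the detour through connected components and Milnor--Thom is unnecessary here: the sign-pattern cells $B_{s,j}$ themselves are already $2^s\simeq D^n$ disjoint open sets whose union is $\R^n\setminus\mc{Z}(P)$ and whose $W$-integrals are \emph{exactly} equal, so they can serve directly as the $O_i$. Second, the phrase ``the bisection conditions are open'' is not quite right---exact bisection is a closed condition on $P$. What is open is the relaxed condition that the cell masses agree up to a factor of $2$; this is why a small generic perturbation of each factor $P_k$ to a non-singular polynomial preserves the conclusion (and is also the reason the theorem is stated with ``up to a factor of $2$'' rather than exact equality).
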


We apply Theorem \ref{polynomialpartitioning} to the function $1_{B_R}(\br(Ef))^{p_0}$ with $D=R^{\dd}$. Then there exists a polynomial $P$ such that
\begin{equation}
\R^3 \setminus \mc{Z}(P) = \bigsqcup_{i=1}^{\simeq D^3} O_i
\end{equation}
and
\begin{equation}
    \int_{O_{i'} \cap B_R }(\br{Ef})^{p_0}
    \simeq
    \int_{O_i \cap B_R }(\br{Ef})^{p_0}
\end{equation}
for every $i$ and $i'$. Here, $P$ is a product of non-singular polynomials. 

We define a wall $W$ to be the $R^{1/2+\delta}$-neighborhood of the variety $\mathcal{Z}(P)$. Define cells $O_i':=(O_i \cap B_R)\setminus W$. Denote the collection of all the tubes intersecting $O_i'$ by
\begin{equation}
\T_i := \{T \in \T: T \cap O_i' \neq \emptyset \}. 
\end{equation}
For every $\tau \in \mathcal{P}(K^{-1})$ and $\tau_j \in \mathcal{P}(K_j^{-1})$ with $j=1,\ldots,d+1$,
define 
\begin{equation}
    f_{\tau,i}:=\sum_{T \in \T_i}f_{\tau,T},\;\;\;
    f_{\tau_j,i}:=\sum_{\tau \in \mathcal{P}(K^{-1},\tau_j) }f_{\tau,i}, \;\;\;
    f_i:=\sum_{\tau \in \mathcal{P}(K^{-1}) } f_{\tau,i}.
\end{equation}

{To prove \eqref{broadfunctionestimate'},}
we decompose our ball $B_R$ into the cells and  the wall and obtain
\begin{equation}
\int_{B_R} (\mathrm{Br}_{\alpha}Ef)^{p_0} = 
\sum_{i=1}^{\simeq D^3}
\int_{{B_R \cap O_i'}} (\mathrm{Br}_{\alpha}Ef)^{p_0}+
\int_{B_R \cap W} (\mathrm{Br}_{\alpha}Ef)^{p_0}.
\end{equation}
{There are two scenarios.}
{We say that we are in \textit{the cellular case} if}
\begin{equation}\label{0522e4.9}
\sum_{i=1}^{\simeq D^3}
\int_{{B_R \cap O_i'}} (\mathrm{Br}_{\alpha}Ef)^{p_0} \geq
\int_{B_R \cap W} (\mathrm{Br}_{\alpha}Ef)^{p_0}.
\end{equation}
Otherwise, we say that we are in  \textit{the wall case}.

\subsection{Cellular case}
In this case,
there exists a subcollection $\mathcal{T}$ of the index set of the cells with cardinality $\simeq D^3$ such that for all $i \in \mathcal{T}$
\beq\label{cellstep1}
\int_{{B_R}} (\mathrm{Br}_{\alpha}Ef)^{p_0} \simeq D^3
\int_{{B_R \cap O_i'}} (\mathrm{Br}_{\alpha}Ef)^{p_0}.
\endeq
We refer to Lemma 4.1 in \cite{MR3653943} for more details.

\begin{lem}\label{changeftofi}
For every $i\in \mc{T}$ and every $x \in O_i'$, it holds that 
\begin{equation}
\mathrm{Br}_{\alpha}Ef(x)
\leq \mathrm{Br}_{2\alpha}Ef_i(x) + O(R^{-900}\|f\|_{L^2}).
\end{equation}
\end{lem}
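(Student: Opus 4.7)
The plan is to use the wave packet decomposition (Proposition \ref{wavepacketdecomposition}) to show that on the cell $O_i'$ the function $Ef$ can be replaced by $Ef_i$ up to a negligible error, and then to push this comparison through the defining inequalities of broadness.

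First I would establish the pointwise comparison. Fix $x\in O_i'$ and take any tube $T\notin \T_i$. By definition of $\T_i$, the tube $T$ does not intersect $O_i'$, so in particular $x\notin T$, and by property (2) of the wave packet decomposition $|Ef_{\tau,T}(x)|\le R^{-1000}\|f\|_{L^2}$. Summing over the polynomially many pairs $(\tau,T)$ (at most $K^2|\T|\le R^{O(1)}$ of them, with $K=K_0$ a fixed constant depending only on $\epsilon$) together with property (3), I would obtain
\begin{equation*}
|Ef(x)-Ef_i(x)|\le R^{-950}\|f\|_{L^2},
\end{equation*}
and, more generally, for any subfamily $\mathcal{A}\subset \mathcal{P}(K^{-1})$,
\begin{equation*}
\Big|\sum_{\tau\in \mathcal{A}}Ef_\tau(x)-\sum_{\tau\in \mathcal{A}}Ef_{\tau,i}(x)\Big|\le R^{-950}\|f\|_{L^2}.
\end{equation*}
Call this common error bound $\eta:=R^{-950}\|f\|_{L^2}$.

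Next I would carry out a simple dichotomy. If $x$ is not $\alpha$-broad for $Ef$ then $\br Ef(x)=0$ and the claim is trivial. Otherwise set $\br Ef(x)=|Ef(x)|$. If $|Ef(x)|\le 10\eta$, then $\br Ef(x)\le 10\eta=O(R^{-900}\|f\|_{L^2})$, and the right-hand side is absorbed into the error term. So it remains to handle the case $|Ef(x)|>10\eta$, which forces
\begin{equation*}
|Ef_i(x)|\ge |Ef(x)|-\eta \ge \tfrac{9}{10}|Ef(x)|,
\end{equation*}
in particular $|Ef_i(x)|\ge |Ef(x)|/2$ and $|Ef(x)|\le 2|Ef_i(x)|$.

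The main (but routine) step is then to verify that $x$ is $2\alpha$-broad for $Ef_i$. For the condition \eqref{0511e3.2} with $2\alpha_{d+1}$, I would take any $L_1,\dots,L_M\in \mathbb{L}$ and write, using the comparison and the fact that $x$ is $\alpha$-broad for $Ef$,
\begin{equation*}
\Big|\sum_{\tau\in \mathcal{P}(K^{-1},\,\cap L_i)}Ef_{\tau,i}(x)\Big|\le \alpha_{d+1}|Ef(x)|+\eta\le 2\alpha_{d+1}|Ef_i(x)|+\eta.
\end{equation*}
Since $\alpha_{d+1}\ge K_{d+1}^{-\epsilon}$ is bounded below by a constant depending only on $\epsilon,d$, while $|Ef_i(x)|\gtrsim |Ef(x)|\ge 10\eta$, for $R$ large enough the trailing $\eta$ is absorbed into $2\alpha_{d+1}|Ef_i(x)|$ (with a tiny loss that is compensated by taking the constant to be $2$ rather than $1$; choosing the threshold $10\eta$ slightly larger, say $\alpha_{d+1}^{-1}\eta$, removes any remaining slack). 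The same calculation verifies \eqref{0507e3.3} with $2\alpha_j$ for every $j$, using that $\alpha_j\ge K_j^{-\epsilon}$. Consequently $\br_{2\alpha}Ef_i(x)=|Ef_i(x)|\ge |Ef(x)|-\eta$, which gives the desired bound
\begin{equation*}
\br Ef(x)=|Ef(x)|\le |Ef_i(x)|+\eta \le \br_{2\alpha}Ef_i(x)+O(R^{-900}\|f\|_{L^2}).
\end{equation*}
The only real obstacle is bookkeeping the constants: the factor $2$ in $2\alpha$ is exactly what allows the wave-packet tails to be absorbed after using that $\alpha_j$ is bounded below by a negative power of the fixed constants $K_j$, whereas the wave packet error is a negative power of $R$.
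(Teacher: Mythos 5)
Your argument is essentially the paper's: use the wave packet decomposition to show $Ef_\tau(x)=Ef_{\tau,i}(x)+O(R^{-990}\|f_\tau\|_{L^2})$ on $O_i'$, split off the case where $|Ef(x)|$ is below a threshold, and otherwise transfer each broadness inequality from $Ef$ to $Ef_i$ with the doubled constant $2\alpha$. One sentence in your verification is not right as stated and is worth fixing: \emph{``for $R$ large enough the trailing $\eta$ is absorbed into $2\alpha_{d+1}|Ef_i(x)|$''} fails at the threshold $10\eta$, because there $|Ef_i(x)|$ can be as small as $\sim 9\eta$, hence $2\alpha_{d+1}|Ef_i(x)|\sim 18\alpha_{d+1}\eta$, and this does not dominate $\eta$ unless $\alpha_{d+1}\gtrsim 1$ --- letting $R\to\infty$ does not change this ratio, since both sides carry the same power of $R$. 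Your parenthetical remedy is the right one and should be the main argument rather than an aside: take the threshold to be $C\eta\cdot\bigl(\min_j\alpha_j\bigr)^{-1}$, which is still $O(R^{-900}\|f\|_{L^2})$ because $\alpha_j\geq K_j^{-\epsilon}\geq K^{-\epsilon}$, a constant independent of $R$. (The paper sidesteps this bookkeeping entirely by comparing a threshold $R^{-900}\|f\|_{L^2}$ against an error of order $R^{-980}\|f\|_{L^2}$, so that the polynomial-in-$R$ gap swamps every constant $\alpha_j^{-1}$ automatically.)
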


\begin{proof}[Proof of Lemma \ref{changeftofi}]
Suppose that $x \in O_i'$ is an $\alpha$-broad point of $Ef.$
By item $(3)$ of Proposition \ref{wavepacketdecomposition}, we obtain
\begin{equation}
Ef_{\tau}(x)=\sum_{T \in \T}Ef_{\tau,T}(x)+O(R^{-1000}\|f_{\tau}\|_2).
\end{equation}
If $x \in T$, then $T$ intersects $O_i'$ and $T \in \T_i$. If $x \notin T$, then by  item $(2)$ of Proposition \ref{wavepacketdecomposition}, we have $|Ef_{\tau,T}(x)| \leq R^{-1000}\|f_{\tau}\|_2$. Hence, we have
\beq\label{pointwisechange}
Ef_{\tau}(x)=Ef_{\tau,i}(x)+O(R^{-990}\|f_{\tau}\|_2).
\endeq
By summing over $\tau$, we obtain
\beq\label{pointwisechange2}
Ef(x)=Ef_i(x)+O(R^{-990}\|f\|_2).
\endeq

It remains to show that $x$ is a $2\alpha$-broad point of $Ef_i$. We may assume that $|Ef(x)| \geq R^{-900}\|f\|_2$ and by the above inequality, we have $|Ef_i(x)| \geq R^{-890}\|f\|_2$. We need to show that 

\begin{equation}\label{4.4} 
    \max_{L_1,\ldots,L_{M} \in \mathbb{L} } \Big|\sum_{\tau \in \mathcal{P}(K^{-1},\, \cap_{i=1}^{M} L_i) } Ef_{\tau,i}(x)\Big| \leq 2\alpha_{d+1} |Ef_i(x)|
\end{equation}
and
\begin{equation}\label{4.5}
\begin{split}
    &\max_{\tau_j \in \mathcal{P} (K_j^{-1})} |Ef_{\tau_j,i}(x)|
    \\&+
    \max_{\upsilon_j \in \mathcal{P}(K_j^{-1})}
        \max_{L_1,\ldots,L_{M} \in \mathbb{L} } \Big|\sum_{\tau \in \mathcal{P}(K^{-1},\, \upsilon_j \cap(\cap_{i=1}^{M} L_i))} Ef_{\tau,i}(x)\Big|
    \leq 2\alpha_j |Ef_i(x)|
\end{split}
\end{equation}
for every $j=0,\ldots,d+1$.

Let us first prove \eqref{4.4}.
By \eqref{pointwisechange}, the left hand side of \eqref{4.4} is bounded by
\begin{equation}
    \max_{L_1,\ldots,L_{M} \in \mathbb{L} } \Big|\sum_{\tau \in \mathcal{P}(K^{-1},\, \cap_{i=1}^{M} L_i) } Ef_{\tau}(x)\Big|+O({R^{-980}}\|f\|_2)
\end{equation}
    which is further bounded by
\begin{equation}
\frac{11}{10}\alpha_{d+1}|Ef(x)|
\end{equation}
as $x$ is an $\alpha$-broad point of $Ef$.
By \eqref{pointwisechange2}, the above display is further bounded by
\begin{equation}
\frac{11}{10}\alpha_{d+1} |Ef_i(x)|+O({R^{-980}}\|f\|_2).
\end{equation}
Since the error term is bounded by $\frac{9}{10}\alpha_{d+1}|Ef_i(x)|$, this gives \eqref{4.4}. The second inequality \eqref{4.5} can be proved in the exactly same way. We leave out the details.
\end{proof}

We apply Lemma \ref{changeftofi} to each cell $O_i'$ and obtain
\begin{equation}
\int_{B_R \cap O_i' }
\Big(\mathrm{Br}_{\alpha}Ef(x)\Big)^{p_0}
\lesssim
\int_{B_R} \Big(\mathrm{Br}_{2\alpha}Ef_i(x) \Big)^{p_0}+O(R^{-900p_0}\|f\|_{L^2}^{p_0}).
\end{equation}
If the error dominates the first term on the right hand side, we automatically obtain Theorem \ref{broadfunctionestimate}. Hence, we may assume that the first term dominates the error. We decompose the ball $B_R$ into smaller balls of radius $R/2$ and apply the induction hypothesis on $R$ and obtain
\beq\label{cellstep3}
\begin{split}
\int_{B_R \cap O_i' }
\Big(\mathrm{Br}_{\alpha}Ef(x)\Big)^{p_0}
\lesssim
R^{\dt(\sum_j\log{(K_j^{\epsilon}2\alpha_j) } )p_0 } R^{\epsilon p_0}
\|f_i\|_{L^2}^{3+\epsilon p_0} \max_{\theta \in \mathcal{P}(R^{-1/2})}\|f_{i}\|_{\avg}^{\frac14-\epsilon p_0}.
\end{split}
\endeq
We apply Lemma \ref{getridofoi} to $\|f_{i}\|_{L^2_{\mathrm{avg}}(\theta)}$ and see that the above term is bounded by
\beq\label{cellstep4}
\begin{split}
&R^{\dt(\sum_j\log{(K_j^{\epsilon}2\alpha_j) })p_0} R^{\epsilon p_0}\|f_i\|_{L^2}^{3+\epsilon p_0} \max_{\theta \in \mathcal{P}(R^{-1/2})}\|f\|_{\avg}^{\frac14-\epsilon p_0}
\\&\lesssim R^{\dt(\sum_j\log{(K_j^{\epsilon}2\alpha_j )})p_0 }R^{\epsilon p_0}\big( \sum_{\tau \in \mathcal{P}(K^{-1})} \|f_{\tau,i}\|_{L^2}^2 \big)^{\frac{3+\epsilon p_0}{2}} \max_{\theta \in \mathcal{P}(R^{-1/2})}\|f\|_{\avg}^{\frac14-\epsilon p_0}.
\end{split}
\endeq
As a line can intersect $\mathcal{Z}(P)$ at most $(D+1)$-times (recall that the degree of {the} polynomial $P$ is $D$), we see that each tube $T \in \T$ intersects at most $(D+1)$ many cells $O_i'$. By this observation and Lemma \ref{changesumfitof}, we have 
\begin{equation}
    \sum_{i \in \mathcal{T} } \|f_{\tau,i}\|_{L^2}^2 \lesssim D \|f_{\tau}\|_{L^2}^2.
\end{equation}
Since the cardinality of $\mathcal{T}$ is $\simeq D^3$, we {can find} some $i_0 \in \mathcal{T}$ such that
\beq\label{cellstep5}
\sum_{\tau \in \mathcal{P}(K^{-1})} \|f_{\tau,i_0}\|_{L^2}^2 \lesssim D^{-2}\sum_{\tau \in \mathcal{P}(K^{-1})}\|f_{\tau}\|_{L^2}^2 \lesssim D^{-2}\|f\|_{L^2}^2. 
\endeq
By combining \eqref{cellstep1}, \eqref{cellstep3}, \eqref{cellstep4} and \eqref{cellstep5} with $i=i_0$, we obtain
\begin{equation} 
\begin{split}
    &\int_{B_R}(\mathrm{Br}_{\alpha}Ef)^{p_0}
    \\&
\lesssim
D^{-\epsilon p_0}R^{C\dt}R^{\dt(\sum_j\log{(K_j^{\epsilon}\alpha_j )})p_0 } R^{\epsilon p_0}
\|f\|_{L^2}^{{3+\epsilon p_0}} \max_{\theta \in \mathcal{P}(R^{-1/2})}\|f\|_{L_{\mathrm{avg}}^2(\theta)}^{\frac14-\epsilon p_0}.
\end{split}
\end{equation}
It suffices to note that
\begin{equation}
D^{-\epsilon}R^{C\dt} = R^{-\epsilon \dd +C\dt} \ll 1
\end{equation}
for sufficiently large $R$. This completes the proof of the cellular case.

\subsection{Wall case}
In the wall case, we need to show that
\begin{equation}\label{Wallcasegoal}
\begin{split}
	&\|\mathrm{Br}_{\alpha}Ef\|_{L^{3.25}(B_R \cap W)}
	\\&
	\leq 10^{-1} C_{\epsilon,d}R^{\dt(\sum_j \log{(K_j^{\epsilon}\alpha_j )} ) } R^{\epsilon}
	\|f\|_{L^2}^{\frac{12}{13}+\epsilon}
	\max_{\theta \in \mathcal{P}(R^{-1/2})}\|f\|_{\avg}^{\frac{1}{13}-\epsilon}.
	\end{split}
	\end{equation}
We will cover $B_R$ with $\simeq R^{3\delta}$ balls $B_k$ of radius $R^{1-\delta}$. Let $\T_{k,\mathrm{tang}}$ denote the collection of all tubes $T \in \T$ such that $B_k \cap W \cap T \neq \emptyset$ and
\begin{equation}
\mathrm{Angle}(v(T),\mathcal{T}_z(\mathcal{Z}(P))) \leq R^{-1/2+2\delta},
\end{equation}
for every $z$ that is a non-singular point of $\mathcal{Z}(P)$ lying in $2B_k \cap 10T$. Here $\mathcal{T}_z(\mathcal{Z}(P))$ is the tangent plane of $\mathcal{Z}(P)$ at the point $z$.
Let 
$\mathbb{T}_{k,\mathrm{trans}}$ denote the collection of all tubes $T \in \mathbb{T}$ such that 
$B_k \cap W \cap T \neq \emptyset$ and
\begin{equation}
\mathrm{Angle}(v(T),\mathcal{T}_z(\mathcal{Z}(P))) > R^{-1/2+2\delta},
\end{equation}
for some non-singular point $z$ of $\mathcal{Z}(P)$ lying in $2B_k \cap 10T$. We use the notations $\T_{k,-}$ and $\T_{k,+}$ for $\T_{k,\mathrm{tang}}$ and $\T_{k,\mathrm{trans}}$, respectively. It is easy to see that each tube $T$ with $B_k \cap W \cap T \neq \emptyset$ belongs to either $\T_{k,+}$ or $\T_{k,-}$. Denote $\T_{k,+}(\tau):=\T(\tau) \cap \T_{k,+}$ and $\T_{k,-}(\tau):=\T(\tau) \cap \T_{k,-}$.
We will use two geometric estimates. 

\begin{lem}[Lemma 3.5 and 3.6 in \cite{MR3454378}]\label{Translemma}
\begin{enumerate}
\item []
    \item Each tube $T \in \T$ belongs to at most $R^{O(\dd)}$ different sets $\T_{k,+}$.
    \item For each $k$, the number of different $\theta$ with $\T_{k,-} \cap \T(\theta) \neq \emptyset$ is at most $R^{1/2+O(\delta)}$.
\end{enumerate}
\end{lem}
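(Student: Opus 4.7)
The lemma is identical in content to Lemmas 3.5 and 3.6 of \cite{MR3454378}, and the plan is to reproduce those proofs in our setting. The only inputs used are (i) Guth's polynomial partitioning furnishes a polynomial $P$ of degree $D=R^{\dd}$ that is a product of non-singular factors, and (ii) standard facts about the wave packets $T\in\mathbb{T}$, namely that each $T$ is a tube of radius $R^{1/2+\delta}$ and length $\lesssim R$, and that the $\theta$'s form a partition of $[-1,1]^2$ into $R^{-1/2}$-squares whose normals $v_\theta$ are $R^{-1/2}$-separated on $S^2$. Both are the same for us as in \cite{MR3454378}.

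For part (1), the idea is a B\'ezout-type crossing count. Fix $T\in\mathbb{T}$ and let $\ell_T$ be its core line. Since $P$ has degree $D$, either $\ell_T\subset \mc{Z}(P)$ (a case one handles by an arbitrarily small generic perturbation of $T$, at the cost of an absorbable error) or $\ell_T$ meets $\mc{Z}(P)$ in at most $D$ points. Suppose $T\in \mathbb{T}_{k,+}$; then there exists a non-singular $z\in 2B_k\cap 10T$ with $\angle(v(T),\mc{T}_z\mc{Z}(P))>R^{-1/2+2\delta}$. By transversality, the corresponding connected component of $\mc{Z}(P)\cap 10T$ near $z$ has length along $T$ of order
\[
\lesssim \frac{R^{1/2+\delta}}{R^{-1/2+2\delta}}=R^{1-\delta},
\]
which is the same scale as the radius of $B_k$. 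Hence each of the $\lesssim D$ transverse crossing regions of $T$ with $\mc{Z}(P)$ is contained in $O(1)$ of the balls $B_k$, and the total number of $\mathbb{T}_{k,+}$'s containing $T$ is $\lesssim D=R^{\dd}=R^{O(\dd)}$.

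For part (2), the strategy is to count $\theta$'s via the Gauss image of the variety in the ball $B_k$. If $T\in\mathbb{T}_{k,-}\cap\mathbb{T}(\theta)$, then the direction $v_\theta$ lies in the $R^{-1/2+2\delta}$-neighborhood (in $S^2$) of $\mc{T}_{z_0}\mc{Z}(P)$ for at least one non-singular point $z_0\in 2B_k\cap 10T$. Let
\[
\Gamma_k:=\bigl\{\mc{T}_z\mc{Z}(P): z\in 2B_k\cap \mc{Z}(P),\ z\ \text{non-singular}\bigr\}\subset Gr(2,3)\cong S^2/\pm.
\]
Because each non-singular factor of $P$ is a two-dimensional variety and $\deg P\le D$, a standard Gauss-map / Wongkew-type argument (see Section 3 of \cite{MR3454378}) bounds the $\rho$-neighborhood of $\Gamma_k$ in $S^2$ by $R^{O(\dd)}\rho$. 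Applying this with $\rho=R^{-1/2+2\delta}$ and using that the directions $v_\theta$ are $R^{-1/2}$-separated yields
\[
\#\{\theta: \mathbb{T}_{k,-}\cap\mathbb{T}(\theta)\neq\emptyset\}\lesssim R^{O(\dd)}\cdot R^{-1/2+2\delta}\cdot R\le R^{1/2+O(\delta)},
\]
since $\dd\ll\delta$ absorbs the $R^{O(\dd)}$ factor into $R^{O(\delta)}$.

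The only non-trivial obstacle is making the Gauss-image estimate in part (2) quantitative and uniform in the ball $B_k$; this is where one leans most directly on the algebraic-geometry lemmas of \cite{MR3454378}, which we can invoke as a black box since the polynomial $P$ here is produced by exactly the same polynomial partitioning theorem (Theorem \ref{polynomialpartitioning}).
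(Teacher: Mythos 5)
Your proposal takes the same route as the paper: the paper provides no proof of Lemma \ref{Translemma} at all, simply tagging it as ``Lemma 3.5 and 3.6 in \cite{MR3454378}'' and invoking Guth's results directly. This is valid because the polynomial $P$, the wall $W$, the balls $B_k$, and the wave-packet geometry are set up exactly as in Guth's paper, so his proofs transfer without modification. Your closing step --- invoking Guth's Section~3 as a black box --- is precisely what the paper does.

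That said, your two sketches would not stand on their own as replacements for Guth's arguments. In part (1), a B\'ezout count on the core line $\ell_T$ alone does not account for transverse sheets of $\mc{Z}(P)\cap 10T$ that fail to cross $\ell_T$; Guth's Lemma 3.5 deals with this more carefully. The gap in part (2) is sharper: the constraint on $v_\theta$ is that it lie within $R^{-1/2+2\delta}$ of the great circle $\mc{T}_z\mc{Z}(P)\cap S^2$, i.e.\ that $v_\theta$ be nearly \emph{perpendicular} to some normal in the Gauss image $\Gamma_k$, not that $v_\theta$ lie near $\Gamma_k$ itself, so bounding $|\mc{N}_\rho(\Gamma_k)|$ is not the relevant quantity. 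Moreover, the asserted bound $|\mc{N}_\rho(\Gamma_k)|\lesssim R^{O(\dd)}\rho$ is not a Wongkew estimate: Wongkew controls tubular neighborhoods of varieties in the ambient $\R^3$, not the spherical measure of a Gauss image, which for a degree-$D$ surface patch in a ball of radius $R^{1-\delta}$ can perfectly well be two-dimensional (e.g.\ for a spherical cap). Guth's actual Lemma 3.6 proof uses the requirement that the angle condition hold for \emph{every} $z\in 2B_k\cap 10T\cap\mc{Z}(P)$ --- not just one --- to rule out such high-curvature configurations, and this is essential. Your final numerology $R^{O(\dd)}\cdot R^{-1/2+2\delta}\cdot R$ lands in the right place, but the intermediate steps as written are not correct; the proposal works only because you ultimately defer to \cite{MR3454378}.
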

For a subcollection $I$ of the squares $\tau$, we define $f_I$ by
\begin{equation}
f_I=\sum_{\tau \in I}f_{\tau},
\end{equation}
and define 
\begin{equation}
f_{\tau,k,+}:= \sum_{\substack{T \in \mathbb{T}_{k,+}(\tau) }}f_T,\;\;\; f_{k,+}:=\sum_{\tau \in \mathcal{P}(K^{-1})} f_{\tau,k,+},\;\;\;
f_{I,k,+}:=\sum_{\tau \in I } f_{\tau,k,+}.
\end{equation}
Moreover, define $f_{\tau,k,-}, f_{k,-}, f_{I,k,-}$ similarly.
We also define the bilinear function
\begin{equation}
\mathrm{Bil}(Ef):=\sum_{\substack{(\tau,\tau')\in \mathcal{P}(K^{-1}) \times \mathcal{P}(K^{-1}):  \\
(\tau,\tau') \, \text{is a good pair}}} |Ef_{\tau}|^{1/2}|Ef_{\tau'}|^{1/2}.
\end{equation}
Now we are ready to run the broad-narrow analysis of Bourgain and Guth \cite{MR2860188} and Guth \cite{MR3454378}. We will use geometric lemma \ref{badpairlemma}  here and see the motivation of our definition of broad points.

\begin{lem}\label{bourgainguth}
Suppose that $\alpha \in (0,1)^{d+2}$ satisfies 
\begin{equation}
    K_j^{-\epsilon} \leq \alpha_j \leq K_{j+1}^{-100d},\;\;\;\; K_{d+1}^{-\epsilon} \leq \alpha_{d+1} \leq 10^{-100d}
\end{equation}
for every $j=0,\ldots,d$.
Then for every $x \in B_k \cap W$
\begin{equation}
\begin{split}
\mathrm{Br}_{\alpha}Ef(x) &\leq 2\sum_I \mathrm{Br}_{200d^2\alpha}Ef_{I,k,+}(x)
\\&+K_0^{100}\mathrm{Bil}(Ef_{k,-})(x) + R^{-50}\|f\|_2.
\end{split}
\end{equation}
The summation $\sum_I$ runs over all possible collections of squares with sidelength $K_0^{-1}$. 
\end{lem}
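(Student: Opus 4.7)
The plan is to carry out a Bourgain--Guth broad-narrow decomposition on the tangential wave-packet contribution $Ef_{k,-}$, using Lemma \ref{badpairlemma} to cluster the narrow configurations and the $\alpha$-broadness hypothesis to control them.

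Fix $x \in B_k \cap W$ in the support of $\mathrm{Br}_\alpha Ef$ (otherwise the bound is trivial). By items (2)--(3) of Proposition \ref{wavepacketdecomposition},
\[
Ef(x) = Ef_{k,+}(x) + Ef_{k,-}(x) + O(R^{-990}\|f\|_{L^2}),
\]
since tubes not in $\mathbb{T}_{k,+}\cup\mathbb{T}_{k,-}$ avoid $x$. Write $Ef_{k,-}(x) = \sum_\tau Ef_{\tau,k,-}(x)$ and call $\tau$ \emph{significant} if $|Ef_{\tau,k,-}(x)|>K^{-10}|Ef(x)|$.

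First, if some pair of significant squares $(\tau_1,\tau_2)$ is a good pair, then $|Ef_{\tau_1,k,-}(x)|^{1/2}|Ef_{\tau_2,k,-}(x)|^{1/2}>K^{-10}|Ef(x)|$, so $K_0^{100}\mathrm{Bil}(Ef_{k,-})(x)\gtrsim K^{90}|Ef(x)|$ already dominates the target and we are done. Otherwise, all significant squares at distance $>K_L^{-1}$ from a reference $\tau^*$ (with some $\zeta^*\in\tau^*$) form bad pairs with $\tau^*$. Applying Lemma \ref{badpairlemma}, the union of significant squares is then contained in a $T_{\zeta^*}$-image of either $O(K_{j+1}^3)$ squares at scale $K_j^{-1}$ across $j=2,\ldots,d+1$, or at most two bad strips. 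Let $I\subset\mathcal{P}(K^{-1})$ be the complement of this narrow region. No $\tau\in I$ is significant, so $\sum_{\tau\in I}Ef_{\tau,k,-}(x)$ has magnitude at most $K^{-8}|Ef(x)|$, and therefore $\sum_{\tau\in I}Ef_\tau(x) = Ef_{I,k,+}(x)+O(K^{-5}|Ef(x)|)$.

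For the complementary narrow piece $\sum_{\tau\notin I}Ef_\tau(x)$, the $\alpha$-broadness conditions \eqref{0511e3.2}--\eqref{0507e3.3} bound each cluster by $\alpha_j|Ef(x)|$; the hypothesis $\alpha_j\leq K_{j+1}^{-100d}$ absorbs the $O(K_{j+1}^3)$ pieces at scale $j$, and combining the $d+2$ scales with the two-bad-strip contribution yields a total at most $\tfrac14|Ef(x)|$. Rearranging gives $|Ef(x)|\leq 2|Ef_{I,k,+}(x)|+O(R^{-50}\|f\|_{L^2})$. It remains to verify that $x$ is $200d^2\alpha$-broad for $Ef_{I,k,+}$: since $|Ef_{I,k,+}(x)|\geq\tfrac12|Ef(x)|$, each broadness condition transfers with at most a constant-factor loss, because the differences $Ef_{\tau_j}(x)-(Ef_{I,k,+})_{\tau_j}(x)$ consist of wave packets over either $\mathbb{T}_{k,-}$ or $\tau\notin I$, both already controlled by the same broadness conditions.

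I expect the main technical obstacle to be this last verification: confirming that each of the broadness conditions in \eqref{0511e3.2}--\eqref{0507e3.3} transfers to $Ef_{I,k,+}$ with only a $200d^2$-factor deterioration. The bookkeeping has to track how the residual tangential and narrow pieces intersect the ``bad strip $\cap$ $K_j$-square'' patterns appearing in the definition of broadness, and this is exactly where the multi-scale arrangement $K_{d+1}\ll\cdots\ll K_0$ and the bound on bad strips per $K_L^{-1}$-square from Lemma \ref{linecount} (together with the choice $M=10^{20d}\epsilon_0^{-1}$) enter crucially.
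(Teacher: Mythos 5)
Your overall strategy matches the paper's: locate ``significant'' tangential squares, dispatch the good-pair case via the bilinear term, cluster the remaining squares via Lemma~\ref{badpairlemma}, and transfer broadness to $Ef_{I,k,+}$. But there is a genuine gap in how you invoke Lemma~\ref{badpairlemma}, and you explicitly leave the broadness transfer unverified.

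The gap: Lemma~\ref{badpairlemma} describes the set $\mathbb{B}(\zeta_1,\zeta_2)$ for a \emph{single fixed pair} $(\zeta_1,\zeta_2)$. Knowing merely that each significant $\tau$ forms a bad pair with $\tau^*$ gives, for each $\tau$ separately, only that $T_{\zeta^*}(\zeta(\tau))$ lies in its own set $\mathbb{B}(\zeta^*,\zeta(\tau))$ --- a priori a different set for each $\tau$, so Lemma~\ref{badpairlemma} does not directly contain their union in one small collection of squares or two bad strips. What actually forces these sets to coincide is the wall geometry: since $x\in W$ and all these wave packets are tangential, the normal vectors to $S$ at the points of every significant $\tau$ are within $O(R^{-1/2+O(\delta)})$ of a single tangent plane $\mathcal{T}_z(\mathcal{Z}(P))$, which by \eqref{4.30} forces the corresponding points $T_{\zeta^*}(\zeta(\tau))$ into $\mathcal{N}_{R^{-1/2+O(\delta)}}(\mathcal{Z}(P_{\zeta^*,\zeta^{**}}))$ for a second reference $\zeta^{**}$. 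Combined with \eqref{badpairvariety2} this yields $\mathcal{Z}(P_{\zeta^*,\zeta^{**}})=\mathcal{Z}(P_{\zeta^*,\zeta})$, hence $\mathbb{B}(\zeta^*,\zeta)=\mathbb{B}(\zeta^*,\zeta^{**})$ for every such $\zeta$. Only at that point can Lemma~\ref{badpairlemma} be applied once to the single set $\mathbb{B}(\zeta^*,\zeta^{**})$. Without this tangency-and-coincidence step the clustering you want simply does not follow. You should also separate out, before introducing $\zeta^{**}$, the configuration where every significant square lies within $O(K_{d+1}^{-1})$ of $\tau^*$ (the paper's Case~2.1), since bad pairs are only defined at separation $\gtrsim K_L^{-1}$ and that subcase is handled purely by the $K_{d+1}$-scale broadness hypothesis.

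On the broadness transfer: you flag it yourself as the ``main technical obstacle'' and leave it unverified. This is where the proof actually spends its effort --- one has to split each of the $d+2$ broadness constraints against the excluded ``Bad'' region (either the multi-scale squares or the two bad strips plus $\upsilon$), apply the $\alpha$-broadness of $Ef$ at the right scale, and use both the lower bound $\alpha_j\geq K_j^{-\epsilon}$ and the upper bound $\alpha_j\leq K_{j+1}^{-100d}$ to absorb the $O(K_{j+1}^3)$ squares at scale $K_j$ into the factor $200d^2$; the bad-strip case additionally needs the cap $M=10^{20d}\epsilon_0^{-1}$ from Lemma~\ref{linecount} and an inclusion-exclusion over the finitely many strips. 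As written, your proposal asserts the conclusion without carrying out this bookkeeping.
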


\begin{proof}[Proof of Lemma \ref{bourgainguth}]
Suppose that $x$ is an $\alpha$-broad point of $Ef$. We may assume that
\begin{equation}
|Ef(x)| \geq R^{-100}\|f\|_2.
\end{equation}
Otherwise, the inequality is trivial. We define the non-significant tangential part
\begin{equation}\label{0522e4.36}
\mathcal{C}=\big\{\tau \in \mathcal{P}(K^{-1}) : |Ef_{\tau,k,-}(x)| < K^{-100}|Ef(x)| \big\}.  
\end{equation}
We consider several cases.

\subsubsection*{Case 1} $\mathcal{C}^c$ is empty. 

In this case, by the triangle inequality, we obtain
\begin{equation}
\begin{split}
|Ef(x)| &\leq  \Big|\sum_{\tau \in \mathcal{P}(K^{-1})}Ef_{\tau,k,+}(x)\Big|+\sum_{\tau \in \mathcal{P}(K^{-1})}|Ef_{\tau,k,-}(x)|+ R^{-200}\|f\|_2
\\& \leq \Big|\sum_{\tau \in \mathcal{P}(K^{-1})}Ef_{\tau,k,+}(x)\Big|+ K^{-50}|Ef(x)| +R^{-200}\|f\|_2.
\end{split}
\end{equation}
By rearranging the terms, we obtain 
\begin{equation}\label{4.10}
|Ef(x)| \leq 2\Big|\sum_{\tau \in \mathcal{P}(K^{-1})}Ef_{\tau,k,+}(x)\Big|.
\end{equation}
It remains to show that $x$ is a $200d^2\alpha$-broad point of $Ef_{k,+}$. By definition, it follows from
\begin{equation}\label{4.14}
    \max_{L_1,\ldots,L_{M} \in \mathbb{L} } \Big|\sum_{\tau \in \mathcal{P}(K^{-1},\, \cap_{i=1}^{M} L_i) } Ef_{\tau,k,+}(x)\Big| \leq 200d^2\alpha_{d+1} \Big|\sum_{\tau \in \mathcal{P}(K^{-1})}Ef_{\tau,k,+}(x)\Big|
\end{equation}
and
\begin{equation}\label{4.15}
\begin{split}
    &
    \max_{\upsilon_j \in \mathcal{P}(K_j^{-1})}
        \max_{L_1,\ldots,L_{M} \in \mathbb{L} } \Big|\sum_{\tau \in \mathcal{P}(K^{-1},\, \upsilon_j \cap(\cap_{i=1}^{M} L_i))} Ef_{\tau,k,+}(x)\Big|
    \\&+
    \max_{\tau_j \in \mathcal{P}(K_j^{-1})} |\sum_{\tau \in \mathcal{P}(K^{-1},\tau_j )} Ef_{\tau,k,+}(x)|
    \leq 200d^2\alpha_j \Big|\sum_{\tau \in \mathcal{P}(K^{-1})}Ef_{\tau,k,+}(x)\Big|
\end{split}
\end{equation}
for every $j=0,\ldots,d+1$.
Let us prove \eqref{4.14}. By \eqref{4.10} it suffices to show that
\begin{equation}
    \max_{L_1,\ldots,L_{M} \in \mathbb{L} } \Big|\sum_{\tau \in \mathcal{P}(K^{-1},\, \cap_{i=1}^{M} L_i) } Ef_{\tau,k,+}(x)\Big| \leq 100d^2\alpha_{d+1} |Ef(x)|.
\end{equation}
It follows by the fact that $\tau \in \mathcal{C}$ and $x$ is an $\alpha$-broad point of $Ef(x)$. This gives the proof of \eqref{4.14}. The inequality \eqref{4.15} can be proved in exactly the same way. We leave out the details.

\subsubsection*{Case 2}
$\mathcal{C}^c$ is not empty.

We pick $\tau^* \in \mathcal{C}^c$ and take the dyadic square containing $\tau^*$ with sidelength $K_{d+1}^{-1}$. We take a square $\upsilon$ with sidelength $8K_{d+1}^{-1}$ so that it contains the dyadic square and the distance between the boundary of the dyadic square and the boundary of $\upsilon$ becomes greater than $2K_{d+1}^{-1}$. We may assume that the square $\upsilon$ is a union of 64 dyadic squares with side length $K_{d+1}^{-1}$. We have two subcases. 

\subsubsection*{Case 2.1}
$
\bigcup_{\tau \in \mathcal{C}^c}\tau \subset  \upsilon.
$

Note that all the elements $\tau \subset \upsilon^c$ belong to $\mathcal{C}$.
Since $x$ is an $\alpha$-broad point of $Ef$ and $x \in B_k$, we obtain
\begin{equation}
\begin{split}
|Ef(x)| &\leq \Big|\sum_{\tau \in \mathcal{P}(K^{-1},\, \upsilon) } Ef_{\tau}(x)\Big| + \Big|\sum_{\tau \in \mathcal{P}(K^{-1},\, \upsilon^c)} Ef_{\tau}(x)\Big|
\\&
\leq 64 \alpha_{d+1} |Ef(x)|+
\Big|\sum_{\tau\in \mathcal{P}(K^{-1},\, \upsilon^c )} Ef_{\tau}(x)\Big|
\\&
\leq
\big(65
\alpha_{d+1}+K^{-50}\big) |Ef(x)|+
\Big|\sum_{\tau \in \mathcal{P}(K^{-1},\, \upsilon^c) }Ef_{\tau,k,+}(x)\Big|+R^{-200}\|f\|_2.
\end{split}
\end{equation}
Recall that $\alpha_{d+1} \leq 10^{-100d}$. This implies
\beq\label{middlestep1}
|Ef(x)| \leq 2\Big|\sum_{\tau\in \mathcal{P}(K^{-1},\, \upsilon^c) }Ef_{\tau,k,+}(x)\Big|.
\endeq
It remains to show that $x$ is a $200d^2\alpha$-broad point of the function  $\sum_{\tau \subset \upsilon^c }Ef_{\tau,k,+}$. By definition and by applying \eqref{middlestep1} and the fact that $\tau\in \mc{C}$, it suffices to show
\begin{equation}\label{4.16}
    \max_{L_1,\ldots,L_{M} \in \mathbb{L} } \Big|\sum_{\substack{ \tau \in \mathcal{P}(K^{-1},\, (\cap_{i=1}^{M} L_i) \cap  \upsilon^c) } } Ef_{\tau}(x)\Big| \leq 90d^2\alpha_{d+1} |Ef(x)|
\end{equation}
and
\begin{equation}\label{4.17}
\begin{split}
    &\max_{\tau_j \in \mathcal{P} (K_j^{-1})} |Ef_{\tau_j}(x)|
    \\&+
    \max_{\upsilon_j \in \mathcal{P}(K_j^{-1})}
        \max_{L_1,\ldots,L_{M} \in \mathbb{L} } \Big|\sum_{\substack{\tau \in \mathcal{P}(K^{-1},\,
        \upsilon_j \cap (\cap_{i=1}^{M} L_i )) }} Ef_{\tau}(x)\Big|
    \leq 90d^2\alpha_j |Ef(x)|,
\end{split}
\end{equation}
for every $j=0,\ldots,d+1$. Note that \eqref{4.17} immediately follows from the assumption that $x$ is an $\alpha$-broad point of $Ef(x)$.
Let us prove \eqref{4.16}. By the triangle inequality, the left hand side of \eqref{4.16} is bounded by
\begin{equation}
\begin{split}
    \max_{L_1,\ldots,L_{M} \in \mathbb{L} } \Big|\sum_{\tau \in \mathcal{P}(K^{-1},\, \cap_{i=1}^{M} L_i)} Ef_{\tau}(x)\Big|
    +
    \max_{L_1,\ldots,L_{M} \in \mathbb{L} } \Big|\sum_{\tau \in \mathcal{P}(K^{-1},\, \upsilon \cap(\cap_{i=1}^{M} L_i))  } Ef_{\tau}(x)\Big|.
\end{split}
\end{equation}
Since $\upsilon$ is a union of 64 dyadic squares with side length $K_{d+1}^{-1}$ and $x$ is an $\alpha$-broad point of $Ef$, by the triangle inequality, it is further bounded by $65\alpha_{d+1}|Ef(x)|$ and this gives the proof of \eqref{4.16}.

\subsubsection*{Case 2.2}
There exists $\tau^{**} \in \mathcal{C}^c$ such that $\mathrm{dist}(\tau^*,\tau^{**}) \geq 2K_{d+1}^{-1}$.  

If
 $\mathcal{C}^{c}$ contains two squares $\tau_1$ and $\tau_2$ such that the pair $(\tau_1,\tau_2)$ is a good pair, then we obtain
\begin{equation}
|Ef(x)| \leq K^{100}|Ef_{\tau_1,k,-}(x)|^{1/2}|Ef_{\tau_2,k,-}(x)|^{1/2} \leq K^{100}\mathrm{Bil}(Ef_{k,-})(x).
\end{equation}
This gives the desired result.
Hence, we may assume that
there does not exist two squares $\tau_1,\tau_2 \in \mathcal{C}^c$ such that $(\tau_1,\tau_2)$ is  a good pair.

Since $x \in W$ and  non-singular points are dense on $\mathcal{Z}(P)$, by the definition of tangent wave packets, we can find a non-singular point $z$ of $\mathcal{Z}(P)$ such that 
\begin{equation}
|x-z| \leq 2R^{1/2+\delta} \text{ and }
\mathrm{Angle}(v(T),\mathcal{T}_z(\mathcal{Z}(P))) \leq R^{-1/2+2\delta}
\end{equation} 
for every $T \in \T_{k,-}$ with $z \in 10T$.
 Since $\tau^*,\tau^{**} \in \mathcal{C}^c$, they contain points such that the angles between the normal vectors of the surface associated with $h$ at the points and $\mathcal{T}_z(\mathcal{Z}(P))$ are smaller than $2R^{-1/2+2\delta}$. We denote the points in $\tau^{*}$ and $\tau^{**}$ by $\zeta^*$ and $\zeta^{**}$, respectively. Write $\overline{\zeta^{**}}:=(\overline{\zeta^{**}_1},\overline{\zeta^{**}_2}):=T_{\zeta^*}(\zeta^{**})$. Here, $T_{\zeta^*}$ is the map defined in \eqref{0422e2.12}. Without loss of generality, we may assume that $|\overline{\zeta^{**}_1}| \geq |\overline{\zeta^{**}_2}|$. 
 Note that
 \begin{equation}\label{4.30}
\begin{split}
    \{\theta \in \mathcal{P}(R^{-1/2})&: \mathrm{Angle}(T_z(\mathcal{Z}(P)),v_{\theta}) \leq 2R^{-1/2+2\delta} \}
    \\& \subset
    \{T_{\zeta^*}^{-1}(\xi,\eta): |P_{\zeta^*,\zeta^{**}}( \xi,\eta)|<R^{-1/2+O(\delta)}\}
\end{split}
 \end{equation}
because \eqref{badpairvariety2} says that $|P_{\zeta^*,\zeta^{**}}(\xi,\eta)|$ is comparable to the volume of the parallelogram generated by  the normal vectors of the surface associated with $h$ at $\zeta^*,\zeta^{**}$, and {$T_{\zeta^*}^{-1}(\xi,\eta)$}. Here, $v_{\theta}$ is the normal vector to the surface $S$ associated with $h$ at the point $(w_{\theta},h(w_{\theta}))$ and $w_{\theta}$ is the bottom corner of $\theta$. Recall that
\begin{equation}
    P_{\zeta^*,\zeta^{**}}(\xi,\eta)=\partial_1h_{\zeta^*}(\xi,\eta)\partial_2 h_{\zeta^*}(\overline{\zeta^{**}})-\partial_2h_{\zeta^*}(\xi,\eta)\partial_1 h_{\zeta^*}(\overline{\zeta^{**}}).
\end{equation}
Since $\partial_1h_{\zeta^*}(\xi,\eta)=\eta \cdot (1+O(\epsilon_0))+\partial_1h_{\zeta^*}(\xi,0)$ and $\partial_2h_{\zeta^*}(\xi,\eta)=\xi \cdot (1+O(\epsilon_0))+\partial_2h_{\zeta^*}(0,\eta)$, by \eqref{secondderivativelowerbound}, the set on the second line of \eqref{4.30} is contained in
\begin{equation}
    T_{\zeta^*}^{-1}(
    \mathcal{N}_{R^{-1/2+O(\delta)}}(\mathcal{Z}(P_{\zeta^*,\zeta^{**}}))).
\end{equation}
Hence, for every $\tau \in \mathcal{C}^c$ with $\mathrm{dist}(\tau^*,\tau) \geq 2K_{d+1}^{-1}$, there exists
\begin{equation}
    \zeta=\zeta(\tau) \in 2\tau \cap T_{\zeta^*}^{-1}(\mathcal{Z}(P_{\zeta^*,\zeta^{**}})).
\end{equation}
Since $T_{\zeta^*}(\zeta^*),T_{\zeta^*}(\zeta^{**}),T_{\zeta^*}(\zeta) \in \mathcal{Z}(P_{\zeta^*,\zeta^{**}})$,  we obtain
\begin{equation}\label{transversality}
    \det
    \begin{pmatrix}
    \partial_1h_{\zeta^*}(T_{\zeta^*}(\zeta^*)) &  \partial_2h_{\zeta^*}(T_{\zeta^*}(\zeta^*)) & -1
    \\
    \partial_1h_{\zeta^*}(T_{\zeta^*}(\zeta^{**})) &  \partial_2h_{\zeta^*}(T_{\zeta^*}(\zeta^{**})) & -1
    \\\partial_1h_{\zeta^*}(T_{\zeta^*}(\zeta)) &  \partial_2h_{\zeta^*}(T_{\zeta^*}(\zeta)) & -1
    \end{pmatrix}=0.
\end{equation}
Since $(\tau^*,\tau)$ is a bad pair, $(\zeta^*,\zeta)$ is a bad pair.
By \eqref{badpairvariety2} and \eqref{transversality}, we obtain $\mathcal{Z}(P_{\zeta^*,\zeta^{**}})=\mathcal{Z}(P_{\zeta^*,\zeta})$ and $\mathbb{B}(\zeta^*,\zeta)=\mathbb{B}(\zeta^*,\zeta^{**})$. Since $(\zeta^*,\zeta)$ is a bad pair, $T_{\zeta^*}(\zeta) \in \mathbb{B}({\zeta^*,\zeta})=\mathbb{B}({\zeta^*,\zeta^{**}})$.
We apply Lemma \ref{badpairlemma}. We consider the first case in Lemma \ref{badpairlemma}: There exist some squares $\upsilon_{j,i}' \in \mathcal{P}(K_j^{-1},\R^2)$  such that
\begin{equation}
\begin{split}
    &\{T_{\zeta^*}(\zeta)=T_{\zeta^*}(\zeta(\tau)): \tau \in \mathcal{C}^c,\; \mathrm{dist}(\tau^*,\tau) \geq 2K_{d+1}^{-1} \} 
    \\&\subset \mathbb{B}(\zeta^*,\zeta^{**})
    \subset 
\Big(
    \bigcup_{i=1}^{d^2}T_{\zeta^*}(\upsilon_{d+1,i}')\Big) \cup
    \Big(\bigcup_{j=2}^{d-1} \bigcup_{i=1}^{O(K_{j+1}^{3}) }T_{\zeta^{*}}(\upsilon_{j,i}')\Big).
\end{split}
\end{equation}
We take $T_{\zeta^*}^{-1}$ on the both sides, enlarge squares $\upsilon_{j,i}'$, include the square $\upsilon$ defined at the discussion at the beginning of Case 2, and we obtain
\begin{equation}
\bigcup_{\tau \in \mathcal{C}^c} \tau \subset  
\Big(
    \bigcup_{i=1}^{27d^2}\upsilon_{d+1
    ,i}'\Big) \cup
    \Big(\bigcup_{j=2}^{d-1} \bigcup_{i=1}^{K_{j+1}^{4} }\upsilon_{j,i}'\Big)=:\mathrm{Bad}
\end{equation}
for some $\upsilon_{j,i}' \in \mathcal{P}(K_j^{-1},[-1,1]^2)$.

By the triangle inequality,
\begin{equation}\label{4.36}
|Ef(x)| \leq  \Big|\sum_{\tau \in \mathcal{P}(K^{-1},\, \mathrm{Bad}) }Ef_{\tau}(x)\Big|+
\Big|\sum_{\tau \in \mathcal{P}(K^{-1}):\tau \cap \mathrm{Bad}^c \neq \emptyset }Ef_{\tau}(x)\Big|.
\end{equation}
By  the definition of an $\alpha$-broad point, the upper bound of $\alpha_j$, and the triangle inequality, we obtain
\begin{equation}\label{0501.423}
\begin{split}
    &\Big|\sum_{\tau \in \mathcal{P}(K^{-1},\, \mathrm{Bad}) }Ef_{\tau}(x)\Big|
    \\&\leq
    \sum_{i=1}^{27d^2}
    \Big|\sum_{\tau \in \mathcal{P}(K^{-1},\, \mathrm{\upsilon_{d+1,i}'}) }Ef_{\tau}(x)\Big|+
    \sum_{j=2}^{d-1}
    \sum_{i=1}^{K_{j+1}^4}
    \Big|\sum_{\tau \in \mathcal{P}(K^{-1},\, \upsilon_{j,i}' )}Ef_{\tau}(x)\Big|
    \\&\leq 
    \big(27d^2 \alpha_{d+1}+\sum_{j=2}^{d-1}K_{j+1}^4\alpha_{j} \big)|Ef(x)| \leq 2^{-1}|Ef(x)|.
\end{split}
\end{equation}
Hence, by combining \eqref{4.36} and \eqref{0501.423}, and by $x \in B_k$, we obtain
\begin{equation}\label{4.28}
|Ef(x)| \leq 2
\Big|\sum_{\tau \in \mathcal{P}(K^{-1}):\tau \cap \mathrm{Bad}^c \neq \emptyset }Ef_{\tau,k,+}(x)\Big|.
\end{equation}
It remains to show that $x$ is a $200d^2\alpha$-broad point of
$\Big|\sum_{\tau \cap \mathrm{Bad}^c \neq \emptyset }Ef_{\tau,k,+}(x)\Big|$. 
By the above inequality \eqref{4.28} and by the fact that all the squares $\tau$ with $\tau \cap \mathrm{Bad}^c \neq \emptyset$ belong to $\mathcal{C}$, it suffices to show that
\begin{equation}\label{4.21}
    \max_{L_1,\ldots,L_{M} \in \mathbb{L} } \Big|\sum_{\substack{\tau \in \mathcal{P}(K^{-1},\, \cap_{i=1}^{M} L_i):\\ \tau \cap \mathrm{Bad}^c \neq \emptyset } } Ef_{\tau}(x)\Big| \leq 90d^2\alpha_{d+1} |Ef(x)|
\end{equation}
and
\begin{equation}\label{4.22}
\begin{split}
    &\max_{\tau_j \in \mathcal{P} (K_j^{-1})} |Ef_{\tau_j}(x)|
    \\&+
    \max_{\upsilon_j \in \mathcal{P}(K_j^{-1})}
        \max_{L_1,\ldots,L_{M} \in \mathbb{L} } \Big|\sum_{\substack{ \tau \in \mathcal{P}(K^{-1},\,
        \upsilon_j \cap (\cap_{i=1}^{M} L_i )):\\\tau \cap \mathrm{Bad}^c \neq \emptyset} } Ef_{\tau}(x)\Big|
    \leq 90d^2\alpha_j |Ef(x)|
\end{split}
\end{equation}
for every $j=0,\ldots,d+1$. 

Let us prove \eqref{4.21} first. By the triangle inequality, the left hand side of \eqref{4.21} is bounded by
\begin{equation}
    \begin{split}
        \max_{L_1,\ldots,L_{M} \in \mathbb{L} } \Big|\sum_{\tau \in \mathcal{P}(K^{-1},\, \cap_{i=1}^{M} L_i) } Ef_{\tau}(x)\Big|
        +\max_{L_1,\ldots,L_{M} \in \mathbb{L} }
        \Big|\sum_{\tau \in \mathcal{P}(K^{-1},\, (\cap_{i=1}^{M}L_i) \cap \mathrm{Bad})}Ef_{\tau} \Big|.
    \end{split}
\end{equation}
Since $x$ is an $\alpha$-broad point of $Ef$, as in \eqref{0501.423}, it is further bounded by
\begin{equation}
    \Big(30d^2\alpha_{d+1}+\sum_{j=2}^{d-1}K_{j+1}^4\alpha_j \Big)|Ef(x)|.
\end{equation}
Since this is bounded by $100d^2\alpha_{d+1}|Ef(x)|$ by the lower bound and upper bound of $\alpha_j$,
this gives the proof of \eqref{4.21}. 

Let us prove \eqref{4.22}.
The first term on the left hand side of \eqref{4.22} can be bounded by $\alpha_j|Ef(x)|$ by the definition of an $\alpha$-broad point. By the triangle inequality, the second term is bounded by
\begin{equation}\label{4.25}
    \begin{split}
        &\max_{\upsilon_j \in \mathcal{P}(K_j^{-1})}
        \max_{L_1,\ldots,L_{M} \in \mathbb{L} } \Big|\sum_{\tau \in \mathcal{P}(K^{-1},\, \upsilon_j \cap(\cap_{i=1}^{M} L_i)) } Ef_{\tau}(x)\Big|
        \\&+
        \max_{\upsilon_j \in \mathcal{P}(K_j^{-1})}
        \max_{L_1,\ldots,L_{M} \in \mathbb{L} } \Big|\sum_{\substack{ \tau \in \mathcal{P}(K^{-1},\,
        \upsilon_j \cap (\cap_{i=1}^{M} L_i ) \cap \mathrm{Bad}})} Ef_{\tau}(x)\Big|.
    \end{split}
\end{equation}
The first term of \eqref{4.25} can be bounded by $\alpha_j|Ef(x)|$ by the definition of an $\alpha$-broad point. For $j=0,1,2$, $\mathrm{Bad}$ does not play a role and the second term of \eqref{4.25} can be dealt with by the definition of an $\alpha$-broad point and it is bounded by $\alpha_j|Ef(x)|$.
For $j=3,\ldots,d$, by the triangle inequality, the second term of \eqref{4.25} is bounded by
\begin{equation}
\begin{split}
    &\max_{\upsilon_j \in \mathcal{P}(K_j^{-1})}
        \max_{L_1,\ldots,L_{M} \in \mathbb{L} } \Big|\sum_{\tau \in \mathcal{P}(K^{-1},\, \upsilon_j \cap(\cap_{i=1}^{M} L_i)) } Ef_{\tau}(x)\Big|
        \\&+
    \sum_{l=2}^{j-1}
    \sum_{i=1}^{K_{l+1}^4}
        \max_{L_1,\ldots,L_{M} \in \mathbb{L} } \Big|\sum_{\substack{ \tau \in \mathcal{P}(K^{-1},
        \upsilon_{l,i} \cap (\cap_{i=1}^{M} L_i )  })} Ef_{\tau}(x)\Big|.
    \end{split}
\end{equation}
We can apply the definition of an $\alpha$-broad point and it is bounded by
\begin{equation}
   \Big(\alpha_j+ \sum_{l=2}^{j-1}K_{l+1}^4 \alpha_l \Big)|Ef(x)|.
\end{equation}
This is futher bounded by $200d^2\alpha_{j}|Ef(x)|$.
For $j=d+1$, by the triangle inequality, the second term of \eqref{4.25} is bounded by
\begin{equation}
    \Big(50d^2\alpha_{d+1}+ \sum_{l=2}^{d-1}K_{l+1}^4 \alpha_l \Big)|Ef(x)|.
\end{equation}
This is futher bounded by $200d^2\alpha_{d+1}|Ef(x)|$.
This completes the proof of \eqref{4.22} and finishes the discussion for the first case of Lemma \ref{badpairlemma}.
\medskip

Let us consider the second case in Lemma \ref{badpairlemma}: There exist bad strips $L_j$ such that
\begin{equation}
    \bigcup_{\tau \in \mathcal{C}^c}\tau \subset \upsilon  \cup
    L_1' \cup L_2'=:\mathrm{Bad}_1,
\end{equation}
where $\upsilon$ is the square with side length $8K_{d+1}^{-1}$ defined at the beginning of the discussion of Case 2.
By the triangle inequality,
\begin{equation}
|Ef(x)| \leq  \Big|\sum_{\tau \in \mathcal{P}(K^{-1}, \mathrm{Bad}_1) }Ef_{\tau}(x)\Big|+
\Big|\sum_{\tau \in \mathcal{P}(K^{-1}):\tau \cap \mathrm{Bad}_1^c \neq \emptyset }Ef_{\tau}(x)\Big|.
\end{equation}
By inclusion-exclusion principle,
\begin{equation}
\begin{split}
    &\Big|\sum_{\tau \in \mathcal{P}(K^{-1},\, \mathrm{Bad}_1 )}Ef_{\tau}(x)\Big|
    \\&\leq 
    \Big|\sum_{\tau \in \mathcal{P}(K^{-1},\, L_1') }Ef_{\tau}(x)\Big|+
    \Big|\sum_{\tau \in \mathcal{P}(K^{-1},\, L_2') }Ef_{\tau}(x)\Big|+
    \Big|\sum_{\tau \in \mathcal{P}(K^{-1},\, L_1' \cap L_2') }Ef_{\tau}(x)\Big|
    \\&
    +\Big|\sum_{\tau \in \mathcal{P}(K^{-1},\, \upsilon) }Ef_{\tau}(x)\Big|
    +\Big|\sum_{\tau \in \mathcal{P}(K^{-1},\, L_1' \cap \upsilon) }Ef_{\tau}(x)\Big|+
    \Big|\sum_{\tau \in \mathcal{P}(K^{-1},\, L_2' \cap \upsilon) }Ef_{\tau}(x)\Big|
    \\&
    +
    \Big|\sum_{\tau \in \mathcal{P}(K^{-1},\, L_1' \cap L_2' \cap \upsilon) }Ef_{\tau}(x)\Big|.
\end{split}
\end{equation}
By the definition of an $\alpha$-broad point, it is futher bounded by $300\alpha_{d+1}|Ef(x)|$. Hence, we obtain
\begin{equation}\label{4.49}
    |Ef(x)| \leq 2
    \Big|\sum_{\tau \in \mathcal{P}(K^{-1}): \tau \cap \mathrm{Bad}_1^c \neq \emptyset }Ef_{\tau}(x)\Big|.
\end{equation}
It remains to show that $x$ is a $200d^2\alpha$-broad point of $\Big|\sum_{\tau \cap \mathrm{Bad}_1^c \neq \emptyset }Ef_{\tau}(x)\Big|$. It can be proved as in \eqref{4.21} and \eqref{4.22}. By definition and \eqref{4.49}, it suffices to prove 
\begin{equation}\label{4.50}
    \max_{L_1,\ldots,L_{M} \in \mathbb{L} } \Big|\sum_{\substack{ \tau \in \mathcal{P}(K^{-1},\,\cap_{i=1}^{M} L_i):\\ \tau \cap \mathrm{Bad}_1^c \neq \emptyset } } Ef_{\tau}(x)\Big| \leq 90d^2\alpha_{d+1} |Ef(x)|
\end{equation}
and
\begin{equation}\label{4.51}
\begin{split}
    &\max_{\tau_j \in \mathcal{P} (K_j^{-1})} |Ef_{\tau_j}(x)|
    \\&+
    \max_{\upsilon_j \in \mathcal{P}(K_j^{-1})}
        \max_{L_1,\ldots,L_{M} \in \mathbb{L} } \Big|\sum_{\substack{\tau \in \mathcal{P}(K^{-1},\,
        \upsilon_j \cap (\cap_{i=1}^{M} L_i )):\\\tau \cap \mathrm{Bad}_1^c \neq \emptyset} } Ef_{\tau}(x)\Big|
    \leq 90d^2\alpha_j |Ef(x)|
\end{split}
\end{equation}
for every $j=0,\ldots,d+1$. 

Let us first prove \eqref{4.50}. By inclusion-exclusion principle, the left hand side of \eqref{4.50} is bounded by
\begin{equation}
\begin{split}
    &\max_{L_1,\ldots,L_{M} \in \mathbb{L} } \Big|\sum_{\tau \in \mathcal{P}(K^{-1},\, \cap_{i=1}^{M} L_i) } Ef_{\tau}(x)\Big|
    \\&+
    \max_{L_1,\ldots,L_{M} \in \mathbb{L} } \Big|\sum_{\tau \in \mathcal{P}(K^{-1},\, \upsilon_0 \cap(\cap_{i=1}^{M} L_i)) } Ef_{\tau}(x)\Big|
    \\&
    +\sum_{j=1}^{2} \max_{L_1,\ldots,L_{M} \in \mathbb{L} } \Big|\sum_{\tau \in \mathcal{P}(K^{-1},\, (\cap_{i=1}^{M} L_i) \cap L_j' )}Ef_{\tau}(x)\Big|
    \\&+\max_{L_1,\ldots,L_{M} \in \mathbb{L} } \Big|\sum_{\tau \in \mathcal{P}(K^{-1},\, (\cap_{i=1}^{M} L_i) \cap L_1' \cap L_2') } Ef_{\tau}(x)\Big|
    \\&
    +\sum_{j=1}^{2} \max_{L_1,\ldots,L_{M} \in \mathbb{L} } \Big|\sum_{ \tau \in \mathcal{P}(K^{-1},\, (\cap_{i=1}^{M} L_i) \cap L_j' \cap \upsilon) }Ef_{\tau}(x)\Big|
    \\&+\max_{L_1,\ldots,L_{M} \in \mathbb{L} } \Big|\sum_{ \tau \in \mathcal{P}(K^{-1},\, (\cap_{i=1}^{M} L_i) \cap L_1' \cap L_2'\cap \upsilon) } Ef_{\tau}(x)\Big|.
\end{split}
\end{equation}
Since $L_1'$ and $L_2'$ are bad strips and at each point $x$ there are at most $M$ bad strips intersecting $x$, the above term is further bounded by
\begin{equation}
\begin{split}
     &4\max_{L_1,\ldots,L_{M} \in \mathbb{L} } \Big|\sum_{\tau \in \mathcal{P}(K^{-1},\, \cap_{i=1}^{M} L_i) } Ef_{\tau}(x)\Big|
     \\&+4
     \max_{L_1,\ldots,L_{M} \in \mathbb{L} } \Big|\sum_{\tau \in \mathcal{P}(K^{-1},\, \upsilon \cap(\cap_{i=1}^{M} L_i)) } Ef_{\tau}(x)\Big|
     .
\end{split}
\end{equation}
Since $x$ is an $\alpha$-broad point of $Ef$, it is bounded by $300\alpha_{d+1}|Ef(x)|$ and finishes the proof of \eqref{4.50}. The proof of \eqref{4.51} is identical to that for $\eqref{4.50}$. We leave out the details.
\end{proof}

By raising to the $p_0$-th power on both sides of Lemma \ref{bourgainguth}, integrating over $B_k \cap W$ and summing over all the balls $B_k$, we obtain
\begin{equation}
\begin{split}
\int_{B_R \cap W}
|\mathrm{Br}_{\alpha}Ef(x)|^{p_0}
&\lesssim K^{500} \sum_{k,I}\int_{B_k \cap W} \Big( \mathrm{Br}_{200d^2\alpha }Ef_{I,k,+} \Big)^{p_0}
\\& +K^{500}\sum_k \int_{B_k \cap W}\mathrm{Bil}(Ef_{k,-})^{p_0} 
+ R^{-50}\|f\|_2^{p_0}.
\end{split}
\end{equation}
Hence, by the inequality
\begin{equation}
     \max_{\theta \in \mathcal{P}(R^{-1/2})}\|f\|_{\avg} \lesssim R^{1/2}\|f\|_2,
\end{equation}
the wall estimate \eqref{Wallcasegoal} follows from the estimate of the transverse wave packets
\beq\label{transversewall}
\begin{split}
&\sum_{k,I}\int_{B_k \cap W} \Big( \mathrm{Br}_{200d^2\alpha}Ef_{I,k,+}\Big)^{p_0} 
\\&\leq C(K) R^{-\delta \epsilon} R^{\dt (\sum_j\log{(K_j^{\epsilon}\alpha_j)})p_0 }
R^{\epsilon p_0} 
\|f\|_{L^2}^{(\frac{12}{13}+\epsilon)p_0}
	\max_{\theta \in \mathcal{P}(R^{-1/2})}\|f\|_{\avg}^{(\frac{1}{13}-\epsilon)p_0},
\end{split}
\endeq
and the estimate of the bilinear operator
\beq\label{tangentialwall}
\sum_k \int_{B_k \cap W}\mathrm{Bil}(Ef_{k,-})^{p_0}
\leq C(K)R^{C\delta}
\|f\|_{L^2}^{(\frac{12}{13})p_0}
	\max_{\theta \in \mathcal{P}(R^{-1/2})}\|f\|_{\avg}^{(\frac{1}{13})p_0}.
\endeq

\subsubsection{The estimates for the transverse wave packets}

Let us prove \eqref{transversewall}. 
Recall that $B_k$ is a ball of radius $R^{1-\delta}$.
By the induction on the radius $R$ and Lemma \ref{getridofoi}, we obtain
\begin{equation} 
\begin{split}
\int_{B_k \cap W}
|\mathrm{Br}_{200d^2\alpha}Ef_{I,k,+}(x)|^{p_0} 
&\leq
CC_{\epsilon}^{p_0}R^{C\dt}R^{\dt (\sum_j\log{(K_j^{\epsilon}\alpha_j))p_0}}R^{(1-\delta)\epsilon p_0}
\\& \times
\|f_{I,k,+}\|_{L^2}^{3+\epsilon p_0}
	\max_{\theta \in \mathcal{P}(R^{-1/2})}\|f\|_{\avg}^{0.25-\epsilon p_0}.
	\end{split}
\end{equation}
By summing over $B_k \subset B_R$ and by embedding $l^{3+ \epsilon p_0} \hookrightarrow l^2$, we obtain
\beq\label{12.1}
\begin{split}
&\sum_k\int_{B_k \cap W}
|\mathrm{Br}_{200d^2\alpha}Ef_{I,k,+}(x)|^{p_0} 
\\&\leq
C
C_{\epsilon}^{p_0}R^{C\dt}R^{p_0\dt (\sum_j\log{(K_j^{\epsilon}\alpha_j))}}R^{(1-\delta)\epsilon p_0}
\Big(\sum_{k,\tau} \|f_{\tau,k,+}\|_{L^2}^2 \Big)^{\frac{3+\epsilon p_0}{2}}
	\max_{\theta \in \mathcal{P}(R^{-1/2})}\|f\|_{\avg}^{\frac14-\epsilon p_0}.
	\end{split}
\endeq
We apply Lemma \ref{Translemma} with Lemma \ref{changesumfitof} and obtain
\beq\label{12.2}
\sum_k \int |f_{\tau,k,+}|^2 \lesssim R^{C\dd}\int |f_{\tau}|^2 +R^{-100}\|f_{\tau}\|_{2}^2.
\endeq
By combining \eqref{12.1} and \eqref{12.2}, we obtain
\begin{equation}
\begin{split}
&\sum_k\int_{B_k \cap W}
|\mathrm{Br}_{200d^2\alpha}Ef_{I,k,+}(x)|^{p_0} 
\\&\lesssim
C_{\epsilon}^{p_0}R^{C\dd}R^{C\dt}
R^{p_0\dt (\sum_j\log{(K_j^{\epsilon}\alpha_j))}}R^{(1-\delta)\epsilon p_0}
 \|f\|_{L^2}^{3+\epsilon p_0}
	\max_{\theta \in \mathcal{P}(R^{-1/2})}\|f\|_{\avg}^{\frac14-\epsilon p_0}.
	\end{split}
\end{equation}
Note that
\begin{equation}
R^{C\dd}
R^{C\dt}R^{-\delta\epsilon p_0} \ll R^{-\delta \epsilon}.
\end{equation}
This completes the proof of \eqref{transversewall}. 

\subsubsection{The estimates for the bilinear operator}

Let us prove \eqref{tangentialwall}.
By replacing the summation over $k$ by the supremum, it suffices to show that
\beq\label{Bilinearestimate}
\int_{B_k \cap W}\mathrm{Bil}(Ef_{k,-})^{p_0}
\leq C(K)R^{C\delta}
\|f\|_{L^2}^{(\frac{12}{13})p_0}
	\max_{\theta \in \mathcal{P}(R^{-1/2})}\|f\|_{\avg}^{(\frac{1}{13})p_0}
\endeq
for every $k$.
We cover $B_k \cap W$ with balls $Q$ of radius $R^{1/2}$. For each $Q$, let
\begin{equation}
\mathbb{T}_{k,-,Q}:=\{ T \in   \mathbb{T}_{k,-}: T \cap Q \neq \emptyset\}.
\end{equation}
On a ball $Q$, we obtain
\begin{equation}
Ef_{\tau,k,-}(x)= \sum_{T \in \mathbb{T}_{k,-,Q} }Ef_{\tau,T}(x) + O(R^{-100}\|f\|_2).
\end{equation}

Since a wall could be curved, the tangent wave packets are not necessarily coplanar. However, the tangent wave packets intersecting a ball $Q$ are always coplanar in the sense that they are contained in some $R^{1/2+\delta}$-neighborhood of a hyperplane by the following reason. Since $Q \cap W$ is non-empty, we can choose a point $z \in \mathcal{Z}(P)$ in the $R^{1/2+\delta}$-neighborhood of $Q$. Since the non-singular points are dense in $\mathcal{Z}(P)$ as $P$ is a product of non-singular polynomials, we may assume that $z$ is a non-singular point. By definition, for every $T \in \T_{k,-,Q}$ and $z \in 10T \cap 2B_k \cap \mathcal{Z}(P)$, the angle between $v(T)$ and $\mathcal{T}_z(\mathcal{Z}(P))$ is $O(R^{-1/2+O(\delta)})$. Hence, all the elements of $\T_{k,-,Q}$ are contained in the $R^{1/2+O(\delta)}$-neighborhood of the hyperplane $\mathcal{T}_z(\mathcal{Z}(P))$. 

Since the wave packets $T \in \T_{k,-,Q}$ are coplanar, the support of $f_{\tau,T}$ and $f_{\tau',T}$ must be clustered near a zero set of a polynomial. We have defined a good pair $(\tau,\tau')$ so that it enjoys the following property: Either the hypersurface on the zero set of the polynomial or the zero set of the polynomial itself is curved in such a way that we can perform $L^4$-argument either way. This idea will be clear in the proof of Proposition \ref{L4argument}.
\medskip

Let $\psi_Q$ be a smooth function such that $|\psi_Q(x)| \simeq 1$ on $x \in Q$ and $\mathcal{F}({\psi}_Q)$ is supported on $B_{cR^{-1/2}}(0)$ for some small number $c>0$. Here, $\mathcal{F}$ is the Fourier transform.
\begin{prop}
\label{L4argument}
Suppose that $(\tau,\tau')$ is a good pair. For any ball $Q$ of radius $R^{1/2}$ intersecting $B_k \cap W$,
\begin{equation}\label{0512e4.86}
\begin{split}
    \int_Q \Big|\sum_{T_1 \in \mathbb{T}_{k,-,Q} } &Ef_{\tau,T_1}\Big|^2 \Big|\sum_{T_2 \in \mathbb{T}_{k,-,Q} } Ef_{\tau',T_2}\Big|^2 
    \\&\lesssim C(K)
    R^{C\delta}
    \sum_{T_1 \in \mathbb{T}_{k,-,Q} }
    \sum_{T_2 \in \mathbb{T}_{k,-,Q} }
    \int_{\R^3}
    |\psi_Q|^4
    | Ef_{\tau,T_1}|^2 | Ef_{\tau',T_2}|^2.
\end{split}
\end{equation}
\end{prop}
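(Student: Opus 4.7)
My plan is to carry out a Córdoba–Fefferman style orthogonality argument on the Fourier side. Let $F_\tau:=\sum_{T\in\mathbb{T}_{k,-,Q}} Ef_{\tau,T}$ and define $F_{\tau'}$ similarly. Since $|\psi_Q|\simeq 1$ on $Q$, the left-hand side of \eqref{0512e4.86} is dominated by $\|\psi_Q^{2}\,F_\tau F_{\tau'}\|_{L^2(\R^3)}^2$, and by Plancherel this equals $\|\widehat{\psi_Q^{2}\,F_\tau F_{\tau'}}\|_2^2$. Expanding the square as a quadruple sum over $(T_1,T_2;T_1',T_2')$ and discarding the terms with disjoint Fourier support, the proof reduces to showing that, for each $\xi\in\R^3$, the number of pairs $(T_1,T_2)$ for which $\xi$ lies in the Fourier support of $H_{T_1,T_2}:=\psi_Q^{2}\,Ef_{\tau,T_1}\,Ef_{\tau',T_2}$ is at most $C(K)R^{O(\delta)}$.

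The Fourier support of $H_{T_1,T_2}$ lies in the $O(R^{-1/2})$-neighborhood of the sum surface $\Sigma_{\theta_1,\theta_2}:=\{(\xi_1+\xi_2,\eta_1+\eta_2,h(\xi_1,\eta_1)+h(\xi_2,\eta_2)):(\xi_i,\eta_i)\in 3\theta_i\}$, where $T_i\in\mathbb{T}(\theta_i)$. The coplanarity of the tangent wave packets in $\mathbb{T}_{k,-,Q}$, combined with the argument leading to \eqref{4.30}, confines both $\theta_1$ and $\theta_2$ to the $R^{-1/2+O(\delta)}$-neighborhood of the common one-dimensional variety $V:=T_{\zeta^*}^{-1}(\mathcal{Z}(P_{\zeta^*,\zeta^{**}}))$. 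Counting Fourier-support overlaps therefore reduces to controlling the multiplicity of the map $\Phi:V\times V\to\R^3$ defined by $\Phi(\xi_1,\xi_2)=(\xi_1+\xi_2,\eta_V(\xi_1)+\eta_V(\xi_2),H(\xi_1)+H(\xi_2))$, where $V$ is parametrized by $\xi\mapsto(\xi,\eta_V(\xi))$ and $H(\xi):=h_{\zeta^*}(\xi,\eta_V(\xi))$, up to an $O(R^{-1/2+O(\delta)})$ thickening that is responsible for the $R^{C\delta}$ loss.

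Fixing the first coordinate $\xi_1+\xi_2=a$ and substituting $\xi_2=a-\xi_1$, the remaining two coordinates become one-variable functions of $\xi_1$ whose derivatives at the reference pair $(\xi_1,\xi_2)=(0,\overline{\zeta^{**}_1})$ equal $\eta_V'(0)-\eta_V'(\overline{\zeta^{**}_1})$ and $-H'(\overline{\zeta^{**}_1})$ (using that $\eta_V(0)=0$ gives $H'(0)=\partial_1 h_{\zeta^*}(0,0)+\partial_2 h_{\zeta^*}(0,0)\eta_V'(0)=0$). The good-pair hypothesis $\overline{\zeta^{**}}\notin \mathbb{B}_1\cap\mathbb{B}_2$ is \emph{precisely} the statement that at least one of these two derivatives exceeds $K_1^{-1}$ in absolute value. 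Combined with the uniform $O(1)$ bounds on $\eta_V^{(j)}$ coming from \eqref{dervativeofeta} and with $h_{\zeta^*}$ being a polynomial of degree at most $d$, a standard implicit-function-theorem argument together with the polynomial degree bound then shows that $\Phi$ has preimages of cardinality $O(C(K))$. Inserting this multiplicity bound into the Plancherel expansion yields the desired inequality.

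The main technical obstacle, in my view, is handling the two branches of the good-pair hypothesis uniformly: when $\mathbb{B}_1$ fails the transversality enters through the second coordinate of $\Phi$ (because the curve $V$ has quantifiably non-linear shape), whereas when $\mathbb{B}_2$ fails it enters through the third coordinate (the lift $H$ varies along the curve). One must either split into two cases and, in the latter, perform a change of variables making the $H$-variation into the dominant component, or else formulate a single statement extracting non-degeneracy of $d\Phi$ restricted to the level set $\{\xi_1+\xi_2=a\}$ directly from the disjunction ``$|\eta_V'(\overline{\zeta^{**}_1})-\eta_V'(0)|>K_1^{-1}$ or $|H'(\overline{\zeta^{**}_1})|>K_1^{-1}$''. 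A secondary delicate point is verifying that the Jacobian lower bound, guaranteed by the good-pair condition only at one specific reference point $(\zeta^*,\zeta^{**})\in 2\tau\times 2\tau'$, propagates to a global multiplicity bound on all of $\tau\times\tau'$, which is where the polynomiality of $h$ and the smallness of $K^{-1}$ relative to $K_1^{-1}$ are used.
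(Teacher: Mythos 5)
Your proposal is correct and follows essentially the same C\'ordoba--Fefferman $L^4$ argument as the paper: after Plancherel, both reduce to a bounded-multiplicity count for overlaps of Fourier supports parametrized by $\Phi(\xi_1,\xi_2)=(\xi_1+\xi_2,\eta_V(\xi_1)+\eta_V(\xi_2),H(\xi_1)+H(\xi_2))$, and both extract the needed transversality from the dichotomy $\overline{\zeta'}\notin\mathbb{B}_1$ or $\overline{\zeta'}\notin\mathbb{B}_2$. One small imprecision worth flagging: the good-pair hypothesis on $(\tau,\tau')$ asserts only that \emph{some} $(\mu,\mu')\in 2\tau\times 2\tau'$ is a good pair of points, not that the specific $(\zeta^*,\zeta^{**})$ coming from the wall tangency is; the paper therefore spends \eqref{4.65}--\eqref{4.67} transferring the derivative separation from $(\mu,\mu')$ to $(\zeta,\zeta')$, which is precisely the ``secondary delicate point'' you identify, just with the roles of the two reference pairs swapped.
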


\begin{proof}[Proof of Proposition \ref{L4argument}]
To apply the Fourier transform, we first replace the strict cut-off by a smooth cut-off and bound the left hand side of \eqref{0512e4.86} by   
\beq\label{prop4.7step1}
\begin{split}
        \int_{\R^3} |\psi_Q|^4 \Big| \sum_{T_1 \in \mathbb{T}_{k,-,Q} } Ef_{\tau,T_1}\Big|^2 \Big| \sum_{T_2 \in \mathbb{T}_{k,-,Q} } Ef_{\tau',T_2}\Big|^2.
\end{split}
\endeq
By the discussion above Proposition \ref{L4argument} and the discussion at the beginning of Case 2.2 of the proof of Lemma \ref{bourgainguth}, there exist points $\zeta \in \tau$ and $\zeta' \in \tau'$ such that  supports of $f_{\tau,T_1}$ and $f_{\tau',T_2}$ are contained in the set
\begin{equation}
 T_{\zeta}^{-1}(\mathcal{N}_{R^{-1/2+O(\delta)}}(\mathcal{Z}(P_{\zeta,\zeta'}))).   
\end{equation}
Let us put $\overline{\zeta'}:=(\overline{\zeta'_1},\overline{\zeta'_2}):=T_{\zeta}(\zeta')$. Without loss of generality, we may assume that $|\overline{\zeta'_1}| \geq |\overline{\zeta'_2}|$. By a change of variables we obtain
\begin{equation}
\begin{split}
|Ef_{\tau,T_1}(x)|&=
\Big|
\int_{[-CK^{-1},CK^{-1}]^{2}} f_{\tau,T_1}(T_{\zeta}^{-1}(\xi,\eta))e\big((Ax) \cdot (\xi,\eta,h_{\zeta}(\xi,\eta)\big)\,d\xi d\eta \Big|
\\&=|{E}_{S_2}g_{T_{\zeta}(\tau),T_1}(Ax)|
\end{split}
\end{equation}
for some linear transformation $A$ with determinant $\simeq 1$. Here $S_2$ is the surface corresponding to the polynomial $h_{\zeta}(\xi,\eta)$ and \begin{equation}
g_{T_{\zeta}(\tau),T_1}:=f_{\tau,T_1} \circ T_{\zeta}^{-1}.  
\end{equation}
We apply the same change of variables given by the matrix $T_{\zeta}^{-1}$ to $Ef_{\tau',T_2}$ and we define
\begin{equation}
    g_{T_{\zeta}(\tau'),T_1}:=f_{\tau',T_2} \circ T_{\zeta}^{-1}.
\end{equation}
Put $\phi_{Q}(x):=\psi_Q(A^{-1}x)$. After this change of variables, we obtain
\begin{equation}
\begin{split}
\eqref{prop4.7step1} \lesssim
 \int_{\R^3} |\phi_Q|^4 \Big| \sum_{T_1 \in \mathbb{T}_{k,-,Q} } {E}_{S_2}g_{T_{\zeta}(\tau),T_1}\Big|^2 \Big| \sum_{T_2 \in \mathbb{T}_{k,-,Q} } {E}_{S_2}g_{T_{\zeta}(\tau'),T_2}\Big|^2.
\end{split}
\end{equation}
We expand the $L^2$-norm and obtain
\begin{equation}
\begin{split} 
    \sum_{T_1,T_1' \in \mathbb{T}_{k,-,Q} }
    \sum_{T_2,T_2' \in \mathbb{T}_{k,-,Q} }
    \int_{\R^3}
    \big(
    &(\phi_Q
    {E}_{S_2}g_{T_{\zeta}(\tau),T_1})
    (\phi_Q{E}_{S_2}g_{T_{\zeta}(\tau'),T_2})\big)
    \\& \times
    \big(
    \overline{(\phi_Q {E}_{S_2}g_{T_{\zeta}(\tau),T_1'})(\phi_Q{E}_{S_2}g_{T_{\zeta}(\tau'),T_2'}})\big).
    \end{split}
\end{equation}
We apply Plancherel's theorem and the integration becomes
\begin{equation}
\begin{split}
    \int_{\R^3}
    \big(
    \mathcal{F}({\phi_Q
    {E}_{S_2}g_{T_{\zeta}(\tau),T_1}})
    &*
    \mathcal{F}(\phi_Q{E}_{S_2}g_{T_{\zeta}(\tau'),T_2})
     \big)
     \\& \times
    \big(
    \overline{\mathcal{F}(\phi_Q {E}_{S_2}g_{T_{\zeta}(\tau),T_1'})*\mathcal{F}(\phi_Q{E}_{S_2}g_{T_{\zeta}(\tau'),T_2'}})\big).
    \end{split} 
\end{equation}
We will perform an $L^4$ argument.
Denote by $w_1(T_i)$ and $w_1(T_i')$ the projections to the $\xi$-axis of the centers of the supports of $g_{T_{\zeta}(\tau),T_i}$ and $g_{T_{\zeta}(\tau'),T_i'}$.
The integration vanishes unless
\begin{align}\label{mid}
w_1(T_1)+w_1(T_2)&=w_1(T_1')+w_1(T_2') +O(R^{-1/2+O(\delta)}),
\\\label{4.97}
\eta_{\zeta,\zeta'}(w_1(T_1))+\eta_{\zeta,\zeta'}(w_1(T_2))&=
\eta_{\zeta,\zeta'}(w_1(T_1'))+\eta_{\zeta,\zeta'}(w_1(T_2'))+O(R^{-1/2+O(\delta)}),
\\
H_{\zeta,\zeta'}(w_1(T_1))+H_{\zeta,\zeta'}(w_1(T_2))&=H_{\zeta,\zeta'}(w_1(T_1'))+H_{\zeta,\zeta'}(w_1(T_2'))+O(R^{-1/2+O(\delta)}).
\end{align}
We claim that either
\begin{equation}\label{4.63}
    |\eta_{\zeta,\zeta'}'(\overline{\zeta_1'})-\eta_{\zeta,\zeta'}'(0)| > (2K_1)^{-1} 
\end{equation}
or
\begin{equation}\label{4.64}
    |H'_{\zeta,\zeta'}(\overline{\zeta'})|>(2K_1)^{-1}.
\end{equation}
Recall that $(\tau,\tau')$ is a good pair. Thus, there exist $\mu \in 2\tau$ and $\mu' \in 2\tau'$ such that $(\mu,\mu')$ is a good pair. Let us take $\overline{\mu'}:=(\overline{\mu_{1}'},\overline{\mu_{2}'}):=T_{\mu}(\mu')$. By the implicit function theorem, the set $\mathcal{Z}(P_{\mu,\mu'})$ can be written as a graph of some function ${\eta}_{\mu,\mu'}$.
Since $(\mu,\mu')$ is a good pair, the claim follows from
\begin{align}\label{4.65}
        &|\eta_{\zeta,\zeta'}'(\overline{\zeta_1'})-{\eta}_{\mu,\mu'}'(\overline{\mu_1'})| \lesssim K_{d+1}^2K^{-1},
        \\&\label{4.66}
        |\eta_{\zeta,\zeta'}'(0)-{\eta}_{\mu,\mu'}'(0)| \lesssim K_{d+1}K^{-1},
        \\&\label{4.67}
        |H'_{\zeta,\zeta'}(\overline{\zeta'})-H'_{\mu,\mu'}(\overline{\mu'})| \lesssim K_{d+1}K^{-1},
\end{align}
{provided that $K$ is sufficiently large, compared to $K_1$.} 
Before we enter the proof, let us mention that by continuity and by Taylor's expansion, we have 
\begin{equation}
\|T_{\zeta}-T_{\mu}\| \lesssim K^{-1} \text{ and } \|h_{\zeta}-h_{\mu}\| \lesssim K^{-1}.
\end{equation}

Let us first prove \eqref{4.65}. By \eqref{derivativeofeta}, 
$|\eta_{\zeta,\zeta'}'(\overline{\zeta_1'})-{\eta}_{\mu,\mu'}'(\overline{\mu_1'})|$ is equal to
\begin{equation}
    \Big|
\frac{\partial_{12}h_{\zeta}(\overline{\zeta'})\partial_1h_{\zeta}(\overline{\zeta'})-\partial_{11}h_{\zeta}(\overline{\zeta'})\partial_2h_{\zeta}(\overline{\zeta'}) }{\partial_{12}h_{\zeta}(\overline{\zeta'})\partial_2h_{\zeta}(\overline{\zeta'})-\partial_{22}h_{\zeta}(\overline{\zeta'})\partial_1h_{\zeta}(\overline{\zeta'})}
-
\frac{\partial_{12}h_{\mu}(\overline{\mu'})\partial_1h_{\mu}(\overline{\mu'})-\partial_{11}h_{\mu}(\overline{\mu'})\partial_2h_{\mu}(\overline{\mu'}) }{\partial_{12}h_{\mu}(\overline{\mu'})\partial_2h_{\mu}(\overline{\mu'})-\partial_{22}h_{\mu}(\overline{\mu'})\partial_1h_{\mu}(\overline{\mu'})}
\Big|.
\end{equation}
Let us write the above term as $\frac{A}{B}-\frac{C}{D}=\frac{A(D-B)+(A-C)B}{BD}$.
Since $|\partial_{12}h_{\zeta}(\overline{\zeta'})|\simeq |\partial_{12}h_{\mu}(\overline{\mu'})| \simeq 1$, by \eqref{secondderivativelowerbound}, we obtain $|BD|^{-1} \lesssim K_{d+1}^2$.
It suffices to prove that $|B-D| \lesssim K^{-1}$ and $|A-C| \lesssim K^{-1}$. This easily follows by the inequalities $\|h_{\zeta}-h_{\mu}\| \lesssim K^{-1}$ and $|\overline{\zeta'}-\overline{\mu'}|\leq 2K^{-1}$. This finishes the proof of \eqref{4.65}.

Let us next prove \eqref{4.66}. By the definition of $\eta_{\zeta,\zeta'}$ and ${\eta}_{\mu,\mu'}$,
\begin{equation}
\begin{split}
    |\eta_{\zeta,\zeta'}'(0)-{\eta}_{\mu,\mu'}'(0)|
    &=\Big|\frac{\partial_1h_{\zeta}(\overline{\zeta'}) }{\partial_2h_{\zeta}(\overline{\zeta'})}-
    \frac{\partial_1h_{\mu}(\overline{\mu'}) }{\partial_2h_{\mu}(\overline{\mu'})}
    \Big|
    \\&=
    \Big|\frac{\partial_1h_{\zeta}(\overline{\zeta'})\partial_2h_{\mu}(\overline{\mu'})-\partial_1h_{\mu}(\overline{\mu'})\partial_2h_{\zeta}(\overline{\zeta'}) }{\partial_2h_{\zeta}(\overline{\zeta'})\partial_2h_{\mu}(\overline{\mu'}) }
    \Big|.
\end{split}
\end{equation}
This is equal to
\begin{equation}
    \Big|\frac{\partial_1h_{\zeta}(\overline{\zeta'})\big( \partial_2h_{\mu}(\overline{\mu'})-\partial_2h_{\zeta}(\overline{\zeta'}) \big)+\big(\partial_1h_{\zeta}(\overline{\zeta'})  -\partial_1h_{\mu}(\overline{\mu'})\big)\partial_2h_{\zeta}(\overline{\zeta'}) }{\partial_2h_{\zeta}(\overline{\zeta'})\partial_2h_{\mu}(\overline{\mu'}) }
    \Big|.
\end{equation}
By \eqref{secondderivativelowerbound}, it is bounded by
\begin{equation}
    \lesssim K_{d+1}\Big(|\partial_2 h_{\mu}(\overline{\mu'})-\partial_2 h_{\zeta}(\overline{\zeta'})|+|\partial_1 h_{\zeta}(\overline{\zeta'})-\partial_1 h_{\mu}(\overline{\mu'})| \Big).
\end{equation}
Since $\|h_{\zeta}-h_{\mu}\| \lesssim K^{-1}$ and $|\overline{\zeta'}-\overline{\mu'}| \leq 2K^{-1}$, it is further bounded by $K_{d+1}K^{-1}$ and finishes the proof of \eqref{4.66}.

Lastly, let us prove \eqref{4.67}. By the chain rule,
\begin{equation}
\begin{split}
    &|H_{\zeta,\zeta'}'(\overline{\zeta'})-H_{\mu,\mu'}'(\overline{\mu'})|
    \\&=
    |\partial_1h_{\zeta}(\overline{\zeta'})+\partial_2h_{\zeta}(\overline{\zeta'})\eta_{\zeta,\zeta'}'(\overline{\zeta_1'})-\partial_1h_{\mu}(\overline{\mu'})-\partial_2h_{\mu}(\overline{\mu'}){\eta}_{\mu,\mu'}'(\overline{\mu_1'})|
    \\&
    \leq |\partial_1h_{\zeta}(\overline{\zeta'})-\partial_1h_{\mu}(\overline{\mu'})|
    +|\partial_2h_{\zeta}(\overline{\zeta'}){\eta}_{\zeta,\zeta'}'(\overline{\zeta_1'})-\partial_2h_{\mu}(\overline{\mu'}){\eta}_{\mu,\mu'}'(\overline{\mu_1'})|.
\end{split}
\end{equation}
It is further bounded by $K_{d+1}K^{-1}$ by the inequalities $\|h_{\zeta}-h_{\mu}\| \lesssim K^{-1}$, and $|\overline{\zeta'}-\overline{\mu'}| \leq 2K^{-1}$, and \eqref{4.65}. This finishes the proof of the claim.
\\

By the claim, either \eqref{4.63} or \eqref{4.64} holds true.
Since the side length of $\tau$ and $\tau'$ is $K^{-1}$, we obtain that
\begin{align}
    &\eta'_{\zeta,\zeta'}(\xi)=\eta'_{\zeta,\zeta'}(0)+O(K^{-1}) \text{  for every  } (\xi,\eta_{\zeta,\zeta'}(\xi)) \in T_{\zeta}(\tau),
    \\&
    \eta'_{{\zeta,\zeta'}}(\xi')=\eta'_{\zeta,\zeta'}(\overline{\zeta_1'})+O(K^{-1}) \text{  for every  } (\xi',\eta_{{\zeta,\zeta'}}(\xi')) \in T_{\zeta}(\tau').
\end{align}
Hence, we obtain either
\beq\label{l4case1}
|\eta'_{{\zeta,\zeta'}}(w_1(T_1))-\eta'_{\zeta,\zeta'}(w_1(T_2))| > \frac{1}{4K_1} \text{  for every  } T_i \in \T_{k,-,Q},
\endeq
or
\beq\label{l4case2}
|H_{\zeta,\zeta'}'(w_1(T_1))-H_{\zeta,\zeta'}'(w_1(T_2))| > \frac{1}{4K_1} \text{  for every  } T_i \in \T_{k,-,Q}.
\endeq
We consider only the case that \eqref{l4case1} holds true. The other case \eqref{l4case2} can be dealt with in exactly the same way.
By the relation \eqref{mid}, we may assume that $w_1(T_1)<w_1(T_1')<w_1(T_2')<w_1(T_2)$. We take two intervals $I=[w_1(T_1),w_1(T_1')]$ and $I'=[w_1(T_2'),w_1(T_2)]$. Note that $|I|=|I'|+O(R^{-1/2+O(\delta)})$. 
By \eqref{l4case1}, we obtain 
\begin{equation}
    |I|+|I'| \lesssim C(K)\Big| \int_I \eta_{\zeta,\zeta'}' - \int_{I'}\eta_{\zeta,\zeta'}'  \Big| +R^{-1/2+O(\delta)},
\end{equation}
and by the fundamental theorem of calculus, it is further bounded by
\begin{equation}
\begin{split}
C(K)
\Big|
\eta_{\zeta,\zeta'}(w_1(T_1'))-\eta_{\zeta,\zeta'}(w_1(T_1))-\eta_{\zeta,\zeta'}(w_1(T_2'))+\eta_{\zeta,\zeta'}(w_1(T_2))
\Big|+R^{-1/2+O(\delta)}.
\end{split}
\end{equation}
By the relation \eqref{4.97}, this is bounded by $C(K)R^{-1/2+O(\delta)}$ and we obtain $|w_1(T_i)-w_1(T_i')| \lesssim R^{-1/2+O(\delta)}$. Therefore,
\begin{equation}
\begin{split}
\eqref{prop4.7step1} 
 \lesssim
R^{C\delta}
 \sum_{T_1 \in \mathbb{T}_{k,-,Q} }
\sum_{T_2 \in \mathbb{T}_{k,-,Q} }
 \int_{\R^3} |\phi_Q|^4  \big| {E}_{S_2}g_{T_{\zeta}(\tau),T_1}\big|^2 \big|  {E}_{S_2}g_{T_{\zeta}(\tau'),T_2}\big|^2.
\end{split}
\end{equation}
We change back to the original variables and it gives the desired result.
\end{proof}

\begin{lem}\label{L4argument2}
Suppose that $(\tau,\tau')$ is a good pair.
For any ball $Q$ of radius $R^{1/2}$ intersecting $B_k \cap W$,
\begin{equation}
\begin{split}
    \int_{Q} |Ef_{\tau,k,-}|^2&|Ef_{\tau',k,-}|^2 \lesssim
    C(K)
    R^{O(\delta)}R^{-1/2}
    \\& \Big( \sum_{T_1 \in \mathbb{T}_{k,-,Q} }\|f_{\tau,T_1}\|_{2}^2 \Big)
    \Big( \sum_{T_2 \in \mathbb{T}_{k,-,Q}}\|f_{\tau',T_2}\|_{2}^2 \Big).
\end{split}
\end{equation}
\end{lem}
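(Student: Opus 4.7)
The plan is to deduce this lemma from Proposition \ref{L4argument} combined with the trivial $L^\infty$-bound on a single wave packet and a volume estimate for $|\psi_Q|^4$. The only work beyond invoking Proposition \ref{L4argument} is bookkeeping.

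\emph{Step 1: Reduce to wave packets in $\T_{k,-,Q}$.} On the ball $Q$ of radius $R^{1/2}$, any tube $T\in \T_{k,-}\setminus \T_{k,-,Q}$ is disjoint from $Q$, so by item $(2)$ of Proposition \ref{wavepacketdecomposition} we have $|Ef_{\tau,T}(x)| \leq R^{-1000}\|f\|_{L^2}$ on $Q$. Hence on $Q$,
\begin{equation}
Ef_{\tau,k,-}(x) = \sum_{T_1\in \T_{k,-,Q}} Ef_{\tau,T_1}(x) + O(R^{-990}\|f\|_{L^2}),
\end{equation}
and the analogous identity holds for $Ef_{\tau',k,-}$. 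The $O(R^{-990}\|f\|_{L^2})$ errors are absorbed into the main term at the end (assuming the main term is not itself negligibly small, in which case the lemma is trivial).

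\emph{Step 2: Apply Proposition \ref{L4argument}.} Since $(\tau,\tau')$ is a good pair and $Q$ intersects $B_k\cap W$, Proposition \ref{L4argument} yields
\begin{equation}
\int_Q |Ef_{\tau,k,-}|^2 |Ef_{\tau',k,-}|^2 \lesssim C(K) R^{C\delta} \sum_{T_1,T_2 \in \T_{k,-,Q}} \int_{\R^3} |\psi_Q|^4 |Ef_{\tau,T_1}|^2 |Ef_{\tau',T_2}|^2,
\end{equation}
modulo the negligible errors from Step 1.

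\emph{Step 3: Pointwise $L^\infty$-bound and volume of $Q$.} Since $\mathrm{supp}(f_{\tau,T_1}) \subset 3\theta$ for some $\theta$ of sidelength $R^{-1/2}$, Cauchy--Schwarz on the $R^{-1}$-sized support gives
\begin{equation}
\|Ef_{\tau,T_1}\|_{L^\infty(\R^3)} \leq \|f_{\tau,T_1}\|_{L^1} \lesssim R^{-1/2} \|f_{\tau,T_1}\|_{L^2},
\end{equation}
and similarly for $Ef_{\tau',T_2}$. Since $\psi_Q$ has Fourier support in $B_{cR^{-1/2}}(0)$ and $|\psi_Q|\simeq 1$ on $Q$, it decays off $Q$ and satisfies $\int_{\R^3} |\psi_Q|^4 \lesssim |Q| \sim R^{3/2}$. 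Therefore each summand obeys
\begin{equation}
\int_{\R^3} |\psi_Q|^4 |Ef_{\tau,T_1}|^2 |Ef_{\tau',T_2}|^2 \lesssim R^{-1/2} \|f_{\tau,T_1}\|_{L^2}^2 \|f_{\tau',T_2}\|_{L^2}^2.
\end{equation}
Summing over $T_1, T_2 \in \T_{k,-,Q}$ factors into a product and yields the claimed estimate. The proof contains no genuine obstacle: the transversality input from the good pair has already been consumed inside Proposition \ref{L4argument}, and what remains is simply the elementary observation that tangent wave packets are bounded in $L^\infty$ by $R^{-1/2}\|f_{\tau,T}\|_{L^2}$, which, when combined with $|Q|\sim R^{3/2}$, produces exactly the $R^{-1/2}$ gain.
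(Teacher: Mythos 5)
Your proof is correct, and it takes a genuinely different route to the $R^{-1/2}$ gain than the paper does. The paper argues in frequency space: after replacing $|\psi_Q|^4$ by $|\psi_{B_R}|^4$, it applies Plancherel, observes that the convolution $\mathcal{F}(\psi_{B_R}Ef_{\tau,T_1})*\mathcal{F}(\psi_{B_R}Ef_{\tau',T_2})$ has translates of Fourier supports overlapping in a set of measure $\lesssim K_L^{O(1)}R^{-5/2}$ \emph{because} $\tau$ and $\tau'$ are $\gtrsim K_L^{-1}$-separated, and then interpolates the resulting $L^\infty$ and $L^1$ convolution bounds. This is a Cordoba-style argument where the transversality of the two caps is used a second time. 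Your argument stays in physical space: a single wave packet satisfies the trivial bound $\|Ef_{\tau,T}\|_\infty \leq \|f_{\tau,T}\|_{L^1} \lesssim R^{-1/2}\|f_{\tau,T}\|_{L^2}$ from the area $\sim R^{-1}$ of $\mathrm{supp}(f_{\tau,T})$, and $\int|\psi_Q|^4 \lesssim |Q| \sim R^{3/2}$; multiplying gives $R^{-1} \cdot R^{-1} \cdot R^{3/2} = R^{-1/2}$. The notable observation your approach makes explicit is that, once Proposition \ref{L4argument} has been invoked to diagonalize the bilinear square, no further transversality is needed in this step: the $R^{-1/2}$ here comes entirely from the $R^{1/2}$-localization to $Q$ and would hold for any pair $(T_1,T_2)$, parallel or not. (Separation of $\tau$ and $\tau'$ is still essential, of course, both inside Proposition \ref{L4argument} and in the later count $\sum_{Q} \chi_{T_1}(Q)\chi_{T_2}(Q)\lesssim R^{O(\delta)}$ used to sum over balls $Q$.) Your route is shorter and avoids a small imprecision in the paper (the $L^\infty$--$L^1$ interpolation there produces $\|f_{\tau,T}\|_\infty\|f_{\tau,T}\|_1$ rather than $\|f_{\tau,T}\|_2^2$; these are comparable only because wave-packet cutoffs are essentially constant in modulus on their supports, a fact the paper does not pause to record).
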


\begin{proof}
By Lemma \ref{L4argument}, we obtain
\begin{equation}
\begin{split}
    \int_Q \Big|\sum_{T_1 \in \mathbb{T}_{k,-,Q} } &Ef_{\tau,T_1}\Big|^2 \Big|\sum_{T_2 \in \mathbb{T}_{k,-,Q} } Ef_{\tau',T_2}\Big|^2 
    \\&\lesssim 
    R^{C\delta}
    \sum_{T_1 \in \mathbb{T}_{k,-,Q} }
    \sum_{T_2 \in \mathbb{T}_{k,-,Q} }
    \int_{\R^3}
    |\psi_Q|^4
    | Ef_{\tau,T_1}|^2 | Ef_{\tau',T_2}|^2.
\end{split}
\end{equation}
We take a smooth function $\psi_{B_R}$ such that 
$|\psi_{B_R}(x)| \simeq 1$ on $x \in B_R$ and $\mathcal{F}({\psi}_{B_R})$ is supported on $B_{cR^{-1}}(0)$ for some small number $c>0$. It suffices to show that
\begin{equation}
\int_{\R^3}|\psi_{B_R} Ef_{\tau,T_1}|^2 |\psi_{B_R} Ef_{\tau',T_2}|^2 \lesssim C(K)R^{-1/2}\|f_{\tau,T_1}\|_{2}^2\|f_{\tau',T_2}\|_{2}^2.
\end{equation}
By Plancherel's theorem,
it amounts to showing that
\begin{equation}
\int_{\R^3}
    |\mathcal{F}(\psi_{B_R} Ef_{\tau,T_1}) *
    \mathcal{F}(\psi_{B_R}Ef_{\tau',T_2})|^2 \lesssim
    C(K)
    R^{-1/2}\|f_{\tau,T_1}\|_2^2 \|f_{\tau',T_2}\|_2^2.
\end{equation}
Since the normal vectors of $T_1$ and $T_2$ are separated by $\gtrsim 1/K_L$ by the separation condition on $\tau_1$ and $\tau_2$, the measure of the intersection of translations of the support of $\mathcal{F}(\psi_{B_R} Ef_{\tau,T_1})$ and $\mathcal{F}(\psi_{B_R} Ef_{\tau',T_2})$ is $\lesssim K_L^{O(1)}R^{-5/2}$. Hence, we obtain two estimates
\begin{equation}
\begin{split}
    &\|\mathcal{F}(\psi_{B_R} Ef_{\tau,T_1})*\mathcal{F}(\psi_{B_R} Ef_{\tau',T_2})\|_{\infty} \lesssim K_L^{O(1)}R^{-1/2}\|f_{\tau,T_1} \|_{\infty}\|f_{\tau',T_2}\|_{\infty},
    \\&
    \|\mathcal{F}(\psi_{B_R} Ef_{\tau,T_1})* \mathcal{F}(\psi_{B_R} Ef_{\tau',T_2})\|_1 \lesssim \|f_{\tau,T_1} \|_{1}\|f_{\tau',T_2}\|_{1},
\end{split}
\end{equation}
and interpolating these two inequalities gives the desired inequality.
\end{proof}

We are ready to prove \eqref{Bilinearestimate}.
Let $(\tau_1,\tau_2)$ be a good pair.
We decompose $B_k\cap W$ into smaller balls $Q$ of raidus $R^{1/2}$ and obtain
\begin{equation}
\int_{B_k \cap W}|Ef_{\tau_1,k,-}|^2|Ef_{\tau_2,k,-}|^2=
\sum_{Q \subset B_k \cap W}
\int_{Q}|Ef_{\tau_1,k,-}|^2|Ef_{\tau_2,k,-}|^2.
\end{equation}
By applying Lemma \ref{L4argument2} to each ball $Q$, we obtain
\begin{equation}
\begin{split}
&\int_{B_k \cap W}|Ef_{\tau_1,k,-}|^2|Ef_{\tau_2,k,-}|^2
 \\&
 \lesssim
 C(K)
R^{C\delta}R^{-1/2}
\sum_{Q \subset B_k \cap W}
\Biggl(
\prod_{l=1}^{2}
\Big( \sum_{T_l \in \mathbb{T}_{k,-,Q} }\chi_{T_l}(Q)
\|f_{\tau_l,T_l}\|_{2}^2 \Big)\Biggr).
    \end{split}
\end{equation}
Here, $\chi_{T_l}(Q)=1$ if $T_l$ intersects $Q$ and $\chi_{T_l}(Q)=0$ otherwise. 
By interchanging the sums, it is further bounded by
\begin{equation}
\begin{split}
&
C(K)
R^{C\delta}R^{-1/2}
\sum_{T_1 \in \mathbb{T}_{k,-} }\sum_{T_2 \in \mathbb{T}_{k,-}}
\|f_{\tau_1,T_1}\|_2^2 \|f_{\tau_2,T_2}\|_2^2
\sum_{Q \subset B_k \cap W}
\chi_{T_1}(Q)\chi_{T_2}(Q)
\\&
\lesssim
C(K)
R^{2C\delta}R^{-1/2}
\sum_{T_1 \in \mathbb{T}_{k,-}}\sum_{T_2 \in \mathbb{T}_{k,-} }
\|f_{\tau_1,T_1}\|_2^2 \|f_{\tau_2,T_2}\|_2^2.
\end{split}
\end{equation}
By summing over all the good pairs, we obtain
\begin{equation}
\|\mathrm{Bil}(Ef_{k,-})\|_{L^4(B_k \cap W)} \lesssim
C(K)R^{2C\delta}R^{-1/8}\big(\sum_{\tau\in \mathcal{P}(K^{-1}) } \|f_{\tau,k,-}\|_2^2 \big)^{1/2}.
\end{equation}
By a trivial estimate, we obtain
\begin{equation}
\begin{split}
\|\mathrm{Bil}(Ef_{k,-})\|_{L^2(B_k \cap W)} &\lesssim 
C(K)  \Big(\sum_{\tau\in \mathcal{P}(K^{-1}) }\|Ef_{\tau,k,-}\|_{L^2(B_k)}^2 \Big)^{1/2}
\\& \lesssim C(K)R^{1/2}\big(\sum_{\tau\in \mathcal{P}(K^{-1}) } \|f_{\tau,k,-}\|_2^2 \big)^{1/2}.
\end{split}
\end{equation}
By interpolating these inequalities via H\"{o}lder's inequality, we obtain
\begin{equation}
\begin{split}
\|\mathrm{Bil}(Ef_{k,-})\|_{L^{p_0}(B_k \cap W)}
&\lesssim 
C(K)
\|\mathrm{Bil}(Ef_{k,-})\|_{L^{2}(B_k \cap W)}^{\frac{3}{13}}
\|\mathrm{Bil}(Ef_{k,-})\|_{L^{4}(B_k \cap W)}^{\frac{10}{13}}
\\&
\lesssim R^{C\delta}R^{\frac{1}{52}}\big(\sum_{\tau\in \mathcal{P}(K^{-1}) } \|f_{\tau,k,-}\|_2^2 \big)^{1/2}.
\end{split}
\end{equation}
By Lemma \ref{Translemma}, we obtain
\begin{equation}
\|f_{\tau,k,-}\|_2^2  \lesssim R^{-1/2+O(\delta)}\max_{\theta \in \mathcal{P}(R^{-1/2})} \|f_{\tau}\|_{L^2_{avg}(\theta)}^2.
\end{equation}
Hence, by combining this with Lemma \ref{getridofoi}, we obtain
\begin{equation}
\begin{split}
\|\mathrm{Bil}(Ef_{k,-})\|_{L^{p_0}(B_k \cap W)}
&
\lesssim C(K)R^{C\delta}
\big(\sum_{\tau \in \mathcal{P}(K^{-1}) } \|f_{\tau,k,-}\|_2^2 \big)^{\frac{6}{13}}
\max_{\theta \in \mathcal{P}(R^{-1/2})} \|f_{\tau}\|_{\avg}^{\frac{1}{13}}
\\&
\lesssim C(K)R^{C\delta}
 \|f\|_2^{\frac{12}{13}}
\max_{\theta \in \mathcal{P}(R^{-1/2})} \|f_{\tau}\|_{\avg}^{\frac{1}{13}}.
\end{split}
\end{equation}
This completes the proof of \eqref{Bilinearestimate}.

	\bibliography{reference}{}

\begin{thebibliography}{BMV20d}

\bibitem[BCT06]{MR2275834}
Jonathan Bennett, Anthony Carbery, and Terence Tao.
\newblock On the multilinear restriction and kakeya conjectures.
\newblock {\em Acta Math.}, 196(2):261--302, 2006.

\bibitem[BG11]{MR2860188}
Jean Bourgain and Larry Guth.
\newblock Bounds on oscillatory integral operators based on multilinear
  estimates.
\newblock {\em Geom. Funct. Anal.}, 21(6):1239--1295, 2011.

\bibitem[BMV19]{Buschenhenke2019OnFR}
Stefan Buschenhenke, Detlef M\"{u}ller, and Ana Vargas.
\newblock On fourier restriction for finite-type perturbations of the
  hyperboloid.
\newblock {\em arXiv:1902.05442}, 2019.

\bibitem[BMV20a]{MR3999679}
Stefan Buschenhenke, Detlef M\"{u}ller, and Ana Vargas.
\newblock A {F}ourier restriction theorem for a perturbed hyperbolic
  paraboloid.
\newblock {\em Proc. Lond. Math. Soc. (3)}, 120(1):124--154, 2020.

\bibitem[BMV20b]{Buschenhenke2020PartitionsOF}
Stefan Buschenhenke, Detlef M\"{u}ller, and Ana Vargas.
\newblock Partitions of flat one-variate functions and a fourier restriction
  theorem for related perturbations of the hyperbolic paraboloid.
\newblock {\em To appear in J. Geom. Anal.; arXiv:2002.08726}, 2020.

\bibitem[BMV20c]{BMV20d}
Stefan Buschenhenke, Detlef Müller, and Ana Vargas.
\newblock Fourier restriction for smooth hyperbolic 2-surfaces.
\newblock {\em arXiv preprint}, 2020.

\bibitem[BMV20d]{arXiv:2003.01619}
Stefan Buschenhenke, Detlef Müller, and Ana Vargas.
\newblock A fourier restriction theorem for a perturbed hyperbolic paraboloid:
  polynomial partitioning.
\newblock {\em arXiv:2003.01619}, 2020.

\bibitem[CL17]{MR3653943}
Chu-Hee Cho and Jungjin Lee.
\newblock Improved restriction estimate for hyperbolic surfaces in
  {$\mathbb{R}^3$}.
\newblock {\em J. Funct. Anal.}, 273(3):917--945, 2017.

\bibitem[GHI19]{guth2019}
Larry Guth, Jonathan Hickman, and Marina Iliopoulou.
\newblock Sharp estimates for oscillatory integral operators via polynomial
  partitioning.
\newblock {\em Acta Math.}, 223(2):251--376, 12 2019.

\bibitem[Gut16]{MR3454378}
Larry Guth.
\newblock A restriction estimate using polynomial partitioning.
\newblock {\em J. Amer. Math. Soc.}, 29(2):371--413, 2016.

\bibitem[GZ19]{arxiv:1902.03450}
Shaoming {Guo} and Pavel {Zorin-Kranich}.
\newblock Decoupling for certain quadratic surfaces of low co-dimensions.
\newblock {\em To appear in J. Lond. Math. Soc. (2).;arXiv:1902.03450}, 2019.

\bibitem[Kim17]{Kim2017SomeRO}
Jongchon Kim.
\newblock Some remarks on fourier restriction estimates.
\newblock {\em arXiv:1702.01231}, 2017.

\bibitem[Lee06]{MR2218987}
Sanghyuk Lee.
\newblock Bilinear restriction estimates for surfaces with curvatures of
  different signs.
\newblock {\em Trans. Amer. Math. Soc.}, 358(8):3511--3533, 2006.

\bibitem[Sha17]{MR3694293}
Bassam Shayya.
\newblock Weighted restriction estimates using polynomial partitioning.
\newblock {\em Proc. Lond. Math. Soc. (3)}, 115(3):545--598, 2017.

\bibitem[SS11]{MR2827930}
Elias~M. Stein and Rami Shakarchi.
\newblock {\em Functional analysis}, volume~4 of {\em Princeton Lectures in
  Analysis}.
\newblock Princeton University Press, Princeton, NJ, 2011.
\newblock Introduction to further topics in analysis.

\bibitem[Ste86]{MR864375}
E.~M. Stein.
\newblock Oscillatory integrals in {F}ourier analysis.
\newblock In {\em Beijing lectures in harmonic analysis ({B}eijing, 1984)},
  volume 112 of {\em Ann. of Math. Stud.}, pages 307--355. Princeton Univ.
  Press, Princeton, NJ, 1986.

\bibitem[Sto19a]{MR3892401}
Betsy Stovall.
\newblock Scale-invariant {F}ourier restriction to a hyperbolic surface.
\newblock {\em Anal. PDE}, 12(5):1215--1224, 2019.

\bibitem[Sto19b]{MR3967448}
Betsy Stovall.
\newblock Waves, spheres, and tubes: a selection of {F}ourier restriction
  problems, methods, and applications.
\newblock {\em Notices Amer. Math. Soc.}, 66(7):1013--1022, 2019.

\bibitem[Tao99]{MR1666558}
Terence Tao.
\newblock The {B}ochner-{R}iesz conjecture implies the restriction conjecture.
\newblock {\em Duke Math. J.}, 96(2):363--375, 1999.

\bibitem[Tao03]{MR2033842}
Terence Tao.
\newblock A sharp bilinear restrictions estimate for paraboloids.
\newblock {\em Geom. Funct. Anal.}, 13(6):1359--1384, 2003.

\bibitem[Tao04]{MR2087245}
Terence Tao.
\newblock Some recent progress on the restriction conjecture.
\newblock In {\em Fourier analysis and convexity}, Appl. Numer. Harmon. Anal.,
  pages 217--243. Birkh\"{a}user Boston, Boston, MA, 2004.

\bibitem[Tom75]{MR358216}
Peter~A. Tomas.
\newblock A restriction theorem for the {F}ourier transform.
\newblock {\em Bull. Amer. Math. Soc.}, 81:477--478, 1975.

\bibitem[TVV98]{MR1625056}
Terence Tao, Ana Vargas, and Luis Vega.
\newblock A bilinear approach to the restriction and {K}akeya conjectures.
\newblock {\em J. Amer. Math. Soc.}, 11(4):967--1000, 1998.

\bibitem[Var05]{MR2106972}
Ana Vargas.
\newblock Restriction theorems for a surface with negative curvature.
\newblock {\em Math. Z.}, 249(1):97--111, 2005.

\bibitem[Wan18]{Wang2018ARE}
Hong Wang.
\newblock A restriction estimate in $\mathbb{R}^3$ using brooms.
\newblock {\em arXiv:1802.04312}, 2018.

\bibitem[Wol01]{MR1836285}
Thomas Wolff.
\newblock A sharp bilinear cone restriction estimate.
\newblock {\em Ann. of Math. (2)}, 153(3):661--698, 2001.

\end{thebibliography}
	\bibliographystyle{alpha}

\medskip

\medskip

\noindent Department of Mathematics, University of Wisconsin-Madison and the Institute for Advanced Study\\
\emph{Email address}: 
shaomingguo@math.wisc.edu
\\

\noindent Department of Mathematics, University of Wisconsin-Madison\\
\emph{Email address}: coh28@wisc.edu

\end{document}